\documentclass[14pt]{amsart}

\usepackage{cases}
\usepackage{amsmath}
\usepackage{amsfonts}

\usepackage{bm}

\usepackage{amsfonts,amsmath,amssymb,amscd,bbm,amsthm,mathrsfs,dsfont}
\usepackage{mathrsfs}
\usepackage{pb-diagram}
\usepackage{amssymb}
\usepackage{xypic}


\newtheorem{Theorem}{Theorem}[section]
\newtheorem{Lemma}[Theorem]{Lemma}

\newtheorem{Definition}[Theorem]{Definition}
\newtheorem{Corollary}[Theorem]{Corollary}
\newtheorem{Proposition}[Theorem]{Proposition}
\newtheorem{Example}[Theorem]{Example}
\newtheorem{Remark}[Theorem]{Remark}
\newtheorem{Conjecture}[Theorem]{Conjecture}
\newtheorem{Problem}[Theorem]{Problem}

\date{version of \today}

\setlength{\textwidth}{15.3cm} \setlength{\textheight}{23cm}
\setlength{\topmargin}{-0.0cm} \setlength{\oddsidemargin}{-1mm}
\setlength{\evensidemargin}{-1mm} \setlength{\abovedisplayskip}{3mm}
\setlength{\belowdisplayskip}{3mm}
\setlength{\abovedisplayshortskip}{0mm}
\setlength{\belowdisplayshortskip}{2mm} \normalbaselines
\raggedbottom

\title
[The enough $g$-pairs property and denominator vectors of cluster algebras]
{The enough $g$-pairs property and denominator vectors\\ of cluster algebras}

\author{Peigen Cao $\;\;\;\;\;\;$ Fang Li $\;\;\;\;\;\;$}
\address{Peigen Cao
\newline Department
of Mathematics, Zhejiang University (Yuquan Campus), Hangzhou, Zhejiang
310027,  P.R.China}
\email{peigencao@126.com}

\address{Fang Li
\newline Department
of Mathematics, Zhejiang University (Yuquan Campus), Hangzhou, Zhejiang
310027, P.R.China}
\email{fangli@zju.edu.cn}

\begin{document}

\renewcommand{\thefootnote}{\alph{footnote}}

\setcounter{footnote}{-1}  \footnote{\emph{ Mathematics Subject
Classification(2010)}:  13F60, 05E40}
\renewcommand{\thefootnote}{\alph{footnote}}
\setcounter{footnote}{-1} \footnote{ \emph{Keywords}: cluster algebra, the enough $g$-pairs property, denominator vector,  the proper Laurent monomial property, compatibility degree.}

\begin{abstract} In this paper, we introduce the enough $g$-pairs property for a principal coefficients cluster algebra, which can be understood as a strong version of the sign-coherence of the $G$-matrices.  Then we prove that any skew-symmetrizable principal coefficients cluster algebra has the enough $g$-pairs property. As an application, we prove the positivity of denominator vectors for any skew-symmetrizable cluster algebra. In fact, we give  complete answers to some long standing conjectures  on denominator vectors of cluster variables (see Conjecture \ref{conjecture} below), which are proposed by Fomin and Zelevinsky in [Compos. Math. 143(2007), 112-164].

In addition, we prove that the seeds whose clusters contain particular cluster variables form a connected  subgraph of the exchange graph of this cluster algebra. Lastly, a criterion to distinguish whether particular cluster variables belong to one common cluster is given.
\end{abstract}

\maketitle
\bigskip

\section{introduction}

Cluster algebras were introduced by Fomin and Zelevinsky in \cite{FZ}. The motivation was to create a common framework for phenomena occurring in connection
with total positivity and canonical bases. A cluster algebra $\mathcal A(\mathcal S)$  is a subalgebra of an ambient field $\mathcal F$ generated by certain combinatorially defined generators (i.e., {\em cluster variables}), which are grouped into overlapping {\em clusters}.  Fomin and  Zelevinsky proved the Laurent phenomenon, that is,  for any given cluster ${\bf x}_{t_0}=\{x_{1;t_0},\cdots,x_{n;t_0}\}$ of $\mathcal A(\mathcal S)$, each cluster variable $x_{i;t}$ of $\mathcal A(\mathcal S)$ can be written as
 $$x_{i;t}=\frac{f(x_{1;t_0},\cdots,x_{n;t_0})}{x_{1;t_0}^{d_1}\cdots x_{n;t_0}^{d_n}},$$
where $f$ is a polynomial which is not divisible by any variable $x_{j;t_0}$. The vector $d^{t_0}(x_{i;t})=(d_1,\cdots,d_n)^{\top}$ is called  the {\em denominator vector} (or say, {\em $d$-vector} for short) of the cluster variable $x_{i;t}$ with respect to ${\bf x}_{t_0}$.  Fomin and Zelevinsky in \cite{FZ} conjectured that the coefficients in $f$ are positive, which has been affirm by Lee and Schiffler in \cite{LS} for skew-symmetric cluster algebras, by Gross {\em et al.} in \cite{GHKK} for skew-symmetrizable cluster algebras, by Davison in \cite{D} for skew-symmetric quantum cluster algebras. The classic positivity conjecture is for numerator. In fact, there is also the correspond positivity conjecture for denominator, which is about denominator vectors.

\begin{Conjecture}(Fomin and Zelevinsky \cite[Conjecture 7.4]{FZ3})\label{conjecture}

(i). (Positivity of $d$-vectors) If $x_{i;t}\notin {\bf x}_{t_0}$, then $d^{t_0}(x_{i;t})$ is a non-negative vector, i.e., $d^{t_0}(x_{i;t})\in\mathbb N^n$;

(ii). Each component $d_k$ depends only on $x_{i;t}$ and $x_{k;t_0}$, not on the clusters containing $x_{k;t_0}$;

(iii). $d_k=0$ if and only if there is a cluster containing both $x_{i;t}$ and $x_{k;t_0}$.
\end{Conjecture}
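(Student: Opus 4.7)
The plan is to deduce Conjecture \ref{conjecture} from the enough $g$-pairs property, the paper's main technical result for principal coefficients cluster algebras. Since $d$-vectors of cluster variables do not change under deformations of the coefficients, I would first reduce the conjecture to the principal coefficients case, where the $g$-pairs property is directly available.

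The conceptual intermediate step is a \emph{proper Laurent monomial property}: for any cluster $\mathbf{x}_{t_0}$ and any cluster variable $x_{i;t}\notin\mathbf{x}_{t_0}$ (and more generally for any cluster monomial not supported on $\mathbf{x}_{t_0}$), the reduced Laurent expansion in $\mathbf{x}_{t_0}$ has denominator vector in $\mathbb{N}^n\setminus\{0\}$. I would derive this from the $g$-pairs property by pairing $x_{i;t}$ with a suitable element of the $t_0$-seed and using the corresponding $g$-vector identities to read off, coordinate by coordinate, the sign of the exponent of $x_{k;t_0}$ in the denominator. Part (i) of the conjecture is the immediate specialization to a single cluster variable.

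For parts (ii) and (iii) I would combine the proper Laurent monomial property with a second application of the $g$-pairs machinery: the set of seeds whose clusters contain a fixed cluster variable $x_{k;t_0}$ forms a connected subgraph of the exchange graph. The forward direction of (iii) then follows by tracking the Laurent expansion along a mutation sequence that stays inside this subgraph and never mutates at index $k$, showing $d_k=0$ whenever a common cluster exists. The converse uses the proper Laurent monomial property inside the rank-$(n-1)$ sub-cluster-algebra obtained by freezing $x_{k;t_0}$: if no common cluster exists, $x_{i;t}$ still lies in this sub-algebra and is not in its cluster at $t_0$, so the same property there forces $d_k>0$. Part (ii) is then a consequence of the connectedness: within the subgraph of seeds containing $x_{k;t_0}$, the value of $d_k$ is preserved under mutations in the remaining $n-1$ directions, so it depends only on the pair $(x_{i;t},x_{k;t_0})$.

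The main obstacle is converting the abstract pairing encoded by the enough $g$-pairs property into the concrete statement about the denominator; this requires careful bookkeeping of $g$-vectors, $F$-polynomials, and the tropical structure underlying the Laurent expansion, in order to control exactly which coordinates are forced to be positive. The connectedness result, while indispensable, is expected to fit into a more routine induction on the exchange graph once the $g$-pairs machinery is in place.
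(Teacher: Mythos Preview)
Your overall architecture matches the paper's: reduce to principal coefficients, use the enough $g$-pairs property to control each coordinate $d_k$ separately, prove connectedness of the seeds containing $x_{k;t_0}$, and deduce (ii) from connectedness together with the fact that $d_k$ is unchanged under mutation in directions $\neq k$. The forward direction of (iii) is handled exactly as you say.

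The gap is in your argument for the converse of (iii). You propose: if $x_{i;t}$ shares no cluster with $x_{k;t_0}$, freeze $x_{k;t_0}$, observe that ``$x_{i;t}$ still lies in this sub-algebra'', and invoke the proper Laurent monomial property there to force $d_k>0$. This does not work. By the very connectedness result you have just established, the cluster variables of the frozen rank-$(n-1)$ algebra are precisely those cluster variables of $\mathcal A(\mathcal S)$ that do appear in some cluster together with $x_{k;t_0}$; under your hypothesis $x_{i;t}$ is \emph{not} one of them, so the proper Laurent monomial property (a statement about cluster monomials) says nothing about $x_{i;t}$. Moreover, $x_{k;t_0}$ is a coefficient in that sub-algebra, so the property there does not speak to the $k$-th denominator coordinate at all. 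Even granting both points, the property only yields $d\ge 0$, not strict positivity in a prescribed coordinate.

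The paper closes this gap not with a sub-algebra argument but by extracting more from the same $g$-pair you used for (i). Taking $I=\{1,\dots,n\}\setminus\{k\}$ and a $g$-pair $(\mathbf{x}_t,\mathbf{x}_{t'})$ along $I$, one has $x_{k;t'}=x_{k;t_0}$, and the matrix $R_t^{t'}=G_{t'}^{-1}G_t$ has all rows except the $k$-th nonnegative. A trichotomy on the sign of the $(k,i)$-entry $r_{ki}$ then gives the full picture: if $r_{ki}\ge 0$ then $x_{i;t}$ and a cluster monomial in $\mathbf{x}_{t'}$ share the same $g$-vector, hence are equal, hence $x_{i;t}\in\mathbf{x}_{t'}$ (using that a cluster variable equal to a cluster monomial must be one of its factors); if $r_{ki}<0$ then the leading Laurent monomial $\mathbf{x}_{t'}^{R_t^{t'}\mathbf e_i}$ already has a negative exponent on $x_{k;t'}$, so $d_k>0$. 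In particular $d_k=0$ forces $x_{i;t}\in\mathbf{x}_{t'}$, and $\mathbf{x}_{t'}$ is the desired common cluster. You should replace the frozen-sub-algebra step with this direct $g$-vector argument.
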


\begin{Remark}
We found that each statement in Conjecture \ref{conjecture} is very useful.

(a).  The statement (i) makes us provide a new proof of linear independence of cluster monomials (see Proposition \ref{thmproper}).

(b). The statement (ii) makes it possible to give a well-defined function, called compatible degree, on the set of cluster variables for any skew-symmetrizable cluster algebra  (see Definition \ref{defdegree}).

(c). The statement (iii) inspires us to give an answer to Problem \ref{problem} below (see Theorem \ref{thmcompatible}).
\end{Remark}

Compared with other conjectures, such as the positivity conjecture of cluster variables, the sign-coherence conjecture of $C$-matrices and $G$-matrices, the linear independence conjecture in cluster algebras (referring  to \cite{FZ,FZ3} for these conjectures),  only a little of progress on  Conjecture \ref{conjecture} has been made  over the past years.

 For cluster algebras of finite type, Conjecture \ref{conjecture} is affirmed  in \cite{CP}.
 For cluster algebras from surfaces, the components of $d$-vectors are  interpreted as certain modified intersection numbers (see \cite{FST}). This fact gives the affirmation of Conjecture \ref{conjecture} for such cluster algebras.

 For acyclic skew-symmetric cluster algebras, the $d$-vectors with respect to an acyclic seed, can be interpreted as the dimension vectors of indecomposable exceptional modules in a module category over a hereditary algebra (see \cite{CK}), thus Conjecture \ref{conjecture} (i) holds for the $d$-vectors with respect to an acyclic seed.
Recently, the positive answer to Conjecture \ref{conjecture} (i) for  {\em skew-symmetric cluster algebras} has been given  by us in \cite{CL1}. Generally speaking, Conjecture \ref{conjecture} remains largely open now.

 Why are  $d$-vectors  difficult to study? One of the reasons is that it is hard to find  a right model to explain  the $d$-vectors with respect to a general initial seed,  and this is also the reason making  $d$-vectors so mysterious in cluster algebras.

Now we list other conjectures or problems which will be involved in this paper.

\begin{Conjecture} (Fomin and Zelevinsky \cite[Conjecture 4.14(3)]{FZ2})\label{conjecture2}
In a cluster algebra, the seeds whose clusters contain particular cluster variables form a connected  subgraph of the exchange graph of this cluster algebra.
\end{Conjecture}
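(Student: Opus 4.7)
The plan is to prove Conjecture~\ref{conjecture2} by an inductive argument. Fix cluster variables $z_1,\dots,z_k$ and let $\mathcal{E}_S$ denote the full subgraph of the exchange graph on those seeds whose clusters contain all of $z_1,\dots,z_k$. Connectedness of $\mathcal{E}_S$ reduces to showing that any two vertices $\Sigma_t,\Sigma_{t'}\in\mathcal{E}_S$ can be joined inside $\mathcal{E}_S$. Writing $m:=|{\bf x}_t\setminus{\bf x}_{t'}|$, the case $m=0$ forces ${\bf x}_t={\bf x}_{t'}$ and hence $\Sigma_t=\Sigma_{t'}$. For the inductive step, I would seek an index $i$ with $x_{i;t}\in{\bf x}_t\setminus{\bf x}_{t'}$ such that mutating $\Sigma_t$ at $i$ yields a seed $\Sigma_{t''}$ whose new cluster variable already belongs to ${\bf x}_{t'}$. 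Since no $z_j$ is mutated at (each $z_j$ lies in both ${\bf x}_t$ and ${\bf x}_{t'}$), every intermediate seed lies in $\mathcal{E}_S$, and such a mutation strictly decreases $m$, so the induction closes.

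The main obstacle is producing such an index $i$. I expect this to follow by combining the enough $g$-pairs property, already established in the paper, with Conjecture~\ref{conjecture}(iii), which characterises coexistence of two cluster variables in a common cluster by vanishing of the relevant component of a $d$-vector. Concretely, I would compare the $G$-matrix $G_{t'}^{t}$ expressing the cluster at $t'$ in $g$-vectors with respect to $t$. The columns indexed by the shared variables $z_1,\dots,z_k$ are standard basis vectors, while the enough $g$-pairs property should force at least one of the remaining columns, corresponding to some $x\in{\bf x}_{t'}\setminus{\bf x}_t$, to have the $g$-vector shape of a one-step mutation neighbour of $\Sigma_t$. This would single out the required index $i$.

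A preliminary reduction to the principal coefficients setting is routine via the standard covering of exchange graphs, under which connectedness descends. The subtle point I anticipate is checking that the $g$-vector comparison actually pinpoints an $i$ whose mutation output belongs to ${\bf x}_{t'}$, rather than only to some other cluster containing $\{z_1,\dots,z_k\}$; verifying this compatibility will require the full structural content of the enough $g$-pairs property and not merely its sign-coherence consequence.
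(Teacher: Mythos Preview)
Your inductive scheme has a genuine gap at the inductive step. You need, for arbitrary $\Sigma_t,\Sigma_{t'}\in\mathcal{E}_S$ with $m\geq 1$, an index $i$ such that the single mutation $\mu_i(\Sigma_t)$ already produces a variable of ${\bf x}_{t'}$. This is a strong structural claim---essentially a ``greedy'' convexity statement about the cluster complex---and nothing in the paper yields it. The enough $g$-pairs property gives you, for a chosen $I$, a cluster ${\bf x}_{t''}$ connected to the base by an $I$-sequence and a nonnegative matrix $Q$ with $G_t|_{I\times[1,n]}=G_{t''}|_{I\times I}Q$; it does not say any column of $G_{t'}^{t}$ has the shape of a one-step mutation of $\Sigma_t$, nor that the resulting variable lies in the specific target ${\bf x}_{t'}$. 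You yourself flag this as ``the subtle point'', but as stated it is the entire content of the argument, not a detail to be checked.

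The paper sidesteps this difficulty by not attempting to walk from $\Sigma_t$ toward an arbitrary $\Sigma_{t'}$. Instead it fixes a single base seed $({\bf x}_{t_0},{\bf y}_{t_0},B_{t_0})$ containing the prescribed variables (placed at positions $p+1,\dots,n$), passes to principal coefficients at $t_0$, and for any cluster ${\bf x}_t$ containing those variables applies the enough $g$-pairs property along $I=\{1,\dots,p\}$ to obtain a $g$-pair $({\bf x}_t,{\bf x}_{t'})$ with ${\bf x}_{t'}$ connected to ${\bf x}_{t_0}$ by an $I$-sequence. The point is then to prove ${\bf x}_t={\bf x}_{t'}$ outright: using the block form of $G_t$ and $G_{t'}$ one manufactures, for each $k$, cluster monomials in ${\bf x}_t$ and ${\bf x}_{t'}$ with the same $g$-vector, concludes equality by Theorem~\ref{thmmonomial}, and then reads off $x_{k;t}\in{\bf x}_{t'}$ via Proposition~\ref{prosupport}. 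Hence every such ${\bf x}_t$ is itself connected to ${\bf x}_{t_0}$ by an $I$-sequence, and connectedness of $\mathcal{E}_S$ follows. No step-by-step decrease of $m$ is ever needed. If you want to salvage your outline, the natural fix is to drop the ``$m$ decreases by one'' induction and instead take principal coefficients at $t$, apply the $g$-pair argument to the target ${\bf x}_{t'}$, and prove ${\bf x}_{t'}$ coincides with the resulting partner cluster---but that is precisely the paper's proof with the base point relabelled.
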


\begin{Problem}\label{problem}
Let $x_1,\cdots,x_p$ be some different cluster variables of $\mathcal A(\mathcal S)$. Find a criterion to distinguish
whether $x_1,\cdots,x_p$  belong to one common cluster  of $\mathcal A(\mathcal S)$.
\end{Problem}

Conjecture \ref{conjecture2} has positive answers for acyclic skew-symmetric cluster algebras and cluster algebras from surface (see \cite{CK,FST}).
 Problem \ref{problem}  has a nice answer for cluster algebras from surface, or cluster algebras admitting a categorification (see \cite{FST,CK}). In such cases, cluster variables are in bijection with tagged arcs of the marked
 surface or reachable indecomposable rigid objects in corresponding cluster category and clusters are in bijection with triangulations of the marked surface or reachable cluster tilting objects.

 In present paper a positive answer to Conjecture \ref{conjecture2} and a  criterion for Problem \ref{problem} are given  for any skew-symmetrizable cluster algebra (see Theorem \ref{corconnected} and Theorem \ref{thmcompatible}).

Now we talk about the strategy in this paper. The start point of this paper is the positivity of $d$-vectors. In order to prove the positivity of $d$-vectors, we introduce the definition of the enough $g$-pairs property for a cluster algebra with principal coefficients. What is the enough $g$-pairs property and why do we introduce it?
   In order to study the properties of the Laurent expansion of a cluster variable $x_{i;t}$ with respect to an initial cluster ${\bf x}_{t_0}$, we hope to find a new cluster ${\bf x}_{t^\prime}$ such that
    the properties that we focus on are preserved  when we turn to study the  Laurent expansion of $x_{i;t}$ with respect to this new cluster ${\bf x}_{t^\prime}$ and these properties for the  Laurent expansion of $x_{i;t}$ with respect to this new cluster ${\bf x}_{t^\prime}$ is  easier to study. Based on this philosophy, the enough $g$-pairs property for a cluster algebra with principal coefficients is introduced.

    The next thing is  which cluster algebra with principal coefficients has the enough $g$-pairs property. Thanks to \cite[Theorem 33]{M} by Muller on scattering diagram, we prove that any skew-symmetrizable cluster algebra with principal coefficients has the enough $g$-pairs property (see Theorem \ref{thmenough}). This is a very interesting  thing to us, because the sign-coherence of $G$-matrices is a direct result of the enough $g$-pairs property. So the enough $g$-pairs property can be understood as a strong version of the  sign-coherence of $G$-matrices.

    There are two main results on $g$-pairs. One is called the existence theorem for $g$-pairs (Theorem \ref{thmenough}), which says that any skew-symmetrizable cluster algebra with principal coefficients has the enough $g$-pairs property. The other is called the uniqueness theorem for $g$-pairs (Theorem \ref{thmunique}), which says that the $g$-pairs is unique if we fix a cluster and a subset of $\{1,\cdots,n\}$.  The existence theorem is important in this paper.

    We introduce the enough $g$-pairs property  to prove the positivity of $d$-vectors at first. Quickly, we found that  answers to Conjecture \ref{conjecture}, Conjecture \ref{conjecture2} and  Problem \ref{problem}   can be given  after  the enough $g$-pairs property  is introduced.

This paper is organized as follows.

In Section 2 some basic definitions and notations are introduced.
In Section 3 we recall some properties $g$-vectors of  cluster algebras.

In Section 4  we briefly review the scattering diagrams, using \cite{M} as a reference. Then we prove the existence theorem for $g$-pairs, i.e., any skew-symmetrizable cluster algebra with principal coefficients at $t_0$ has  the enough $g$-pairs property  (Theorem \ref{thmenough}). In addition, we explain why the sign-coherence of $G$-matrices is a direct result of  the enough $g$-pairs property. In Section 5 we prove the uniqueness theorem for the $g$-pairs (Theorem \ref{thmunique}).

In Section 6, we give ours answers to  Conjecture \ref{conjecture} and Conjecture \ref{conjecture2} (see Theorem \ref{thmanswer} and Theorem \ref{corconnected}).
In Section 7, we give an answer to Problem \ref{problem} in Theorem \ref{thmcompatible}.

 The following diagram gives  the logical dependence among the proofs of some main theorems in this paper.
$$
\begin{array}{ccc}
\xymatrix{Theorem\; \ref{thmenough}\; (\text{The enough $g$-pair property})\ar[r]\ar[d]\ar[rd]&Theorem \;\ref{corconnected}\; (\text{Answer to Conjecture \ref{conjecture2}})\ar[d]
\\
Theorem\; \ref{thmcompatible}\;(\text{Answer to Problem \ref{problem}})&Theorem \;\ref{thmanswer}\;(\text{Answer to Conjecture \ref{conjecture}}\ar[l])
}
\end{array}
$$
\vspace{5mm}

\section{Preliminaries}

\subsection{Cluster algebras}
Recall that $(\mathbb P, \oplus, \cdot)$ is a {\bf semifield } if $(\mathbb P,  \cdot)$ is an abelian multiplicative group endowed with a binary operation of auxiliary addition $\oplus$ which is commutative, associative, and distributive with respect to the multiplication $\cdot$ in $\mathbb P$.

Let $Trop(y_1,\cdots,y_m)$ be a free abelian group generated by $\{y_1,\cdots,y_m\}$. We define the addition $\oplus$ in $Trop(y_1,\cdots,y_m)$ by $\prod\limits_{i}y_i^{a_i}\oplus\prod\limits_{i}y_i^{b_i}=\prod\limits_{i}y_i^{min(a_i,b_i)}$, then $(Trop(y_1,\cdots,y_m), \oplus)$ is a semifield, which is called a {\bf tropical semifield}.

The multiplicative group of any semifield $\mathbb P$ is torsion-free \cite{FZ}, hence its group ring $\mathbb Z\mathbb P$ is a domain.
We take an ambient field $\mathcal F$  to be the field of rational functions in $n$ independent variables with coefficients in $\mathbb Z\mathbb P$.

An integer matrix $B_{n\times n}=(b_{ij})$ is  called  {\bf skew-symmetrizable} if there is a positive integer diagonal matrix $S$ such that $SB$ is skew-symmetric, where $S$ is said to be a {\bf skew-symmetrizer} of $B$.

\begin{Definition}
A {\bf  seed} in $\mathcal F$ is a triplet $({\bf x},{\bf y},B)$ such that

(i)  ${\bf x}=\{x_1,\cdots, x_n\}$ is a transcendence basis for $\mathcal F$ over Frac($\mathbb {ZP}$). ${\bf x}$ is called the  {\bf cluster} of  $({\bf x},{\bf y},B)$ and $x_1\cdots,x_n$  are called {\bf cluster variables}.

(ii) ${\bf y}=\{y_1,\cdots,y_n\}$ is a subset of  $\mathbb P$, where $y_1,\cdots,y_n$ are called {\bf coefficients}.

(iii) $B=(b_{ij})$ is a skew-symmetrizable matrix, called an {\bf exchange matrix}.
\end{Definition}

Let $({\bf x},{\bf y}, B)$ be a  seed in $\mathcal F$, one can associate {\em binomials} $F_1,\cdots,F_n$ defined by
\begin{eqnarray}
F_j=\frac{y_j}{1\oplus y_j}\prod\limits_{b_{ij}>0}x_i^{b_{ij}}+\frac{y_j}{1\oplus y_j}\prod\limits_{b_{ij}<0}x_i^{-b_{ij}}.\nonumber
\end{eqnarray}
$F_1,\cdots,F_n$ are called the {\bf exchange polynomials} of $({\bf x},{\bf y}, B)$. Clearly, $x_i\nmid F_j$ for any $i$ and $j$. Denote by ${\bf F}=\{F_1,\cdots,F_n\}$ the collection of  exchange polynomials of $({\bf x},{\bf y}, B)$.

\begin{Definition}\label{defmutation}
Let $({\bf x},{\bf y},B)$ be a  seed in $\mathcal F$, and $F_1,\cdots, F_n$ be the exchange polynomials of  $({\bf x},{\bf y},B)$. Define the {\bf mutation}  of  $({\bf x},{\bf y},B)$ in the direction $k\in\{1,\cdots,n\}$ as a new triple $\mu_k({\bf x},{\bf y},B)=( {\bf x}^{\prime},  {\bf y}^{\prime},  B^{\prime})$ in $\mathcal F$
given by
\begin{eqnarray}
b_{ij}^{\prime}&=&\begin{cases}-b_{ij}~,& i=k\text{ or } j=k;\\ b_{ij}+sgn(b_{ik})max(b_{ik}b_{kj},0)~,&otherwise.\end{cases}\nonumber\\
x_i^{\prime}&=&\begin{cases}x_i~,&\text{if } i\neq k\\ F_k/x_k,~& \text{if }i=k.\end{cases}\text{ and }
 y_i^{\prime}=\begin{cases} y_k^{-1}~,& i=k\\ y_iy_k^{max(b_{ki},0)}(1\bigoplus y_k)^{-b_{ki}}~,&otherwise.
\end{cases}.\nonumber
\end{eqnarray}
\end{Definition}
It can be seen that $\mu_k({\bf x},{\bf y},B)$ is also a  seed and $\mu_k(\mu_k({\bf x},{\bf y},B))=({\bf x},{\bf y},B)$.

\begin{Definition}\label{defcpattern}
  A {\bf cluster pattern} $\mathcal S$ is an assignment of a  seed  $({\bf x}_t,{\bf y}_t,B_t)$ to every vertex $t$ of the $n$-regular tree $\mathbb T_n$, such that for any edge $t^{~\underline{\quad k \quad}}~ t^{\prime},~({\bf x}_{t^{\prime}},{\bf y}_{t^{\prime}},B_{t^{\prime}})=\mu_k({\bf x}_t,{\bf y}_t,B_t)$.
\end{Definition}
We always denote by ${\bf x}_t=\{x_{1;t},\cdots, x_{n;t}\},~ {\bf y}_t=\{y_{1;t},\cdots, y_{n;t}\}, ~B_t=(b_{ij}^t).$
\begin{Definition} Let  $\mathcal S$ be a cluster pattern,   the {\bf cluster algebra} $\mathcal A(\mathcal S)$  associated with the given cluster pattern $\mathcal S$ is the $\mathbb {ZP}$-subalgebra of the field $\mathcal F$ generated by all cluster variables of  $\mathcal S$.
 \end{Definition}
 \begin{itemize}
 \item If  $\mathcal S$ is cluster pattern with the coefficients in $Trop(y_1,\cdots,y_m)$, the corresponding cluster algebra $\mathcal A(\mathcal S)$ is said to be a cluster algebra of {\bf geometric type}.
 \item If  $\mathcal S$ is cluster pattern  with the coefficients in $Trop(y_1,\cdots,y_n)$ and there exists a seed $({\bf x}_{t_0},{\bf y}_{t_0},B_{t_0})$ such that $y_{i;t_0}=y_i$ for $i=1,\cdots,n$, then  the corresponding cluster algebra  $\mathcal A(\mathcal S)$ is called a {\bf cluster algebra with principal coefficients at $t_0$}.
\end{itemize}
\begin{Definition}
(i). In  a cluster algebra $\mathcal A(\mathcal S)$, two seeds $({\bf x}_{t_1},{\bf y}_{t_1},B_{t_1})$ and  $({\bf x}_{t_2},{\bf y}_{t_2},B_{t_2})$ are called {\bf equivalent} if there exists a permutation $\sigma$ of $\{1,\cdots,n\}$ such that $x_{i;t_1}=x_{\sigma(i);t_2}$, $y_{i;t_1}=y_{\sigma(i);t_2}$ and $b_{ij}^{t_1}=b_{\sigma(i)\sigma(j)}^{t_2}$ for any $i$ and $j$.

(ii). The exchange graph $\Gamma(\mathcal S)$ of a cluster algebra $\mathcal A(\mathcal S)$ is a graph with vertices corresponding to equivalent classes of seeds of  $\mathcal A(\mathcal S)$ and edges corresponding to mutations.
\end{Definition}

Let ${\bf x}_t$ be a cluster of a cluster algebra $\mathcal A(\mathcal S)$, denote by ${\bf x}_t^{\bf a}:=\prod\limits_{i=1}^nx_{i;t}^{a_i}$ for ${\bf a}\in \mathbb Z^n$, which is a Laurent monomial in ${\bf x}_t$. If ${\bf a}\in\mathbb N^n$, then ${\bf x}_t^{\bf a}$ is called a {\bf cluster monomial} in ${\bf x}_t$. If ${\bf a}\in \mathbb Z^n\backslash \mathbb N^n$, then ${\bf x}_t^{\bf a}$ is called a {\bf proper Laurent monomial} in ${\bf x}_t$.

\begin{Theorem} \label{thmLP} Let $\mathcal A(\mathcal S)$ be a skew-symmetrizable cluster algebra, and $({\bf x}_{t_0},{\bf y}_{t_0},B_{t_0})$ be a  seed of $\mathcal A(\mathcal S)$.

(i).( \cite[Theorem 3.1]{FZ}, Laurent phenomenon)
 Any cluster variable $x_{i;t}$ of $\mathcal A(\mathcal S)$ is a $\mathbb {ZP}$-linear combination of  Laurent monomials in ${\bf x}_{t_0}$.

(ii). (\cite{FZ1}, sharpen Laurent phenomenon) If $\mathcal A(\mathcal S)$ is a cluster algebra of geometric type with coefficients in $Trop(y_1,\cdots,y_m)$, then any cluster variable $x_{i;t}$ is a
$\mathbb Z[y_1,\cdots,y_m]$-linear combination of  Laurent monomials in ${\bf x}_{t_0}$.

(iii). (\cite{GHKK}, positive  Laurent phenomenon)  Any cluster variable $x_{i;t}$ of $\mathcal A(\mathcal S)$ is a $\mathbb {NP}$-linear combination of Laurent monomials in ${\bf x}_{t_0}$.
\end{Theorem}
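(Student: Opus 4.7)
The plan is to treat the three parts in sequence, with each either building on or refining the previous. For part (i), I would induct on the distance $d(t,t_0)$ in the $n$-regular tree $\mathbb T_n$. The base case $t=t_0$ is trivial. For the inductive step, pick a vertex $t'$ adjacent to $t$ by an edge labeled $k$ with $t'$ strictly closer to $t_0$, and rewrite $x_{k;t}=F_k({\bf x}_{t'})/x_{k;t'}$ via the exchange relation of Definition \ref{defmutation}. By the inductive hypothesis, each entry of ${\bf x}_{t'}$ is a Laurent polynomial in ${\bf x}_{t_0}$, so after substitution $x_{k;t}$ becomes a rational expression in the $x_{j;t_0}$. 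The main obstacle is ruling out spurious denominators: a priori some polynomial factor other than a monomial in the $x_{j;t_0}$ could appear in the denominator after cancellation. Fomin--Zelevinsky's Caterpillar Lemma resolves this by reducing the global check to a finite list of local rank $\le 3$ verifications along a carefully chosen path through clusters adjacent to $t_0$.

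For part (ii), the sharpening amounts to replacing $\mathbb{ZP}$ (which permits factors of $y_i^{-1}$) by $\mathbb Z[y_1,\ldots,y_m]$. In the tropical semifield $Trop(y_1,\ldots,y_m)$, each quotient $y_{j;t}/(1\oplus y_{j;t})$ reduces to a monomial in the $y_i$ with \emph{nonnegative} exponents, so the only candidate source of a negative power of some $y_i$ is the iterated substitution in the mutation formula. A tracking argument running parallel to the induction in (i) shows these negative powers cancel. A conceptually cleaner route is to lift to the principal-coefficients cover, invoke the Fomin--Zelevinsky separation formula to express $x_{i;t}$ via its $g$-vector and $F$-polynomial, and specialize, using that $F$-polynomials are genuine polynomials in the $\hat y$-variables.

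For part (iii), positivity is the deepest and I would follow the Gross--Hacking--Keel--Kontsevich strategy through scattering diagrams. The plan is to build a consistent scattering diagram $\mathfrak D_{B_{t_0}}$ in the ambient tropical space whose walls carry wall-crossing automorphisms with positive integer coefficients, and then define theta functions $\vartheta_q$ as sums over broken lines from infinity to a generic basepoint, each broken line contributing a positive Laurent monomial in the $x_{j;t_0}$ (and a $\mathbb{NP}$-coefficient). The key claim is that every cluster variable $x_{i;t}$ equals $\vartheta_{g(x_{i;t})}$, at which point positivity is immediate. The hard parts are three: proving the scattering diagram can be consistently extended from its initial walls, showing the broken-line sum is finite for each fixed $q$ in the relevant chamber, and identifying cluster variables with theta functions; all three are handled by inductive construction of $\mathfrak D$ and a mutation-invariance argument for the resulting theta basis. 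This is also the input we will reuse later (via \cite{M}) to establish the enough $g$-pairs property in Theorem \ref{thmenough}.
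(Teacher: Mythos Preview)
The paper does not prove Theorem~\ref{thmLP}; it is stated purely as a citation of external results (\cite{FZ}, \cite{FZ1}, \cite{GHKK}) and used as a black box throughout. So there is no ``paper's own proof'' to compare your proposal against.

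That said, your outline is a faithful high-level summary of how the cited references establish the three parts: the Caterpillar Lemma reduction for (i) is precisely the Fomin--Zelevinsky argument in \cite{FZ}; the separation-formula route for (ii) is a standard modern packaging of the geometric-type sharpening; and the scattering-diagram/theta-function identification for (iii) is the \cite{GHKK} strategy. None of this is needed for the present paper, which only \emph{uses} these statements. If you are writing up the paper, simply cite the references and move on; if you are writing an expository account of the proofs themselves, your sketch is a reasonable roadmap, though each of the ``hard parts'' you flag in (iii) is itself a substantial body of work and would need considerably more detail to count as a proof.
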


Let $\mathcal A(\mathcal S)$ be a cluster algebra with principal coefficients at $t_0$, one can give a $\mathbb Z^n$-grading of $\mathbb Z[x_{1;t_0}^{\pm1},\cdots,x_{n;t_0}^{\pm1},y_1,\cdots,y_n]$ as follows:
$$deg(x_{i;t_0})={\bf e}_i,~deg(y_j)=-{\bf b}_j,$$
where ${\bf e}_i$ is the $i$-th column vector of $I_n$, and ${\bf b}_j$ is the $j$-th column vector of $B_{t_0}$, $i,j=1,2,\cdots,n$. As shown in \cite{FZ3} every cluster variable $x_{i;t}$ of $\mathcal A(\mathcal S)$ is homogeneous with respect to this $\mathbb Z^n$-grading. The {\bf $g$-vector} $g(x_{i;t})$ of a cluster variable $x_{i;t}$ is defined to be its degree with respect to the $\mathbb Z^n$-grading and we write $g(x_{i;t})=(g_{1i}^t,~g_{2i}^t,~\cdots,~g_{ni}^t)^{\top}\in\mathbb Z^n$. Let ${\bf x}_t$ be a cluster of $\mathcal A(\mathcal S)$,  the matrix $G_t=(g(x_{1;t}),\cdots,g(x_{n;t}))$ is called the {\bf $G$-matrix} of ${\bf x}_t$.

Clearly, any Laurent monomial ${\bf x}_t^{\bf a}=\prod\limits_{i=1}^{n}x_{i;t}^{a_i}\in\mathcal F$ is also homogeneous. And the degree of  ${\bf x}_t^{\bf a}$ is $G_t{\bf a}=:g({\bf x}_t^{\bf a})$, which is called the {\em $g$-vector of ${\bf x}_t^{\bf a}$}.

\begin{Theorem}(Sign-coherence \cite{GHKK})\label{thmfg} Let $\mathcal A(\mathcal S)$ be a skew-symmetrizable cluster algebra with principal coefficients at $t_0$, and ${\bf x}_t$ be a cluster of $\mathcal A(\mathcal S)$. Then

(i) any two nonzero entries of the $G$-matrix $G_t$ of ${\bf x}_t$ in the same row have the same sign.

(ii)  each cluster variable $x_{i;t}$  has the form of
\begin{equation}\label{gvectorsign}
x_{i;t}={\bf x}_{t_0}^{g(x_{i;t})}(1+\sum\limits_{0\neq {\bf v}\in\mathbb N^n}c_{\bf v}{\bf y}^{\bf v}{\bf x}_{t_0}^{B_{t_0}{\bf v}})\nonumber,
\end{equation}
where $g(x_{i;t})$ is the {\bf g}-vector of $x_{i;t}$ , ${\bf x}_{t_0}^{g(x_{i;t})}=\prod\limits_{j=1}^nx_{i;t_0}^{g_{ji}^t}$, ${\bf y}^{\bf v}=\prod\limits_{j=1}^ny_i^{v_i}$, and $c_{\bf v}\geq 0$.
\end{Theorem}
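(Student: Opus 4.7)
The plan is to reduce both parts to sign-coherence of $C$-matrices together with the positive Laurent phenomenon, and then to establish that sign-coherence via scattering diagrams.

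For part (ii), I would invoke the \emph{separation of additions} formula of Fomin--Zelevinsky, which expresses any cluster variable in a principal-coefficient cluster algebra as
\begin{equation*}
x_{i;t}=\frac{F_{i;t}(\hat y_1,\ldots,\hat y_n)}{F_{i;t}|_{\mathbb P}(y_1,\ldots,y_n)}\,{\bf x}_{t_0}^{g(x_{i;t})},\qquad \hat y_j=y_j\,{\bf x}_{t_0}^{{\bf b}_j},
\end{equation*}
where ${\bf b}_j$ is the $j$-th column of $B_{t_0}$ and $F_{i;t}$ is the corresponding $F$-polynomial. Theorem \ref{thmLP}(iii) immediately gives that $F_{i;t}$ has nonnegative integer coefficients; the only remaining ingredient is that $F_{i;t}$ has constant term equal to $1$, which collapses the tropical denominator to $1$ and yields exactly the displayed form with $c_{\bf v}\ge 0$. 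The ``constant term $=1$'' property is a standard consequence of sign-coherence of $C$-matrices.

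For part (i), I would apply the tropical duality of Nakanishi--Zelevinsky to translate row-sign-coherence of $G_t$ into column-sign-coherence of a $C$-matrix in the mutation pattern starting from $B_{t_0}^{\top}$. To prove the $C$-matrix statement, I would build the consistent scattering diagram $\mathfrak D^{B_{t_0}^{\top}}$ in $M_{\mathbb R}\cong\mathbb R^n$: the initial walls are the coordinate hyperplanes decorated by the columns of $B_{t_0}^{\top}$, and an inductive analysis of the walls produced by the consistency completion shows that every scattering-diagram wall has a primitive normal whose coordinates share one sign. The cluster chamber corresponding to $t$ is a simplicial cone cut out by $n$ walls whose normals are precisely the $c$-vectors, which therefore sign-cohere column by column.

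The main obstacle is the scattering-diagram step in the skew-symmetrizable (not merely skew-symmetric) setting. In the skew-symmetric case one can shortcut this via additive categorification, identifying columns of $C_t$ with dimension vectors of indecomposable rigid modules; in general one is forced to work directly with broken lines and theta functions in the style of Gross--Hacking--Keel--Kontsevich, and then match the theta functions supported on cluster chambers with the cluster monomials. That identification, rather than the brief formal manipulations that follow it, is where all of the genuine difficulty lies.
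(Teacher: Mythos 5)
The paper does not actually prove Theorem \ref{thmfg}: it is stated as an imported result with \cite{GHKK} as the source and is then used as a black box (e.g., in the proofs of Theorem \ref{detgthm} and Theorem \ref{thmmonomial}), so there is no internal argument to measure yours against. Your sketch is essentially a correct roadmap of how the cited literature establishes the theorem. For (ii): the separation-of-additions formula of \cite{FZ3}, plus positivity of the $F$-polynomial coefficients (Theorem \ref{thmLP}(iii)), plus the constant-term-$1$ property of $F$-polynomials, which by \cite[Proposition 5.6]{FZ3} is equivalent to column sign-coherence of $C$-matrices --- an equivalence the authors themselves record just before Theorem \ref{thmmonomial}. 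For (i): deducing row sign-coherence of $G$-matrices from column sign-coherence of $C$-matrices for the transposed initial matrix via \cite{NZ} is legitimate and non-circular, since the Nakanishi--Zelevinsky dualities are proved assuming only $C$-sign-coherence. One correction to your scattering-diagram step: in the GHKK formalism (and in the definition of a wall recalled in Section 4 of this paper) a wall is a pair $({\bf v},W)$ with $0\neq{\bf v}\in\mathbb N^{n}$, so the normals are sign-coherent \emph{by construction}, and no inductive analysis of the consistency completion is needed; the genuinely hard content is rather the identification of the facet normals of a reachable chamber with the $c$-vectors of the corresponding seed, i.e., \cite[Lemma 2.10]{GHKK}, quoted here as Theorem \ref{thmgvectors}, together with the skew-symmetrizable extension of the machinery --- which you correctly flag as the crux. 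Finally, note that the paper later re-derives part (i) internally as Corollary \ref{corsign} from the enough $g$-pairs property (Theorem \ref{thmenough}); that route differs from yours in its last step but still rests on the same GHKK scattering-diagram input, so neither the paper nor your proposal contains an independent proof of sign-coherence.
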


\subsection{The $d$-vectors}
  Let $\mathcal A(\mathcal S)$ be a skew-symmetrizable cluster algebra, and $({\bf x}_{t_0}, {\bf y}_{t_0}, B_{t_0})$ be a seed of  $\mathcal A(\mathcal S)$. By Laurent phenomenon,  any cluster variable $x$ of $\mathcal A(\mathcal S)$ has the form of $x=\sum\limits_{{\bf v}\in V} c_{\bf v}{\bf x}_{t_0}^{\bf v}$, where $V$ is a subset of $\mathbb Z^n$, $0\neq c_{\bf v}\in \mathbb {ZP}$. Let $-d_{j}$ be the minimal exponent of $x_{j;t_0}$ appearing in the expansion $x=\sum\limits_{{\bf v}\in V} c_{\bf v}{\bf x}_{t_0}^{\bf v}$. Then $x$ has the form of
\begin{eqnarray}
\label{eqd}x=\frac{f(x_{1;t_0},\cdots,x_{n;t_0})}{x_{1;t_0}^{d_1}\cdots x_{n;t_0}^{d_n}},
\end{eqnarray}
where $f\in\mathbb {ZP}[x_{1;t_0},\cdots,x_{n;t_0}]$ with  $x_{j;t_0}\nmid f$ for $j=1,\cdots,n$.
The vector $d^{t_0}(x) = (d_1,\cdots, d_n)^{\top}$ is called the {\bf denominator vector} (briefly, {\bf $d$-vector})  of the cluster variable $x$ with respect to ${\bf x}_{t_0}$. For a cluster ${\bf x}_t$, the matrix $D_t^{t_0}=(d^{t_0}(x_{1;t}),\cdots,d^{t_0}(x_{n;t}))$ is called the {\bf $D$-matrix} of ${\bf x}_t$ with respect to ${\bf x}_{t_0}$.
Let ${\bf x}_t^{\bf v}$ be a cluster monomial in ${\bf x}_t$, it is easy to see that the $d$-vector of ${\bf x}_t^{\bf v}$ with respect to ${\bf x}_{t_0}$ is $d^{t_0}({\bf x}_t^{\bf v})=D_t^{t_0}{\bf v}$.

\begin{Proposition}\label{prodvectors}
(\cite[Proposition 2.5]{RS}, \cite[Proposition 2.7]{CL1})
 Let $\mathcal A(\mathcal S)$ be a cluster algebra, $x$ be a cluster variable, and $({\bf x}_t,{\bf y}_t,B_t)$, $({\bf x}_{t^\prime}, {\bf y}_{t^\prime},B_{t^\prime})$ be two seeds of  $\mathcal A(\mathcal S)$
with $({\bf x}_{t^\prime}, {\bf y}_{t^\prime},B_{t^\prime})=\mu_k({\bf x}_t,{\bf y}_t,B_t)$. Suppose that $d^t(x)=(d_1,\cdots,d_n)^{\top}$, $d^{t^\prime}(x)=(d_1^\prime,\cdots,d_n^\prime)^{\top}$ are the $d$-vectors of $x$ with respective to ${\bf x}_t$ and ${\bf x}_{t^\prime}$ respectively, then $d_i=d_i^{\prime}$ for $i\neq k$.
\end{Proposition}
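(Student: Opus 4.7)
The plan is to pass between the two Laurent expansions of $x$ via the explicit substitution $x_{k;t}=F_k/x_{k;t'}$, and then to read off the minimum $x_{j;t'}$-exponent for each $j\neq k$ using the $x_{j;t'}$-adic valuation on $\mathcal F$. Since $x_{i;t}=x_{i;t'}$ for $i\neq k$, essentially all the non-trivial behaviour under the mutation $\mu_k$ is concentrated in the single variable $x_{k;t}$, which is what makes a uniform argument possible.

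First I would write $x=P/\prod_i x_{i;t}^{d_i}$ with $P\in\mathbb{ZP}[{\bf x}_t]$ and $x_{i;t}\nmid P$ for every $i$, and expand $P=\sum_{i=0}^{e_k} f_i(\hat{\bf x})\,x_{k;t}^i$, where $\hat{\bf x}=\{x_{j;t}:j\neq k\}$ and $e_k=\deg_{x_{k;t}}(P)$; by hypothesis $f_0$ and $f_{e_k}$ are both non-zero. Substituting $x_{k;t}=F_k/x_{k;t'}$ and clearing the factor $x_{k;t'}^{e_k}$ from the numerator produces the identity
$$x=\frac{Q\cdot x_{k;t'}^{d_k-e_k}}{F_k^{d_k}\prod_{j\neq k}x_{j;t'}^{d_j}},\qquad Q:=\sum_{i=0}^{e_k} f_i\, F_k^{i}\, x_{k;t'}^{e_k-i}\in\mathbb{ZP}[{\bf x}_{t'}],$$
which holds in $\mathcal F$ regardless of the sign of $d_k$, so no case split on $d_k$ is needed.

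Fix $j\neq k$, and let $\nu_j$ be the $x_{j;t'}$-adic valuation on the Laurent polynomial ring $\mathbb{ZP}[{\bf x}_{t'}^{\pm}]$, extended to its field of fractions. By the definition of the $d$-vector one has $\nu_j(x)=-d'_j$. Now $x_{k;t'}$ contributes nothing because it is a different variable, and $\nu_j(F_k)=0$ because the defining formula of the exchange polynomial in the preliminaries contains a monomial carrying no $x_{j;t'}$. Plugging these into the displayed identity reduces the problem to showing
$$\nu_j(Q)=0,$$
from which $\nu_j(x)=\nu_j(Q)-d_j=-d_j$, hence $d'_j=d_j$, will follow immediately.

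The only delicate point, and the step I expect to be the main obstacle, is this last equality. Since $x_{j;t}\nmid P$, at least one coefficient $f_{i^\ast}$ satisfies $\nu_j(f_{i^\ast})=0$, so the summand $f_{i^\ast}F_k^{i^\ast}x_{k;t'}^{e_k-i^\ast}$ of $Q$ has $\nu_j$-value $0$ while every other summand is non-negative. The crucial observation is that the summands for distinct $i$ carry distinct $x_{k;t'}$-degrees $e_k-i$, so their $x_{j;t'}^{0}$-parts are monomials in $x_{k;t'}$ of distinct degree and cannot cancel in $\mathbb{ZP}[\hat{\bf x}_{\neq j},x_{k;t'}]$. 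This gives $\nu_j(Q)=0$ and completes the argument.
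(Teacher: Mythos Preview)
The paper does not supply its own proof of this proposition; it is quoted with citations to \cite{RS} and \cite{CL1} and used as a black box thereafter. So there is nothing in the paper to compare against.

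Your argument is correct. The substitution $x_{k;t}=F_k/x_{k;t'}$ and the use of the $x_{j;t'}$-adic valuation is exactly the natural direct approach, and the two facts you isolate---that $x_{j;t'}\nmid F_k$ (stated in the preliminaries as ``$x_i\nmid F_j$ for any $i$ and $j$'') and that the summands of $Q$ have pairwise distinct $x_{k;t'}$-degrees---are precisely what is needed. One small remark on presentation: the phrase ``their $x_{j;t'}^{0}$-parts are monomials in $x_{k;t'}$ of distinct degree'' is slightly imprecise, since $f_iF_k^{i}$ is in general not a monomial. What you really use is that the coefficient of $x_{k;t'}^{\,e_k-i}$ in $Q$, viewed as a polynomial in $x_{k;t'}$ over $\mathbb{ZP}[\hat{\bf x}]$, is exactly $f_iF_k^{i}$; since $x_{j;t'}$ is a prime element of $\mathbb{ZP}[{\bf x}_{t'}]$ (as $\mathbb{ZP}$ is a domain), $x_{j;t'}\nmid f_{i^\ast}$ and $x_{j;t'}\nmid F_k$ give $x_{j;t'}\nmid f_{i^\ast}F_k^{i^\ast}$, hence $x_{j;t'}\nmid Q$. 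This is what your sentence intends, and with that clarification the proof is complete.
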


\begin{Proposition}((7.7) of \cite{FZ3})\label{prodrec}
$D_t^{t_0}=(d_{ij}^t)$ is uniquely determined by the initial condition $D_{t_0}^{t_0}=-I_n$, together with the recurrence relations:
\begin{equation}\label{mutationD}
(D_{t^{\prime}}^{t_0})_{ij}=\begin{cases}d_{ij}^t  & \text{if } j\neq k;\\ -d_{ik}^t+max\{\sum\limits_{b_{lk}^t>0}d_{il}^t b_{lk}^t, \sum\limits_{b_{lk}^t<0} -d_{il}^tb_{lk}^t\}  &\text{if } j=k.\end{cases}\nonumber
 \end{equation}
 for any $t,t'\in\mathbb T_n$ with edge $t^{~\underline{\quad k \quad}} ~t^{\prime}$.
\end{Proposition}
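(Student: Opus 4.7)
The plan is to verify the initial condition $D_{t_0}^{t_0} = -I_n$, establish the recurrence in the two cases $j \neq k$ and $j = k$, and observe that uniqueness follows automatically from the tree structure of $\mathbb{T}_n$ rooted at $t_0$. For the initial condition, since $x_{i;t_0}$ can only be written in reduced form (numerator not divisible by any $x_{l;t_0}$) as $1/x_{i;t_0}^{-1}$, one obtains $d^{t_0}(x_{i;t_0}) = -{\bf e}_i$. For the recurrence when $j \neq k$, the mutation rule gives $x_{j;t'} = x_{j;t}$, so their $d$-vectors coincide and $(D_{t'}^{t_0})_{ij} = d_{ij}^t$, which matches the first case of the formula.

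For the case $j = k$, I would work with the exchange relation
\[
x_{k;t'} \cdot x_{k;t} = F_k = c_+ \prod_{b_{lk}^t > 0} x_{l;t}^{b_{lk}^t} + c_- \prod_{b_{lk}^t < 0} x_{l;t}^{-b_{lk}^t}, \qquad c_\pm \in \mathbb{P}.
\]
Since the polynomial ring $\mathbb{ZP}[x_{1;t_0}, \ldots, x_{n;t_0}]$ is a UFD, the $d$-vectors are additive on any product of cluster variables in ${\bf x}_t$: the reduced Laurent form of a product is the product of the reduced Laurent forms of the factors. Applied to the right-hand side, the $i$-th components of the $d$-vectors of the two monomials are $\sum_{b_{lk}^t > 0} b_{lk}^t d_{il}^t$ and $\sum_{b_{lk}^t < 0} (-b_{lk}^t) d_{il}^t$. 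Applied to the left, $d^{t_0}(x_{k;t'})_i = d^{t_0}(F_k)_i - d_{ik}^t$, so the task reduces to computing $d^{t_0}(F_k)_i$.

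The main obstacle, and the point where the argument needs more than bare algebra, is ruling out cancellation when the two Laurent expansions of the monomials of $F_k$ in ${\bf x}_{t_0}$ are summed. I would resolve this by invoking the positive Laurent phenomenon (Theorem \ref{thmLP}(iii)): each of the two monomials is a cluster monomial in ${\bf x}_t$ and hence has $\mathbb{NP}$-coefficients when expanded in ${\bf x}_{t_0}$, a property preserved by scaling with the monomials $c_\pm \in \mathbb{P}$. Because $\mathbb{P}$ embeds as a $\mathbb{Z}$-linearly independent subset of $\mathbb{ZP}$, the semiring $\mathbb{NP}$ is zerosumfree, so the coefficient of any Laurent monomial ${\bf x}_{t_0}^{\bf v}$ in $c_+ M_+ + c_- M_-$ vanishes only if it vanishes in each summand. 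Consequently the minimal $x_{i;t_0}$-exponent of the sum equals the minimum of the two individual minimal exponents, yielding $d^{t_0}(F_k)_i = \max\{\sum_{b_{lk}^t > 0} d_{il}^t b_{lk}^t, \sum_{b_{lk}^t < 0} -d_{il}^t b_{lk}^t\}$ and hence the second case of the recurrence. Uniqueness of $D_t^{t_0}$ is then automatic by induction on the distance from $t_0$ in the tree $\mathbb{T}_n$, using the recurrence along each edge pointing away from $t_0$.
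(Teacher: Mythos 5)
Your proof is correct. Note first that the paper itself offers no argument for this proposition --- it is imported verbatim from (7.7) of \cite{FZ3} --- so there is no internal proof to compare against; your derivation is essentially the standard modern justification (compare the discussion of this recursion in \cite{RS}). The initial condition, the case $j\neq k$, the additivity of $d$-vectors on products, and the uniqueness-by-tree-induction are all routine, and you correctly isolate the one genuinely nontrivial point: the lowest-order terms in $x_{i;t_0}$ of the two monomials of $F_k$ could in principle cancel when their $i$-th denominator components coincide. Your appeal to the positive Laurent phenomenon (Theorem \ref{thmLP}(iii)), extended multiplicatively to cluster monomials and combined with the zerosumfreeness of $\mathbb{NP}$ inside the group ring $\mathbb{ZP}$, settles this cleanly; that is precisely the step that cannot be done by bare algebra. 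One cosmetic remark: you do not need $\mathbb{ZP}[x_{1;t_0},\dots,x_{n;t_0}]$ to be a UFD (for a general semifield $\mathbb{P}$ it need not be); additivity of $d$-vectors on products only requires each $x_{j;t_0}$ to be a prime element, which holds because the quotient by $(x_{j;t_0})$ is a polynomial ring over the domain $\mathbb{ZP}$.
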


From the above proposition, we know that the notion of $d$-vectors is independent of the choice of coefficient system. So when studying the $d$-vector $d^{t_0}(x)$ of a cluster variable $x$, we can focus on the cluster algebras with principal coefficients at $t_0$.

 \vspace{5mm}

\section{On $g$-vectors of cluster algebras with principal coefficients}
In this subsection, we give some results  on $g$-vectors for cluster algebras with principal coefficients, which will be  used in the sequel.

\begin{Theorem}\label{detgthm}
Let $\mathcal A(\mathcal S)$ be a skew-symmetrizable cluster algebra with principal coefficients at $t_0$, and ${\bf x}_t, {\bf x}_{t^{\prime}}$ be any two clusters of $\mathcal A(\mathcal S)$, then

(i). The Laurent expansion of  $x_{j;t^{\prime}}$ with respect to ${\bf x}_t$ has the following form

 $$x_{j;t^{\prime}}={\bf x}_{t}^{{\bf r}_{j;t}^{t^{\prime}}}(1+\sum\limits_{0\neq {\bf v}\in\mathbb N^n,~~{\bf u}\in\mathbb Z^n}c_{\bf v}{\bf y}^{\bf v}{\bf x}_t^{\bf u}),$$
where $c_{\bf v}\geq 0$ and ${\bf r}_{j;t}^{t^{\prime}}$ satisfies $g(x_{j;t^{\prime}})=G_t{\bf r}_{j;t}^{t^{\prime}}$.

(ii) $G_{t^{\prime}}=G_tR_{t}^{t^{\prime}}$, where $R_{t}^{t^{\prime}}=({\bf r}_{1;t}^{t^{\prime}},\cdots,{\bf r}_{n;t}^{t^{\prime}})$ . In particular, $det G_t=\pm1$.

\end{Theorem}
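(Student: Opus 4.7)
The plan is to combine the positive Laurent phenomenon with the $\mathbb Z^n$-homogeneity of cluster variables, and then identify the leading term of the expansion by specializing ${\bf y}\mapsto{\bf 0}$.

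First, Theorem~\ref{thmLP}(iii) writes $x_{j;t^{\prime}}$ as an $\mathbb N$-linear combination $\sum c_{{\bf u},{\bf v}}\,{\bf x}_t^{\bf u}{\bf y}^{\bf v}$ of Laurent monomials in ${\bf x}_t$ times monomials in ${\bf y}$. Since $x_{j;t^{\prime}}$ is homogeneous of degree $g(x_{j;t^{\prime}})$ for the $\mathbb Z^n$-grading recalled just before Theorem~\ref{thmfg}, and each term ${\bf x}_t^{\bf u}{\bf y}^{\bf v}$ has degree $G_t{\bf u}-B_{t_0}{\bf v}$, every non-zero term satisfies
$$G_t{\bf u}-B_{t_0}{\bf v}=g(x_{j;t^{\prime}});$$
in particular the contributions with ${\bf v}={\bf 0}$ all satisfy $G_t{\bf u}=g(x_{j;t^{\prime}})$.

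Next, specialize ${\bf y}\mapsto{\bf 0}$. Applying Theorem~\ref{thmfg}(ii) to $x_{j;t^{\prime}}$ and to each $x_{i;t}$ yields $x_{j;t^{\prime}}|_{{\bf y}=0}={\bf x}_{t_0}^{g(x_{j;t^{\prime}})}$ and $x_{i;t}|_{{\bf y}=0}={\bf x}_{t_0}^{g(x_{i;t})}$, the latter being Laurent units. Hence the specialization extends to the localization of $\mathbb Z[{\bf y}][{\bf x}_{t_0}^{\pm 1}]$ inverting each $x_{i;t}$, and only the ${\bf v}={\bf 0}$ terms survive, producing
$${\bf x}_{t_0}^{g(x_{j;t^{\prime}})}=\sum_{\bf u}c_{{\bf u},{\bf 0}}\,{\bf x}_{t_0}^{G_t{\bf u}}.$$
By the previous paragraph every monomial on the right equals the monomial on the left, whence $\sum c_{{\bf u},{\bf 0}}=1$; as the coefficients are non-negative integers, exactly one ${\bf u}$ contributes with coefficient $1$. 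Call it ${\bf r}_{j;t}^{t^{\prime}}$; it satisfies $G_t{\bf r}_{j;t}^{t^{\prime}}=g(x_{j;t^{\prime}})$, and factoring ${\bf x}_t^{{\bf r}_{j;t}^{t^{\prime}}}$ out of the Laurent expansion produces the shape claimed in (i). The non-negativity ${\bf v}\in\mathbb N^n$ of the remaining $y$-exponents then follows from $x_{j;t^{\prime}}\in\mathbb Z[{\bf y}][{\bf x}_{t_0}^{\pm 1}]$ (Theorem~\ref{thmLP}(ii)) together with positivity.

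For (ii), assembling $g(x_{j;t^{\prime}})=G_t{\bf r}_{j;t}^{t^{\prime}}$ across $j=1,\dots,n$ yields $G_{t^{\prime}}=G_tR_t^{t^{\prime}}$; taking $t^{\prime}=t_0$ and using $G_{t_0}=I_n$ forces $G_tR_t^{t_0}=I_n$, so $\det G_t\cdot\det R_t^{t_0}=1$ with both determinants integers, giving $\det G_t=\pm 1$. The main obstacle will be the rigorous handling of the ${\bf y}={\bf 0}$ substitution on the ${\bf x}_t$-expansion --- one must pass to a localization of $\mathbb Z[{\bf y}][{\bf x}_{t_0}^{\pm 1}]$ inverting each $x_{i;t}$ and verify that the specialization there agrees with the direct ${\bf y}={\bf 0}$ evaluation coming from $x_{j;t^{\prime}}\in\mathbb Z[{\bf y}][{\bf x}_{t_0}^{\pm 1}]$.
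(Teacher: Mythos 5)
Your proposal is correct and follows essentially the same route as the paper: both expand $x_{j;t^{\prime}}$ in ${\bf x}_t$ via the positive (sharpened) Laurent phenomenon, specialize ${\bf y}\mapsto 0$ using Theorem \ref{thmfg}(ii) to see that the ${\bf v}={\bf 0}$ part collapses to a single monomial ${\bf x}_t^{{\bf r}_{j;t}^{t^{\prime}}}$ with coefficient $1$ and $G_t{\bf r}_{j;t}^{t^{\prime}}=g(x_{j;t^{\prime}})$, and then obtain (ii) by assembling these relations and taking $t^{\prime}=t_0$. The additional $\mathbb Z^n$-homogeneity bookkeeping you include is harmless but not needed, since the relation $G_t{\bf u}=g(x_{j;t^{\prime}})$ for the surviving term is read off directly from the specialized identity exactly as in the paper.
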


\begin{proof}
By  the positivity of  Laurent phenomenon, the Laurent expansion of  $x_{j;t^{\prime}}$ with respect to ${\bf x}_t$ has the form of

$$x_{j;t^{\prime}}=\sum\limits_{p\in P}\lambda_{p,j}{\bf x}_{t}^{{\bf r}_{p,j;t}^{t^\prime}}+\sum\limits_{0\neq {\bf v}\in\mathbb N^n,~~{\bf u}\in\mathbb Z^n}c_{\bf v}{\bf y}^{\bf v}{\bf x}_t^{\bf u},$$
where $\lambda_{p,j}>0$ for $p\in P$, and $c_{\bf v}\geq0$.
Thanks to Theorem \ref{thmfg} (ii), we have that
\begin{eqnarray}
{\bf x}_{t_0}^{g(x_{j;t^\prime})}=x_{j;t^{\prime}}|_{y_1=\cdots=y_n=0}=\sum\limits_{p\in P}\lambda_{p,j}{\bf x}_{t}^{{\bf r}_{p,j;t}^{t^\prime}}|_{y_1=\cdots=y_n=0}=
\sum\limits_{p\in P}\lambda_{p,j}{\bf x}_{t_0}^{G_t{\bf r}_{p,j;t}^{t^\prime}}, \lambda_{p,j}> 0.\nonumber
\end{eqnarray}
The above equality holds if and only if $P$ has exactly one element (say $p_0$), and
\begin{eqnarray}\label{coreqa}
\lambda_{p_0,j}=1,\; g(x_{j;t^\prime})=G_t{\bf r}_{p_0,j;t}^{t^\prime}.\nonumber
\end{eqnarray}
So we can assume that the  expansion of  $x_{j;t^{\prime}}$ with respect to ${\bf x}_t$ is in the following form
$$x_{j;t^{\prime}}={\bf x}_{t}^{{\bf r}_{j;t}^{t^\prime}}+\sum\limits_{0\neq {\bf v}\in\mathbb N^n,~~{\bf u}\in\mathbb Z^n}c_{\bf v}{\bf y}^{\bf v}{\bf x}_t^{\bf u},\text{ with }c_{\bf v}\geq 0,$$
i.e., $x_{i;t^{\prime}}$ has the following form $$x_{j;t^{\prime}}={\bf x}_{t}^{{\bf r}_{j;t}^{t^{\prime}}}(1+\sum\limits_{0\neq {\bf v}\in\mathbb N^n,~~{\bf u}\in\mathbb Z^n}c_{\bf v}{\bf y}^{\bf v}{\bf x}_t^{\bf u}),$$
where $c_{\bf v}\geq 0$ and ${\bf r}_{j;t}^{t^\prime}$ satisfies $g(x_{j;t^\prime})=G_t{\bf r}_{j;t}^{t^\prime}$, $j=1,\cdots,n$.
So we have $G_{t^{\prime}}=G_tR_{t}^{t^{\prime}}$.
Take $t^{\prime}=t_0$, we have $G_tR_{t}^{t_0}=G_{t_0}=I_n$. Because $G_t$ and $R_{t}^{t_0}$ are integer matrices, we get $det G_t=\pm1$.
\end{proof}

The following theorem is  known from \cite{GHKK} for skew-symmetrizable cluster algberas. For skew-symmetric cluster algebras, one can also refer to \cite{DWZ}. Here we would like to provide an elementary proof for it. This new proof  depends only on the positivity of cluster variables and  the property that each $F$-polynomial has constant term $1$, i.e., Theorem \ref{thmfg} (ii), which is equivalent to the sign-coherence of $C$-matrices by  \cite[Proposition 5.6]{FZ3}.

\begin{Theorem}\label{thmmonomial}
Let $\mathcal A(\mathcal S)$ be a skew-symmetrizable cluster algebra with principal coefficients at $t_0$, and ${\bf x}_{t_1}^{\bf a}, {\bf x}_{t_2}^{\bf f}$ be  two cluster monomials of $\mathcal A(\mathcal S)$ . If ${\bf x}_{t_1}^{\bf a}$ and ${\bf x}_{t_2}^{\bf f}$ have the same $g$-vector, i.e., $G_{t_1}{\bf a}=G_{t_2}{\bf f}$, then ${\bf x}_{t_1}^{\bf a}={\bf x}_{t_2}^{\bf f}$.

\end{Theorem}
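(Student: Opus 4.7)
The plan is to leverage the positive Laurent expansion of Theorem \ref{detgthm}(i) applied symmetrically to ${\bf x}_{t_1}^{\bf a}$ and ${\bf x}_{t_2}^{\bf f}$, and to conclude through a pincer positivity argument on the positive real orthant.

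First I extract a linear-algebraic identity from the hypothesis. Theorem \ref{detgthm}(ii) gives $G_{t_2}=G_{t_1}R_{t_1}^{t_2}$ together with $\det G_{t_1}=\pm 1$, so $G_{t_1}{\bf a}=G_{t_2}{\bf f}$ forces ${\bf a}=R_{t_1}^{t_2}{\bf f}$. Swapping the roles of $t_1$ and $t_2$ likewise gives ${\bf f}=R_{t_2}^{t_1}{\bf a}$.

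Next I expand ${\bf x}_{t_2}^{\bf f}$ in the cluster ${\bf x}_{t_1}$. Applied to each factor, Theorem \ref{detgthm}(i) yields $x_{j;t_2}={\bf x}_{t_1}^{{\bf r}_{j;t_1}^{t_2}}(1+P_j)$, where $P_j$ is a nonnegative integer combination of Laurent monomials ${\bf y}^{\bf v}{\bf x}_{t_1}^{\bf u}$ with ${\bf v}\neq 0$. Taking the $f_j$-th power, multiplying over $j$, and using ${\bf a}=R_{t_1}^{t_2}{\bf f}$,
$${\bf x}_{t_2}^{\bf f}={\bf x}_{t_1}^{\bf a}\prod_{j=1}^{n}(1+P_j)^{f_j}.$$
Specializing $y_1,\ldots,y_n$ and $x_{1;t_0},\ldots,x_{n;t_0}$ to arbitrary positive real numbers, Theorem \ref{thmLP}(iii) makes every $x_{i;t_1}$ positive; hence each $P_j\geq 0$, and since $f_j\in\mathbb N$, each $(1+P_j)^{f_j}\geq 1$. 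This yields ${\bf x}_{t_2}^{\bf f}\geq {\bf x}_{t_1}^{\bf a}$ pointwise on the positive real orthant.

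The completely symmetric expansion of ${\bf x}_{t_1}^{\bf a}$ in the cluster ${\bf x}_{t_2}$, using ${\bf f}=R_{t_2}^{t_1}{\bf a}$, produces ${\bf x}_{t_1}^{\bf a}\geq {\bf x}_{t_2}^{\bf f}$ on the same open set. Two rational functions in $y_1,\ldots,y_n,x_{1;t_0},\ldots,x_{n;t_0}$ that agree on a Euclidean-open subset of $\mathbb R^{2n}$ must coincide in $\mathcal F$, so ${\bf x}_{t_1}^{\bf a}={\bf x}_{t_2}^{\bf f}$. I do not anticipate a genuine obstacle here; the only delicate point is the use of ${\bf a},{\bf f}\in\mathbb N^n$ (the cluster-monomial hypothesis), which is what allows the inequalities $(1+P_j)^{f_j}\geq 1$ to be preserved under raising to the exponents.
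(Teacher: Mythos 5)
Your proposal is correct, and its first half coincides with the paper's: both proofs expand one cluster monomial in the other cluster via Theorem \ref{detgthm}(i), identify the leading exponent using unimodularity of the $G$-matrices (you do this by inverting $G_{t_1}$ in ${\bf a}=R_{t_1}^{t_2}{\bf f}$; the paper does it by specializing $y_1=\cdots=y_n=0$ and invoking Theorem \ref{thmfg}(ii), which amounts to the same computation), and then exploit the symmetry between $t_1$ and $t_2$. Where you genuinely diverge is in the finishing step. The paper adds the two expansions to obtain an identity $\Sigma_1+\Sigma_2=0$ in $\mathcal F$, rewrites each $\Sigma_i$ as a quotient $f_i/g_i$ of polynomials with nonnegative coefficients, and concludes $f_1g_2+f_2g_1=0$ forces $f_1=f_2=0$ --- a purely formal positivity argument inside the polynomial ring. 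You instead evaluate at positive real points, use the positive Laurent phenomenon to get $x_{i;t_1},x_{i;t_2}>0$ there, squeeze ${\bf x}_{t_1}^{\bf a}\le{\bf x}_{t_2}^{\bf f}\le{\bf x}_{t_1}^{\bf a}$ on the open positive orthant, and finish with the identity theorem for rational functions. Both mechanisms rest on exactly the same positivity inputs (nonnegativity of the $c_{\bf v}$ and of the Laurent coefficients), and both correctly use ${\bf a},{\bf f}\in\mathbb N^n$ to keep the correction factors $\ge 1$; your evaluation argument is arguably slicker and avoids the bookkeeping with common denominators, at the mild cost of stepping outside formal algebra (which is harmless here since $\mathbb{ZP}=\mathbb Z[y_1^{\pm1},\dots,y_n^{\pm1}]$, so evaluation at positive reals is legitimate). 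I see no gap.
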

\begin{proof}
By Theorem \ref{detgthm} (i),  the Laurent expansion of the cluster monomial  ${\bf x}_{t_1}^{\bf a}$ with respect to ${\bf x}_{t_2}$  has the following form
\begin{equation}\label{equ12}
{\bf x}_{t_1}^{\bf a}={\bf x}_{t_2}^{{\bf b}}(1+\sum\limits_{0\neq {\bf v}\in\mathbb N^n,~~{\bf u}\in\mathbb Z^n}c_{\bf v}{\bf y}^{\bf v}{\bf x}_{t_2}^{\bf u}),\;\;c_{\bf v}\geq 0.
\end{equation}
Thanks to Theorem \ref{thmfg} (ii), we obtain that
\begin{eqnarray}
{\bf x}_{t_0}^{G_{t_1}{\bf a}}={\bf x}_{t_1}^{\bf a}|_{y_1=\cdots y_n=0}&=&\left({\bf x}_{t_2}^{{\bf b}}(1+\sum\limits_{0\neq {\bf v}\in\mathbb N^n,~~{\bf u}\in\mathbb Z^n}c_{\bf v}{\bf y}^{\bf v}{\bf x}_{t_2}^{\bf u})\right)|_{y_1=\cdots=y_n=0}\nonumber\\
&=&{\bf x}_{t_0}^{G_{t_2}{\bf b}}(1+\sum\limits_{0\neq {\bf v}\in\mathbb N^n,~~{\bf u}\in\mathbb Z^n}c_{\bf v}{\bf y}^{\bf v}{\bf x}_{t_0}^{G_{t_2}{\bf u}})|_{y_1=\cdots=y_n=0}\nonumber\\
&=&{\bf x}_{t_0}^{G_{t_2}{\bf b}}.\nonumber
\end{eqnarray}
Thus $G_{t_2}{\bf b}=G_{t_1}{\bf a}=G_{t_2}{\bf f}$, which implies ${\bf b}={\bf f}$, by Theorem \ref{detgthm} (ii).
 Then by (\ref{equ12}),  ${\bf x}_{t_1}^{\bf a}$ can be written as follows:
\begin{eqnarray}\label{adeqn}
{\bf x}_{t_1}^{\bf a}={\bf x}_{t_2}^{{\bf f}}+\sum\limits_{0\neq {\bf v_1}\in\mathbb N^n,~~{\bf u_1}\in\mathbb Z^n}c_{\bf v_1}{\bf y}^{\bf v_1}{\bf x}_{t_2}^{\bf u_1},\;\;\text{with}\;\; c_{\bf v_1}\geq 0.
\end{eqnarray}
Similarly, the Laurent expansion of the cluster monomial  ${\bf x}_{t_2}^{\bf f}$ with respect to ${\bf x}_{t_1}$  has the following form
$${\bf x}_{t_2}^{\bf f}={\bf x}_{t_1}^{{\bf a}}+\sum\limits_{0\neq {\bf v_2}\in\mathbb N^n,~~{\bf u_2}\in\mathbb Z^n}c_{\bf v_2}{\bf y}^{\bf v_2}{\bf x}_{t_1}^{\bf u_2},\;\;\text{with}\;\; c_{\bf v_2}\geq 0.$$
Thus we  obtain
$$\sum\limits_{0\neq {\bf v_1}\in\mathbb N^n,~~{\bf u_1}\in\mathbb Z^n}c_{\bf v_1}{\bf y}^{\bf v_1}{\bf x}_{t_2}^{\bf u_1}+\sum\limits_{0\neq {\bf v_2}\in\mathbb N^n,~~{\bf u_2}\in\mathbb Z^n}c_{\bf v_2}{\bf y}^{\bf v_2}{\bf x}_{t_1}^{\bf u_2}=0.$$
Clearly, $\sum\limits_{0\neq {\bf v_2}\in\mathbb N^n,~~{\bf u_2}\in\mathbb Z^n}c_{\bf v_2}{\bf y}^{\bf v_2}{\bf x}_{t_1}^{\bf u_2}$ can be written as follows:
$$\sum\limits_{0\neq {\bf v_2}\in\mathbb N^n,~~{\bf u_2}\in\mathbb Z^n}c_{\bf v_2}{\bf y}^{\bf v_2}{\bf x}_{t_1}^{\bf u_2}=\frac{f_1}{g_1},$$
 where $f_1,g_1\in\mathbb Z_{\geq 0}[y_1,\cdots,y_n,x_{1;t_1},\cdots,x_{n;t_1}]$.
By positive Laurent phenomenon,  ${\bf x}_{t_2}^{\bf u_1}$ can be written as a quotient of two polynomials in $\mathbb Z_{\geq 0}[y_1,\cdots,y_n,x_{1;t_1},\cdots,x_{n;t_1}]$. Because $c_{\bf v_1}\geq 0$,  we can write  $\sum\limits_{0\neq {\bf v_1}\in\mathbb N^n,~~{\bf u_1}\in\mathbb Z^n}c_{\bf v_1}{\bf y}^{\bf v_1}{\bf x}_{t_2}^{\bf u_1}$ in the following form:
$$\sum\limits_{0\neq {\bf v_1}\in\mathbb N^n,~~{\bf u_1}\in\mathbb Z^n}c_{\bf v_1}{\bf y}^{\bf v_1}{\bf x}_{t_2}^{\bf u_1}=\frac{f_2}{g_2},$$
where $f_2,g_2\in\mathbb Z_{\geq0}[y_1,\cdots,y_n,x_{1;t_1},\cdots,x_{n;t_1}]$.
Thus
$$\sum\limits_{0\neq {\bf v_1}\in\mathbb N^n,~~{\bf u_1}\in\mathbb Z^n}c_{\bf v_1}{\bf y}^{\bf v_1}{\bf x}_{t_2}^{\bf u_1}+\sum\limits_{0\neq {\bf v_2}\in\mathbb N^n,~~{\bf u_2}\in\mathbb Z^n}c_{\bf v_2}{\bf y}^{\bf v_2}{\bf x}_{t_1}^{\bf u_2}=\frac{f_2}{g_2}+\frac{f_1}{g_1}=0,$$
then we obtain $f_1g_2+f_2g_1=0$, where $f_1,g_1,f_2,g_2\in\mathbb Z_{\geq0}[y_1,\cdots,y_n,x_{1;t_1},\cdots,x_{n;t_1}]$ and $g_1\neq0,g_2\neq0$.
So, we have $f_2=0=f_1$, i.e., $$\sum\limits_{0\neq {\bf v_1}\in\mathbb N^n,~~{\bf u_1}\in\mathbb Z^n}c_{\bf v_1}{\bf y}^{\bf v_1}{\bf x}_{t_2}^{\bf u_1}=0=\sum\limits_{0\neq {\bf v_2}\in\mathbb N^n,~~{\bf u_2}\in\mathbb Z^n}c_{\bf v_2}{\bf y}^{\bf v_2}{\bf x}_{t_1}^{\bf u_2},$$
Then, by (\ref{adeqn}), we obtain that ${\bf x}_{t_1}^{\bf a}={\bf x}_{t_2}^{\bf f}$.
\end{proof}

\vspace{5mm}

\section{Scattering diagrams and the enough $g$-pairs property}

In this section, we will recall some results in \cite{M,GHKK}  on scattering diagrams of cluster algebras, then we use these results to show that any skew-symmetrizable cluster algebra $\mathcal A(\mathcal S)$  with principal coefficients at $t_0$ has the enough $g$-pairs property. In addition, we explain why the sign-coherence of $G$-matrices is a direct result of the enough $g$-pairs property.

\subsection {Scattering diagrams}
The scattering diagrams  associated with  skew-symmetrizable matrices are a main tool in \cite{GHKK}, which are used to affirm some important conjectures in cluster algebras. Now we use the reference \cite{M} to review the  scattering diagrams.

Let $R=\mathbb Q[x_1^{\pm1},\cdots,x_n^{\pm1}][[y_1,\cdots,y_n]]$ be the formal power series in the variables $y_1,\cdots,y_n$  with coefficients in $\mathbb Q[x_1^{\pm1},\cdots,x_n^{\pm1}]$. Let $B$ be a skew-symmetrizable integer matrix  and $S=diag(s_1,\cdots,s_n)$ is the skew-symmetrizer of $B$  such that the trace $tr(S)$  is minimal. Clearly, if $B$ is skew-symmetric, then $S=I_n$.

For $0\neq{\bf v}\in\mathbb N^n$, define the formal {\bf elementary transformation} $E_{\bf v}:R\rightarrow R$ by
$$E_{\bf v}({\bf x}^{\bf w})=(1+{\bf x}^{B\bf v}{\bf y}^{\bf v})^{\frac{{\bf v}^{\top}S{\bf w}}{gcd(S{\bf v})}}{\bf x}^{\bf w}, ~~E_{\bf v}({\bf y}^{{\bf w}^{\prime}})={\bf y}^{{\bf w}^{\prime}},$$
  for any ${\bf w}\in\mathbb Z^n$, ${\bf w}^\prime\in\mathbb N^n$. Here, $gcd(S{\bf v})$ means the greatest common divisor of $s_1v_1,\cdots,s_nv_n$.

$E_{\bf v}$ is an automorphism of $R$, with inverse $E_{\bf v}^{-1}$ given by

$$E_{\bf v}^{-1}({\bf x}^{\bf w})=(1+{\bf x}^{B\bf v}{\bf y}^{\bf v})^{-\frac{{\bf v}^{\top}S{\bf w}}{gcd(S{\bf v})}}{\bf x}^{\bf w}, ~~E_{\bf v}^{-1}({\bf y}^{{\bf w}^{\prime}})={\bf y}^{{\bf w}^{\prime}}.$$

For $B$, a {\bf wall} is a pair $({\bf v},W)$, where $0\neq {\bf v}\in \mathbb N^n$ and $W$ is a convex polyhedral cone in $\mathbb R^n$ which spans ${\bf v}^{\bot}:=\{{\bf m}\in \mathbb R^n|{\bf v}^\top S {\bf m}=0\}$. Here $B$ is used to provide the elementary transformation $E_{\bf v}$ for the wall  $({\bf v},W)$. The open half-space $\{{\bf m}\in \mathbb R^n|{\bf v}^\top S {\bf m}>0\}$ is called the {\bf green side of $W$}, and $\{{\bf m}\in \mathbb R^n|{\bf v}^\top S {\bf m}<0\}$ is called  the
{\bf red side of $W$}.

A {\bf scattering diagram} $\mathfrak D$ is a collection of walls in $\mathbb R^n$ for the same matrix $B$. A
smooth path $\rho:[0,1]\rightarrow\mathbb R^n$ in a scattering diagram $\mathfrak D$  is called {\bf finite transverse} if
\begin{itemize}
\item $\rho(0)$ and $\rho(1)$ are not in any walls;
\item whenever the image of $\rho$ intersects a wall $({\bf v},W)$, it crosses $W$ transversely;
\item the image of $\rho$ intersects finitely many walls, and does not intersect the boundary
of a wall or the intersection of two walls which span different hyperplanes.
\end{itemize}

Let $\rho$ be a finite transverse path, and list the walls
crossed by $\rho$ in order: $$({\bf v}_1, W_1),\cdots,({\bf v}_s, W_s).$$  Then $\rho$ determines
the {\bf path-ordered product of elementary transformations} given by
$$E_{{\bf v}_s}^{\epsilon_s}\cdots  E_{{\bf v}_2}^{\epsilon_2}\circ E_{{\bf v}_1}^{\epsilon_1}:R\rightarrow R,$$
where $\epsilon_i=1$ (resp.,  $\epsilon_i=-1$) if $\rho$ crosses $W_i$ from its green side to its red side (resp., from its red side to its green
side).

  A scattering diagram  $\mathfrak D$ with finitely many walls is called a {\bf consistent scattering diagram}  if any path-ordered product defined by finite transverse loop is the trivial automorphism of $R$.
Two scattering diagrams $\mathfrak D_1$ and $\mathfrak D_2$ with finitely many walls are called {\bf equivalent}  if any smooth path $\rho$ which is finite transitive in both  diagrams
determines the same path-ordered product.

For brevity, the definition of  consistency or equivalence for scattering diagrams with infinitely many walls will not be given here. One can refer to \cite{M} for them.

\begin{Theorem}(\cite[Theorem 1.12]{GHKK})
For each  skew-symmetrizable matrix $B$, there is an unique consistent scattering diagram
$\mathfrak D(B)$  up to equivalence, such that
\begin{itemize}
\item for each $i\in\{1,2,\cdots,n\}$, there is a wall of the form $({\bf e}_i,{\bf e}_i^{\bot})$ and
\item every other wall $({\bf v}, W)$ in $\mathfrak D(B)$ has the property that $B{\bf v}\notin W$.
\end{itemize}
\end{Theorem}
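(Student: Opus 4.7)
The plan is to construct $\mathfrak{D}(B)$ order by order in the maximal ideal $\mathfrak{m}=(y_1,\ldots,y_n)\subset R$. First I would pass to the truncated ring $R_k:=R/\mathfrak{m}^{k+1}$, in which every elementary transformation $E_{\bf v}$ with $|{\bf v}|\geq k+1$ reduces to the identity, so a scattering diagram is effectively finite at each order. Call $\mathfrak{D}^{(k)}$ the "order $k$ part", i.e. the sub-collection of walls with $|{\bf v}|\leq k$. The goal is to build a nested sequence $\mathfrak{D}^{(1)}\subseteq\mathfrak{D}^{(2)}\subseteq\cdots$, each consistent on $R_k$ and satisfying the stated conditions, then take their union.

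The base case is $\mathfrak{D}^{(1)}=\{({\bf e}_i,{\bf e}_i^\perp):i=1,\ldots,n\}$. Consistency modulo $\mathfrak{m}^2$ is automatic: any finite transverse loop crosses each wall ${\bf e}_i^\perp$ the same number of times in each direction, and to first order the $E_{{\bf e}_i}$ commute. For the inductive step, assume $\mathfrak{D}^{(k)}$ is consistent on $R_k$, with each non-initial wall $({\bf v},W)$ satisfying $B{\bf v}\notin W$. Let $\rho$ be a small finite transverse loop around a codimension-$2$ intersection of walls in $\mathfrak{D}^{(k)}$; its path-ordered product $T_\rho$ is the identity on $R_k$ but in general differs from the identity on $R_{k+1}$ by a ``defect'' $\log T_\rho$ lying in the degree-$(k+1)$ piece of a graded Lie algebra of derivations of $R$. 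One then decomposes this defect into primitive summands, each supported on a single hyperplane ${\bf v}^\perp$ with $|{\bf v}|=k+1$, and cancels each summand by adjoining a new wall $({\bf v},W)$. The orientation of the wall is forced by the requirement $B{\bf v}\notin W$, i.e. the new wall must be ``outgoing'': this selects exactly one half-hyperplane, pinning down both the wall and its elementary transformation.

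Uniqueness is proved in parallel by induction on $k$. Suppose $\mathfrak{D}'$ is another consistent scattering diagram satisfying the two conditions, and that $\mathfrak{D}^{(k)}$ and $\mathfrak{D}'{}^{(k)}$ are equivalent on $R_k$. Any discrepancy at order $k+1$ would produce, for some hyperplane ${\bf v}^\perp$ with $|{\bf v}|=k+1$, two different wall-systems whose combined path-ordered product around a transverse loop is nontrivial on $R_{k+1}$. But the outgoing constraint $B{\bf v}\notin W$ and the consistency constraint together leave no freedom: the coefficient of $({\bf x}^{B{\bf v}}{\bf y}^{\bf v})$ in the elementary transformation of any additional wall is forced by the order-$k$ data. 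Hence $\mathfrak{D}^{(k+1)}$ and $\mathfrak{D}'{}^{(k+1)}$ are equivalent, and passing to the limit gives equivalence of $\mathfrak{D}(B)$ and $\mathfrak{D}'$.

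The main obstacle I expect is the order-$(k+1)$ decomposition step: one must show that the defect $\log T_\rho$, which a priori is an arbitrary degree-$(k+1)$ derivation of $R$, actually splits as a finite sum of ``wall-type'' derivations supported on hyperplanes ${\bf v}^\perp$, and that cancelling it by adding walls does not introduce new inconsistencies at the next order. This is the technical heart of the construction and relies on the Lie-theoretic lemma that derivations of the appropriate form generate a pronilpotent Lie algebra in which every element has a canonical wall-crossing decomposition; the outgoing condition $B{\bf v}\notin W$ is precisely what makes this decomposition unique.
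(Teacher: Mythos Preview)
The paper does not prove this theorem at all: it is stated with a citation to \cite[Theorem~1.12]{GHKK} and used as a black box in the construction of the scattering diagram $\mathfrak{D}(B)$. There is no ``paper's own proof'' to compare your proposal against.

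That said, your outline is a faithful sketch of the standard Kontsevich--Soibelman/GHKK argument: build the diagram order by order in the $\mathfrak{m}$-adic filtration, starting from the incoming walls $({\bf e}_i,{\bf e}_i^\perp)$, and at each step correct the commutator defects around codimension-$2$ joints by inserting outgoing walls (those with $B{\bf v}\notin W$). Your identification of the technical heart---that the order-$(k+1)$ defect decomposes into wall-type derivations inside a pronilpotent Lie algebra, and that the outgoing condition forces uniqueness of the correction---is accurate. If you intend to actually write this out, the points that need care beyond your sketch are (a) verifying that the elementary transformations $E_{\bf v}$ are exponentials of elements in a Lie algebra graded by $\mathbb{N}^n$ with the correct bracket structure, and (b) showing that the new walls inserted at order $k+1$ do not disturb consistency at orders $\leq k$, which follows because they act trivially on $R_k$. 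But none of this is in the present paper, which simply imports the result.
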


A {\bf chamber} of $\mathfrak D(B)$ is  a path-connected
component of $\mathbb R^n-\mathfrak D(B)$. Let $({\bf v}, W)$ be a wall of $\mathfrak D(B)$.
Since $0\neq {\bf v}\in \mathbb N^n$, we know that $W\cap(\mathbb R_{>0})^n=\phi=W\cap(\mathbb R_{<0})^n$.
Thus no walls can pass through $(\mathbb R_{>0})^n$ and $(\mathbb R_{<0})^n$, they must be chambers in $\mathfrak D(B)$. They are called {\bf all-positive chamber} and {\bf all-negative chamber} respectively.
 A chamber $\mathcal C$ of   $\mathfrak D(B)$ is a {\bf reachable chamber}, if there exists a finite
transverse path $\rho$ from all-positive chamber $(\mathbb R_{>0})^n$ to $\mathcal C$.

\begin{Theorem} (\cite[Lemma 2.10]{GHKK})\label{thmgvectors}
Let  $\mathcal A(\mathcal S)$ be a skew-symmetrizable cluster algebra of rank $n$ with principal coefficients at $t_0$. Then every reachable chamber of $\mathfrak D(B_{t_0})$ is of the form $$\mathbb R_{>0}{\bf g}_1+\cdots+\mathbb R_{>0}{\bf g}_n,$$
where $G=({\bf g}_1,\cdots,{\bf g}_n)$ is a certain $G$-matrix of $\mathcal A(\mathcal S)$.
\end{Theorem}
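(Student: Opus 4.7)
The plan is to argue by induction on the minimal number of walls that a finite transverse path from the all-positive chamber $(\mathbb R_{>0})^n$ to $\mathcal C$ must cross. The base case is immediate: the all-positive chamber itself equals $\mathbb R_{>0}{\bf e}_1+\cdots+\mathbb R_{>0}{\bf e}_n$, and since the initial $G$-matrix is $G_{t_0}=I_n$, the ${\bf e}_i$ are precisely the columns of $G_{t_0}$. So the claim holds with zero wall-crossings.

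For the inductive step, I assume the chamber $\mathcal C$ adjacent to $\mathcal C'$ (with one fewer wall-crossing) is already known to have the form $\mathbb R_{>0}{\bf g}_1+\cdots+\mathbb R_{>0}{\bf g}_n$ where $G_t=({\bf g}_1,\ldots,{\bf g}_n)$ is the $G$-matrix of some reachable cluster ${\bf x}_t$. The common facet of $\mathcal C$ and $\mathcal C'$ lies inside some wall $({\bf v},W)$ of $\mathfrak D(B_{t_0})$; since $\mathcal C$ is simplicial, this facet is the cone spanned by $({\bf g}_i)_{i\neq k}$ for some index $k$. I would then show that crossing this wall produces the chamber $\mathcal C'=\mathbb R_{>0}{\bf g}'_1+\cdots+\mathbb R_{>0}{\bf g}'_n$, where $({\bf g}'_1,\ldots,{\bf g}'_n)=G_{t'}$ is the $G$-matrix of the mutated cluster ${\bf x}_{t'}=\mu_k({\bf x}_t)$. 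The key input is the Nakanishi--Zelevinsky mutation rule for $G$-vectors, which gives ${\bf g}'_i={\bf g}_i$ for $i\neq k$ and an explicit formula for ${\bf g}'_k$ in terms of $G_t$ and the $k$-th column of $B_t$; combined with sign-coherence (Theorem \ref{thmfg}), this formula matches the image of $-{\bf g}_k$ (which points into the correct adjacent chamber across the facet $\sum_{i\neq k}\mathbb R_{\geq 0}{\bf g}_i$) under the elementary transformation $E_{\bf v}^{\pm 1}$ attached to the wall.

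To tie this to the scattering diagram, I would use the consistency of $\mathfrak D(B_{t_0})$ to check that the path-ordered product obtained by going from $(\mathbb R_{>0})^n$ into $\mathcal C$ and then into $\mathcal C'$ agrees with the Laurent expansions in ${\bf x}_{t_0}$ of the cluster variables of ${\bf x}_{t'}$. Concretely, the elementary transformation $E_{\bf v}^{\pm 1}$ transforms the monomial ${\bf x}_{t_0}^{-{\bf g}_k}$ into precisely the ${\bf x}_{t_0}$-expansion of $x_{k;t'}$ at $y_1=\cdots=y_n=0$ (this is essentially Theorem \ref{thmfg}(ii) applied across a single mutation). Once one knows the columns of $G_{t'}$ are exactly the extremal rays of $\mathcal C'$, the inductive step is complete.

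The main obstacle is the second paragraph: identifying the wall being crossed, checking that ${\bf v}$ is parallel to the $k$-th column of $B_t$ (after change of basis to the cluster chamber $\mathcal C$), and verifying that the elementary transformation $E_{\bf v}^{\pm 1}$ attached to the wall reproduces exactly the Nakanishi--Zelevinsky $G$-vector mutation. This boils down to showing that the scattering diagram $\mathfrak D(B_t)$, obtained from $\mathfrak D(B_{t_0})$ by the linear change of basis $G_t$, has its own outgoing walls $({\bf e}_i,{\bf e}_i^\perp)$ aligned with the facets of $\mathcal C$. One would use the compatibility of the scattering diagram construction with mutation (a key feature of \cite{GHKK} and \cite{M}), which says that mutating the initial seed corresponds to applying a single elementary transformation to the scattering diagram; sign-coherence guarantees that at most one of the two possible elementary transformations arises, which is precisely what makes the induction go through.
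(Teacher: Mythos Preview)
The paper does not give its own proof of this statement; Theorem~\ref{thmgvectors} is quoted from \cite[Lemma 2.10]{GHKK} and used as a black box in the proof of Theorem~\ref{thmenough}. There is therefore nothing in the paper to compare your proposal against.

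That said, your outline is essentially the argument one finds in \cite{GHKK}: induct on the number of wall-crossings from the all-positive chamber, identify the base case with $G_{t_0}=I_n$, and for the inductive step show that crossing a single wall corresponds to a single mutation of the $G$-matrix via the Nakanishi--Zelevinsky formula together with sign-coherence. Your identification of the ``main obstacle'' is accurate: the nontrivial content is that the wall separating two adjacent reachable chambers carries exactly the elementary transformation implementing the relevant cluster mutation, and that only one of the two candidate transformations occurs (this is where sign-coherence enters). In \cite{GHKK} this is handled via their mutation-invariance of the cluster scattering diagram (their Theorem~1.24 and the surrounding chamber-structure discussion), which is precisely the ``compatibility of the scattering diagram construction with mutation'' you invoke at the end. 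So your sketch is on the right track, but the substantive work you defer to the last paragraph is the entire proof; without it you have only restated what needs to be shown.
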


  For $I=\{i_1,\cdots,i_p\}\subseteq\{1,\cdots,n\}$, we assume that $i_1<i_2<\cdots<i_p$. Let $\pi_I: \mathbb R^n\rightarrow \mathbb R^{|I|}=\mathbb R^{p}$ be the  canonical projection given by
$\pi_I({\bf m})=(m_{i_1},\cdots,m_{i_p})^{\top}$, for ${\bf m}=(m_1,\cdots,m_n)^{\top}\in\mathbb R^n$.
 Let $\pi_I^{\top}: \mathbb R^{|I|}=\mathbb R^{p}\rightarrow \mathbb R^n$ be the coordinate inclusion given by $$\pi_I^{\top}({\bf v})_i=\begin{cases} v_{k},&\text{if }i =i_k\in I;\\0,& \text{otherwise},\end{cases}$$
 for ${\bf v}=(v_1,\cdots,v_p)^{\top}\in\mathbb R^p$.

Let $I$ be a subset of $\{1,2,\cdots,n\}$, and $B^\dag$ be the principal submatrix  of $B$ defined by $I$. Recall that, in \cite{M}, the pull back of the scattering diagram $\mathfrak D(B^\dag)$ is the scattering diagram
$$\pi_I^{\ast}\mathfrak D(B^\dag)=\{(\pi_I^{\top}( {\bf v}), \pi_I^{-1}(W))|({\bf v},W)\in \mathfrak D(B^\dag)\}.$$

In \cite[Theorem 33]{M}, Muller has described how to obtained the scattering diagram $\pi_I^{\ast}\mathfrak D(B^\dag)$ from the scattering diagram $\mathfrak D(B)$ for skew-symmetric case. In fact, this description can be naturally extended to skew-symmetrizable  version in the paralleled method, which means the following theorem.
\begin{Theorem}\label{thmpullback}
 For each  skew-symmetrizable matrix $B$, the scattering diagram $\pi_I^{\ast}\mathfrak D(B^\dag)$
is obtained from $\mathfrak D(B)$ by removing all of its walls $({\bf v}, W)$, where ${\bf v}=(v_1,\cdots,v_n)$ has any nonzero entry $v_i$ with $i\notin I$.
\end{Theorem}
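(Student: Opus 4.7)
The plan is to follow Muller's argument in the proof of \cite[Theorem 33]{M} verbatim, keeping track of the extra skew-symmetrizer $S$ appearing in the wall-crossing automorphisms. More precisely, the strategy is to compare two consistent scattering diagrams in $\mathbb R^n$: on the one hand, the subdiagram $\mathfrak D_I \subseteq \mathfrak D(B)$ obtained by discarding every wall $({\bf v}, W)$ with $v_i \neq 0$ for some $i \notin I$, and on the other hand the pullback $\pi_I^{\ast}\mathfrak D(B^\dag)$. If both turn out to be consistent scattering diagrams containing exactly the incoming walls $({\bf e}_i, {\bf e}_i^{\bot})$ for $i \in I$ among their ``initial'' walls, then the uniqueness portion of the GHKK existence theorem identifies them up to equivalence, which is exactly the desired conclusion.

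First I would verify that $\pi_I^{\ast}\mathfrak D(B^\dag)$ is genuinely a consistent scattering diagram for $B$ in $\mathbb R^n$. The natural skew-symmetrizer of $B^\dag$ is the principal submatrix $S^\dag = \pi_I S \pi_I^{\top}$ of $S$, and the key compatibilities are the identities $(\pi_I^{\top}{\bf v})^{\top} S {\bf w} = {\bf v}^{\top} S^\dag \pi_I({\bf w})$ and $\pi_I(B\pi_I^{\top}{\bf v}) = B^\dag {\bf v}$. Together these ensure that the elementary transformation of $R$ attached to a pulled-back wall acts on ${\bf x}^{\bf w}$ in the same way that the $B^\dag$-transformation acts on $\pi_I({\bf w})$ after extension by the ambient $y$-monomials. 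Consistency of $\pi_I^{\ast}\mathfrak D(B^\dag)$ then follows from consistency of $\mathfrak D(B^\dag)$, since any finite transverse loop in the pullback projects to a finite transverse loop in $\mathfrak D(B^\dag)$ and the path-ordered products commute with pullback.

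Next I would show that $\mathfrak D_I$ is itself a consistent scattering diagram by invoking the closure-under-scattering property of $I$-supported walls: when two walls with directions ${\bf v}, {\bf v}' \in \mathbb N^n$ supported on $I$ interact, the perturbations forced by consistency contribute only new walls whose directions lie in the $\mathbb N$-span of $\{{\bf v}, {\bf v}'\}$, hence are again supported on $I$. Therefore the $I$-supported walls of $\mathfrak D(B)$ form a consistent sub-scattering-diagram in their own right, independent of the walls we discarded. Combined with the previous paragraph and uniqueness, this yields $\mathfrak D_I \simeq \pi_I^{\ast}\mathfrak D(B^\dag)$ up to equivalence. The main obstacle I anticipate is purely bookkeeping: one must track the normalization factor $\frac{{\bf v}^{\top} S {\bf w}}{\gcd(S{\bf v})}$ in the definition of $E_{\bf v}$ and confirm that it behaves correctly under the passage from $(S,B)$ to $(S^\dag, B^\dag)$, because the minimality-of-trace condition on $S$ need not survive restriction to a principal submatrix. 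This difficulty can be bypassed either by checking that any discrepancy only rescales a wall's direction vector by a positive integer (which is absorbed by the equivalence relation on scattering diagrams), or by adopting a normalization-free formulation of $E_{\bf v}$ as in \cite{GHKK}, under which the pullback identity for elementary transformations becomes essentially tautological.
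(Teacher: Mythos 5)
Your proposal is correct and follows the same route the paper intends: the paper offers no written proof of Theorem \ref{thmpullback}, merely asserting that Muller's description in \cite[Theorem 33]{M} ``can be naturally extended'' to the skew-symmetrizable case ``in the paralleled method,'' and your sketch is precisely that extension --- consistency of the pullback $\pi_I^{\ast}\mathfrak D(B^\dag)$, closure of the $I$-supported walls of $\mathfrak D(B)$ under scattering, and identification of the two via the uniqueness clause of the GHKK existence theorem. You in fact supply more detail than the paper does, in particular the compatibility identities $(\pi_I^{\top}{\bf v})^{\top}S{\bf w}={\bf v}^{\top}S^\dag\pi_I({\bf w})$ and $\pi_I(B\pi_I^{\top}{\bf v})=B^\dag{\bf v}$ and the observation that the $\gcd(S{\bf v})$ normalization must be checked when passing to the principal submatrix, which is exactly the point where the skew-symmetrizable case differs from Muller's skew-symmetric one.
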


\subsection{The enough $g$-pairs property}

Let $I$ be a subset of $\{1,\cdots,n\}$. We say that $(k_1,\cdots,k_s)$ is an {\bf $I$-sequence}, if $k_j\in I$ for $j=1,\cdots,s$.

\begin{Definition}
Let $\mathcal A(\mathcal S)$ be a skew-symmetrizable cluster algebra of rank $n$ with initial seed at $t_0$, and  $I=\{i_1,\cdots,i_p\}$ be a subset of $\{1,2,\cdots,n\}$.

(i) We say that a seed $({\bf x}_t,{\bf y}_t, B_t)$  of $\mathcal A(\mathcal S)$ is {\bf connected with $({\bf x}_{t_0}, {\bf y}_{t_0}, B_{t_0})$ by an $I$-sequence}, if there exists an $I$-sequence $(k_1,\cdots, k_s)$ such that $$({\bf x}_t,{\bf y}_t, B_t)=\mu_{k_s}\cdots\mu_{k_2}\mu_{k_1}({\bf x}_{t_0}, {\bf y}_{t_0}, B_{t_0}).$$

(ii) We say that a cluster ${\bf x}_t$ of $\mathcal A(\mathcal S)$ is {\bf connected with ${\bf x}_{t_0}$ by an $I$-sequence}, if there exists a seed containing the cluster ${\bf x}_t$ such that this seed is connected with
a seed containing the cluster ${\bf x}_{t_0}$ by an $I$-sequence.
\end{Definition}
Clearly, if the cluster ${\bf x}_t$ is connected with ${\bf x}_{t_0}$ by an $I$-sequence, then $x_{i;t}=x_{i;t_0}$ for $i\notin I$.

\begin{Definition}\label{maindef}
Let $\mathcal A(\mathcal S)$ be a skew-symmetrizable cluster algebra of rank $n$ with principal coefficients at $t_0$, and  $I$ be a subset of $\{1,\cdots,n\}$.

(i).  For two clusters ${\bf x}_t,{\bf x}_{t^\prime}$ of $\mathcal A(\mathcal S)$, the pair $({\bf x}_t,{\bf x}_{t^\prime})$ is called a {\bf $g$-pair along $I$}, if it satisfies the following  conditions:
\begin{itemize}
\item ${\bf x}_{t^\prime}$ is connected with ${\bf x}_{t_0}$ by an $I$-sequence and

\item for any cluster monomial ${\bf x}_t^{\bf v}$ in ${\bf x}_t$, there exists a cluster monomial ${\bf x}_{t^{\prime}}^{{\bf v}^{\prime}}$ in ${\bf x}_{t^{\prime}}$ with $v_i^{\prime}=0$ for $i\notin I$
  such that
$$\pi_I(g({\bf x}_t^{\bf v}))=\pi_I(g({\bf x}_{t^{\prime}}^{{\bf v}^{\prime}})),$$
where $g({\bf x}_t^{\bf v})$ and $g({\bf x}_{t^{\prime}}^{{\bf v}^{\prime}})$ are $g$-vectors of the cluster monomials
${\bf x}_t^{\bf v}$ and ${\bf x}_{t^{\prime}}^{{\bf v}^{\prime}}$ respectively.
\end{itemize}

(ii). $\mathcal A(\mathcal S)$ is said to have  {\bf the enough $g$-pairs property} if for  any subset $I$ of $\{1,\cdots,n\}$ and any cluster ${\bf x}_t$ of $\mathcal A(\mathcal S)$, there exists a cluster ${\bf x}_{t^\prime}$ such that $({\bf x}_t,{\bf x}_{t^\prime})$ is  a $g$-pair along $I$.
\end{Definition}

 In next proposition  we will provide another understanding of the $g$-pair along $I$, which will be used to reveal the connection between the enough $g$-pairs property and the sign-coherence of $G$-matrices.

Let $A=(a_{ij})$ be an $n\times n$ matrix, and $I$ be a subset of $[1,n]:=\{1,\cdots,n\}$ and denote by $A|_{I\times I}=(a_{ij})_{i,j\in I}$ and $A|_{I\times [1,n]}=(a_{i,j})_{i\in I,j\in [1,n]}$.
\begin{Proposition}\label{prodef}
Assume that $\mathcal A(\mathcal S)$ is a skew-symmetrizable cluster algebra of rank $n$ with principal coefficients at $t_0$. Let  ${\bf x}_t,{\bf x}_{t^\prime}$ be two clusters of $\mathcal A(\mathcal S)$, and $G_t, G_{t^\prime}$
 be their $G$-matrices respectively.
  Then  $({\bf x}_t,{\bf x}_{t^\prime})$ is a $g$-pair along $I\subseteq [1,n]$ if and only if
 ${\bf x}_{t^\prime}$ is connected with ${\bf x}_{t_0}$ by an $I$-sequence and
 $G_t|_{I\times [1,n]}=G_{t^\prime}|_{I\times I}Q$ for some $Q\in M_{|I|\times n}(\mathbb Z_{\geq 0})$.
\end{Proposition}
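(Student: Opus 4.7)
The plan is to unpack the definition of a $g$-pair into matrix language. Throughout, we will use the basic formula $g({\bf x}_t^{\bf v}) = G_t {\bf v}$ for $g$-vectors of cluster monomials, noted in the paragraph preceding Theorem \ref{thmfg}. The condition ``${\bf x}_{t'}$ is connected with ${\bf x}_{t_0}$ by an $I$-sequence'' appears identically on both sides, so it suffices to verify the equivalence of the remaining conditions.

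First I would observe that, because $\pi_I$ extracts precisely the rows indexed by $I$, the equality $\pi_I(g({\bf x}_t^{\bf v})) = \pi_I(g({\bf x}_{t'}^{{\bf v}'}))$ is nothing but
\[
G_t|_{I\times [1,n]}\, {\bf v} \;=\; G_{t'}|_{I\times [1,n]}\, {\bf v}'.
\]
Moreover, under the hypothesis $v'_i=0$ for $i\notin I$, the right hand side collapses to $G_{t'}|_{I\times I}\,({\bf v}'|_I)$, where ${\bf v}'|_I \in \mathbb{N}^{|I|}$ is the restriction of ${\bf v}'$ to its $I$-coordinates. So the $g$-pair condition is equivalent to the following: for every ${\bf v}\in\mathbb{N}^n$ there is some ${\bf w}\in\mathbb{N}^{|I|}$ with $G_t|_{I\times [1,n]}\, {\bf v}=G_{t'}|_{I\times I}\,{\bf w}$.

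For the forward direction, I would apply this equivalent formulation to the standard basis vectors ${\bf v}={\bf e}_j$, $j=1,\dots,n$, each of which corresponds to the cluster monomial $x_{j;t}$. This produces vectors ${\bf w}_1,\dots,{\bf w}_n\in\mathbb{N}^{|I|}$ whose concatenation $Q=({\bf w}_1\mid\cdots\mid{\bf w}_n)\in M_{|I|\times n}(\mathbb Z_{\geq 0})$ satisfies $G_t|_{I\times [1,n]} = G_{t'}|_{I\times I}\, Q$ column by column. For the reverse direction, given such a $Q$ and any cluster monomial ${\bf x}_t^{\bf v}$ with ${\bf v}\in\mathbb{N}^n$, I would set ${\bf w}:=Q{\bf v}\in\mathbb{N}^{|I|}$ (non-negativity is automatic since $Q$ and ${\bf v}$ have non-negative entries) and define ${\bf v}'\in\mathbb{N}^n$ by $v'_{i_k}=w_k$ for $i_k\in I$ and $v'_i=0$ otherwise; then ${\bf x}_{t'}^{{\bf v}'}$ is a cluster monomial satisfying the required $g$-vector identity by direct computation.

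There is no substantial obstacle here: the statement is essentially a linear-algebraic repackaging of the $g$-pair condition, and the only point requiring a moment of care is ensuring that the vector ${\bf v}'$ produced in the reverse direction lies in $\mathbb{N}^n$, which follows from the non-negativity of $Q$. The value of the proposition is conceptual rather than technical, since it translates the $g$-pair property into a statement about columns of $G$-matrices that will later be used to relate the enough $g$-pairs property to sign-coherence of $G$-matrices.
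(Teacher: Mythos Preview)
Your proposal is correct and follows essentially the same approach as the paper: both directions are handled by applying the $g$-pair condition to the standard basis vectors ${\bf e}_j$ to extract the columns of $Q$, and conversely by taking ${\bf v}'$ to be the $\pi_I^\top$-lift of $Q{\bf v}$ and using linearity. The paper's proof is written in slightly different notation (using $\pi_I^\top({\bf q}_k)$ rather than your restriction language), but the argument is identical.
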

\begin{proof}

If $({\bf x}_t,{\bf x}_{t^\prime})$ is a $g$-pair along $I$, then we know that  ${\bf x}_{t^\prime}$ is connected with ${\bf x}_{t_0}$ by an $I$-sequence and
for the cluster variable $x_{k;t}$, there exists a cluster monomial ${\bf x}_{t^{\prime}}^{{\bf v}_k^{\prime}}$ in ${\bf x}_{t^{\prime}}$ with that the $i$-th component of ${\bf v}_k^{\prime}$ is zero  for $i\notin I$
  such that $\pi_I(g(x_{k;t}))=\pi_I(g({\bf x}_{t^{\prime}}^{{\bf v}_k^{\prime}}))=\pi_I(G_{t^\prime}{\bf v}_k^\prime)$, where $k=1,\cdots,n$. Set $Q=(\pi_I({\bf v}_1^\prime),\cdots,\pi_I({\bf v}_n^\prime))\in M_{|I|\times n}(\mathbb Z_{\geq 0})$.
  Then we get $G_t|_{I\times [1,n]}=G_{t^\prime}|_{I\times I}Q$.

  If ${\bf x}_{t^\prime}$ is connected with ${\bf x}_{t_0}$ by an $I$-sequence and
 $G_t|_{I\times [1,n]}=G_{t^\prime}|_{I\times I}Q$ for some $Q\in M_{|I|\times n}(\mathbb Z_{\geq 0})$, by the definition of $g$-pair along $I$, we only need to show that for any cluster monomial ${\bf x}_t^{\bf v}$ in ${\bf x}_t$, there exists a cluster monomial ${\bf x}_{t^{\prime}}^{{\bf v}^{\prime}}$ in ${\bf x}_{t^{\prime}}$ with $v_i^{\prime}=0$ for $i\notin I$
  such that
$$\pi_I(g({\bf x}_t^{\bf v}))=\pi_I(g({\bf x}_{t^{\prime}}^{{\bf v}^{\prime}})).$$

  Denote by ${\bf q}_k$ the $k$-th column vector of $Q$, then $\pi_I^\top({\bf q}_k)\in\mathbb N^n$ and the $i$-th component of $\pi_I^\top({\bf q}_k)$ is zero for $i\notin I$.
  It is easy to see that $G_t|_{I\times [1,n]}=G_{t^\prime}|_{I\times I}Q$  just means that
  $$\pi_I(g(x_{k;t}))=\pi_I(G_{t^\prime} \pi_I^\top({\bf q}_k))=\pi_I({\bf x}_{t^\prime}^{\pi_I^\top({\bf q}_k)}),$$
  for $k=1,\cdots,n$. For any cluster monomial  ${\bf x}_t^{\bf v}$ in ${\bf x}_t$, set ${\bf v}^\prime=v_1\pi_I^\top({\bf q}_1)+\cdots+v_n\pi_I^\top({\bf q}_n)\in\mathbb N^n$, then  $v_i=0$ for $i\notin I$ and
  \begin{eqnarray}
  \pi_I(g({\bf x}_t^{\bf v}))&=& \pi_I(v_1g(x_{1;t})+\cdots+v_ng(x_{n;t}))\nonumber\\
&=&v_1\pi_I(g(x_{1;t}))+\cdots+v_n\pi_I(g(x_{n;t}))\nonumber\\
&=&v_1\pi_I({\bf x}_{t^\prime}^{\pi_I^\top({\bf q}_1)})+\cdots+v_n\pi_I({\bf x}_{t^\prime}^{\pi_I^\top({\bf q}_n)})\nonumber\\
&=&\pi_I(v_1G_{t^\prime}\pi_I^\top({\bf q}_1)+\cdots+v_nG_{t^\prime}\pi_I^\top({\bf q}_n))\nonumber\\
   &=&\pi_I(G_{t^\prime}{\bf v}^{\prime})= \pi_I(g({\bf x}_{t^{\prime}}^{{\bf v}^{\prime}}))\nonumber.
\end{eqnarray}
\end{proof}

\begin{Corollary}\label{corsign}
If $\mathcal A(\mathcal S)$ has the enough $g$-pairs property, then the sign-coherence of $G$-matrices holds, i.e., any two nonzero entries of a $G$-matrix of $\mathcal A(\mathcal S)$ in the same row have the same sign.
\end{Corollary}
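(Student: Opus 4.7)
The plan is to exploit the enough $g$-pairs property in its most degenerate case, where the subset $I$ is a singleton, and to read off sign-coherence of row $i$ of $G_t$ directly from the factorization given by Proposition \ref{prodef}.

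Fix any row index $i \in \{1,\ldots,n\}$ and any cluster ${\bf x}_t$ of $\mathcal{A}(\mathcal{S})$. The first step is to apply the enough $g$-pairs property with $I = \{i\}$ to obtain a cluster ${\bf x}_{t'}$ for which $({\bf x}_t,{\bf x}_{t'})$ is a $g$-pair along $\{i\}$. By definition, ${\bf x}_{t'}$ is connected to ${\bf x}_{t_0}$ by an $\{i\}$-sequence, that is, a sequence of mutations all in direction $i$. Because $\mu_i^2$ acts as the identity on seeds, such a sequence produces only two outcomes: the seed at ${\bf x}_{t'}$ is equivalent either to the initial seed or to $\mu_i$ of the initial seed.

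The second step is to compute the $1\times 1$ block $G_{t'}|_{\{i\}\times\{i\}}$ in each of these two cases. If ${\bf x}_{t'}$ is equivalent to ${\bf x}_{t_0}$, then $G_{t'} = I_n$ and the block is $(+1)$. If ${\bf x}_{t'} = \mu_i({\bf x}_{t_0})$, the only column of $G_{t'}$ that is not a standard basis vector is the $i$-th, arising from $x_{i;t'} = F_i/x_{i;t_0}$; a direct unravelling of the $\mathbb{Z}^n$-grading $\deg(x_{j;t_0})={\bf e}_j$, $\deg(y_j)=-{\bf b}_j$ shows that this $g$-vector is $[-{\bf b}_i]_+ - {\bf e}_i$, whose $i$-th entry is $-1$ (since $b_{ii}=0$). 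So the block is $(-1)$.

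The third step is to invoke Proposition \ref{prodef} to write $G_t|_{\{i\}\times [1,n]} = G_{t'}|_{\{i\}\times\{i\}}\, Q$ for some non-negative integer row vector $Q\in M_{1\times n}(\mathbb{Z}_{\geq 0})$. Combining with the two cases above, the $i$-th row of $G_t$ equals either $Q$ or $-Q$, hence all of its nonzero entries share a common sign. Since $i$ was arbitrary, this is exactly sign-coherence. The only substantive computation is the $g$-vector of $x_{i;\mu_i(t_0)}$, and this is routine; there is no real obstacle, which reflects the fact that sign-coherence is literally the $|I|=1$ instance of the enough $g$-pairs property.
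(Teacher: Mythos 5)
Your proposal is correct and follows essentially the same route as the paper's proof: take $I=\{i\}$, use Proposition \ref{prodef} to get $G_t|_{\{i\}\times[1,n]}=G_{t'}|_{\{i\}\times\{i\}}Q$ with $Q$ nonnegative, and observe that the $1\times 1$ block is $\pm 1$ because an $\{i\}$-sequence leaves only the initial seed or its single mutation at $i$. The only difference is cosmetic — you explicitly compute the $g$-vector $[-{\bf b}_i]_+-{\bf e}_i$ after one mutation, where the paper simply asserts the block is $\pm 1$.
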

\begin{proof}
For a cluster ${\bf x}_t$ and $I=\{k\}$, since $\mathcal A(\mathcal S)$ has the enough $g$-pairs property, we know that there exists a cluster ${\bf x}_{t^{\prime}}$ such that $({\bf x}_t,{\bf x}_{t^{\prime}})$ is a $g$-pair along $I$. Then by Proposition \ref{prodef}, ${\bf x}_{t^{\prime}}$ is connected with ${\bf x}_{t_0}$ by an $I=\{k\}$-sequence and
there exists a matrix $Q\in M_{1 \times n}(\mathbb Z_{\geq 0})$ such that
$G_t|_{I\times [1,n]}=G_{t^\prime}|_{I\times I}Q$. Since  ${\bf x}_{t^{\prime}}$ is connected with ${\bf x}_{t_0}$ by an $I=\{k\}$-sequence, we know that  ${\bf x}_{t^{\prime}}$ is equal to ${\bf x}_{t_0}$ or is obtained from  ${\bf x}_{t_0}$ by one step of mutation in the direction of $k$. Thus we know that $G_{t^\prime}|_{I\times I}=G_{t^\prime}|_{\{k\}\times \{k\}}=\pm 1$ and $G_t|_{\{k\}\times [1,n]}=G_t|_{I\times [1,n]}=G_{t^\prime}|_{I\times I}Q=\pm Q$. By $Q\in M_{1 \times n}(\mathbb Z_{\geq 0})$, we know that any two nonzero entries in the $k$-th row of $G_t$ have the same sign.
\end{proof}

By Corollary \ref{corsign} and its proof, the  enough $g$-pairs property can be understood as a strong version of sign-coherence of $G$-matrices. Recently, the sign-coherence of $G$-matrices for skew-symmetrizable cluster algebras has been proved in \cite{GHKK}. The next theorem aims to show that the  strong version of sign-coherence of $G$-matrices, i.e., the enough $g$-pairs property also holds for skew-symmetrizable cluster algebras.

\begin{Theorem}(Existence theorem)\label{thmenough}
Any skew-symmetrizable cluster algebra $\mathcal A(\mathcal S)$ with principal coefficients at $t_0$ has  the enough $g$-pairs property.
\end{Theorem}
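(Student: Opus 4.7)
The plan is to deduce the existence of $\mathbf{x}_{t'}$ from the geometry of the scattering diagram $\mathfrak{D}(B_{t_0})$, using Proposition \ref{prodef} to recast the $g$-pair condition as the matrix factorization
\[
G_t|_{I\times[1,n]} = G_{t'}|_{I\times I}\, Q,\qquad Q\in M_{|I|\times n}(\mathbb Z_{\geq 0}).
\]
Fix a cluster $\mathbf{x}_t$ and a subset $I\subseteq\{1,\dots,n\}$. By Theorem \ref{thmgvectors}, the reachable chamber of $\mathfrak D(B_{t_0})$ associated with $\mathbf{x}_t$ is the open cone $\mathcal C_t=\mathbb R_{>0}g(x_{1;t})+\cdots+\mathbb R_{>0}g(x_{n;t})$. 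By Theorem \ref{thmpullback}, the pullback diagram $\pi_I^{\ast}\mathfrak D(B^\dagger)$, where $B^\dagger=B_{t_0}|_{I\times I}$, is obtained from $\mathfrak D(B_{t_0})$ by deleting exactly those walls whose normal vector has a nonzero entry outside $I$. Since the chambers of $\pi_I^{\ast}\mathfrak D(B^\dagger)$ are unions of chambers of $\mathfrak D(B_{t_0})$, the cone $\mathcal C_t$ is contained in a unique chamber $\widetilde{\mathcal C}$ of $\pi_I^{\ast}\mathfrak D(B^\dagger)$, and by construction of the pullback $\widetilde{\mathcal C}=\pi_I^{-1}(\mathcal C^{\dagger})$ for a chamber $\mathcal C^{\dagger}$ of $\mathfrak D(B^\dagger)$.

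The next step is to produce the cluster $\mathbf{x}_{t'}$. Since $\mathcal C_t$ is reachable in $\mathfrak D(B_{t_0})$ and the all-positive chamber $(\mathbb R_{>0})^n$ is contained in $\pi_I^{-1}((\mathbb R_{>0})^{|I|})$, the chamber $\widetilde{\mathcal C}$ is reachable in $\pi_I^{\ast}\mathfrak D(B^\dagger)$ by a finite transverse path that only crosses walls of this pullback; pushing such a path down by $\pi_I$ shows $\mathcal C^{\dagger}$ is reachable in $\mathfrak D(B^\dagger)$. Applying Theorem \ref{thmgvectors} to the rank-$|I|$ principal-coefficient cluster algebra with initial exchange matrix $B^\dagger$, we obtain a $G$-matrix $(\mathbf h_1,\dots,\mathbf h_{|I|})$ of that algebra with $\mathcal C^{\dagger}=\mathbb R_{>0}\mathbf h_1+\cdots+\mathbb R_{>0}\mathbf h_{|I|}$. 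Tracking the corresponding mutation sequence back into the full algebra $\mathcal A(\mathcal S)$ produces a cluster $\mathbf{x}_{t'}$ connected to $\mathbf{x}_{t_0}$ by an $I$-sequence; because mutations in directions $k\in I$ preserve columns indexed outside $I$ (so those columns of $G_{t'}$ remain standard basis vectors), one verifies that $G_{t'}|_{I\times I}=(\mathbf h_1,\dots,\mathbf h_{|I|})$ and that the rows of $G_{t'}$ outside $I$ are zero on columns indexed by $I$.

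It remains to verify the factorization condition. For each $j$, the vector $g(x_{j;t})$ lies in the closure $\overline{\mathcal C_t}\subseteq\overline{\widetilde{\mathcal C}}=\pi_I^{-1}(\overline{\mathcal C^{\dagger}})$, so $\pi_I(g(x_{j;t}))\in\overline{\mathcal C^{\dagger}}=\mathbb R_{\geq0}\mathbf h_1+\cdots+\mathbb R_{\geq 0}\mathbf h_{|I|}$. Hence there exist real $q_{ij}\geq 0$ with $\pi_I(g(x_{j;t}))=\sum_{i}q_{ij}\mathbf h_i$. Since $(\mathbf h_1,\dots,\mathbf h_{|I|})$ is a $G$-matrix of a principal-coefficient cluster algebra, Theorem \ref{detgthm}(ii) gives $\det(\mathbf h_1,\dots,\mathbf h_{|I|})=\pm 1$, so its inverse is integral; combined with the integrality of $\pi_I(g(x_{j;t}))$ this forces each $q_{ij}\in\mathbb Z_{\geq 0}$. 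Assembling the $q_{ij}$ into $Q$ yields $G_t|_{I\times[1,n]}=G_{t'}|_{I\times I}Q$, and Proposition \ref{prodef} concludes that $(\mathbf{x}_t,\mathbf{x}_{t'})$ is a $g$-pair along $I$.

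I expect the main obstacle to be the bookkeeping in the second paragraph: transporting reachability between $\mathfrak D(B_{t_0})$, its pullback $\pi_I^{\ast}\mathfrak D(B^\dagger)$, and the ambient rank-$|I|$ diagram $\mathfrak D(B^\dagger)$, and then matching the $G$-matrix produced by Theorem \ref{thmgvectors} in the smaller algebra with the actual submatrix $G_{t'}|_{I\times I}$ of the cluster obtained from the corresponding $I$-sequence in $\mathcal A(\mathcal S)$. Both steps depend crucially on Muller's Theorem \ref{thmpullback} controlling precisely which walls of $\mathfrak D(B_{t_0})$ disappear when we restrict attention to $I$-direction mutations, which is what makes the diagrammatic argument line up with the combinatorics of $g$-vectors.
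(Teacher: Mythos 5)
Your proposal is correct and follows essentially the same route as the paper's proof: Muller's pullback theorem to nest the chamber $\mathcal C_t$ inside a reachable chamber of $\pi_I^{\ast}\mathfrak D(B_{t_0}^\dag)$, projection to a reachable chamber of $\mathfrak D(B_{t_0}^\dag)$ and Theorem \ref{thmgvectors} in the rank-$|I|$ algebra to produce the $I$-sequence and the cluster ${\bf x}_{t'}$, and unimodularity of $G$-matrices to upgrade nonnegative real coefficients to nonnegative integers; routing the conclusion through Proposition \ref{prodef} rather than verifying the cluster-monomial condition directly is only a cosmetic difference. (One harmless slip: the entries of $G_{t'}$ in rows outside $I$ and columns in $I$ need not vanish --- that block is merely nonnegative --- but your argument never actually uses this claim.)
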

\begin{proof}
For any subset $I=\{i_1,\cdots,i_p\}$ of $\{1,2,\cdots,n\}$, and any cluster ${\bf x}_t$ of $\mathcal A(\mathcal S)$, we need to find a cluster ${\bf x}_t^\prime$, such that $({\bf x}_t,{\bf x}_t^\prime)$ is a $g$-pair along $I$. We will use scattering diagrams to find such a cluster ${\bf x}_t^\prime$.

Let $B_{t_0}^\dag$ be the principal submatrix of $B_{t_0}$ defined by $I$.
By Theorem \ref{thmpullback}, the scattering diagram $\pi_I^{\ast}\mathfrak D(B_{t_0}^\dag)$
is obtained from $\mathfrak D(B_{t_0})$ by removing all of its walls $({\bf v}, W)$, where ${\bf v}=(v_1,\cdots,v_n)$ has any nonzero entry $v_i$ with $i\notin I$. Thus we can get the following facts:\vspace{2mm}
\\
{\bf Fact (i)}£ºEvery wall $({\bf v},W)$ in $\pi_I^{\ast}\mathfrak D(B_{t_0}^\dag)$ is a wall in $\mathfrak D(B_{t_0})$;

From this fact, we have that
\\
{\bf Fact (ii)}: Each chamber $\mathcal C$ of $\mathfrak D(B_{t_0})$ is contained in some chamber ${\mathcal C}^\dag$ of  $\pi_I^{\ast}\mathfrak D(B_{t_0}^\dag)$;

From Fact (i) and  (ii), we obtain that
\\
 {\bf Fact (iii)}: Each  finite transverse path $\rho: [0,1]\rightarrow \mathbb R^n$  in $\mathfrak D(B_{t_0})$ is also a finite transverse path in $\pi_I^{\ast}\mathfrak D(B_{t_0}^\dag)$;

From Fact (ii) and (iii), we have that
\\
{\bf Fact (iv)}: Each reachable chamber $\mathcal C$ of $\mathfrak D(B_{t_0})$ is contained in some reachable chamber ${\mathcal C}^\dag$ of  $\pi_I^{\ast}\mathfrak D(B_{t_0}^\dag)$.

 From the definition of $\pi_I^{\ast}\mathfrak D(B_{t_0}^\dag)$, we have the following fact:
\\
 {\bf Fact (v)}: If  ${\mathcal C}^\dag$ is a (reachable) chamber of  $\pi_I^{\ast}\mathfrak D(B_{t_0}^\dag)$, then $\pi_I({\mathcal C}^\dag)$ is a (reachable) chamber of $\mathfrak D(B_{t_0}^\dag)$.

For any cluster ${\bf x}_t$ of  $\mathcal A(\mathcal S)$, it has a $G$-matrix $G_t$. By Theorem \ref{thmgvectors}, there exists a reachable chamber $\mathcal C_t$ corresponding to $G_t$.
By Fact (iv), there exists a reachable chamber $\mathcal C_t^{\dag}$  of $\pi_I^{\ast}\mathfrak D(B_{t_0}^\dag)$
such that  $\mathcal C_t\subseteq \mathcal C_t^{\dag}$.

By Fact (v), $\pi_I({\mathcal C_t}^\dag)$ is a reachable chamber of $\mathfrak D(B_{t_0}^\dag)$. By Theorem \ref{thmgvectors} again, there exists a $G$-matrix $G_{t\dag}=({\bf g}_1^{t\dag},\cdots,{\bf g}_p^{t\dag})$ of $B_{t_0}^\dag$ corresponding to the reachable chamber $\pi_I({\mathcal C_t}^\dag)$ of  $\mathfrak D(B_{t_0}^\dag)$.
Suppose that the $G$-matrix $G_{t\dag}$ of $B_{t_0}^\dag$ is obtained from the initial $G$-matrix of  $B_{t_0}^\dag$
by the sequence of mutations $\mu_{k_s}\cdots\mu_{k_2}\mu_{k_1}$, where $k_i\in I$ for $i=1,\cdots,s$.

Clearly, the seed $({\bf x}_{t^{\prime}},{\bf y}_{t^{\prime}},B_{t^{\prime}}):=\mu_{k_s}\cdots\mu_{k_2}\mu_{k_1}({\bf x}_{t_0},{\bf y}_{t_0},B_{t_0})$ is connected with $({\bf x}_{t_0},{\bf y}_{t_0},B_{t_0})$ by an $I$-sequence. We will show that $({\bf x}_t,{\bf x}_t^\prime)$ is a $g$-pair along $I$.

For any cluster monomial ${\bf x}_t^{\bf v}$ in ${\bf x}_t$, we need to find a cluster monomial ${\bf x}_{t^{\prime}}^{{\bf v}^{\prime}}$ in ${\bf x}_{t^{\prime}}$ with $v_i^{\prime}=0$ for $i\notin I$
  such that
$$\pi_I(g({\bf x}_t^{\bf v}))=\pi_I(g({\bf x}_{t^{\prime}}^{{\bf v}^{\prime}})).$$

 We know that the $g$-vector $g({\bf x}_t^{\bf v})$ of ${\bf x}_t^{\bf v}$ is $$v_1{\bf g}_1^t+\cdots +v_n{\bf g}_n^t\in\mathbb R_{\geq0}{\bf g}_1^t+\cdots+\mathbb R_{\geq0}{\bf g}_n^t=\overline {\mathcal C_t},$$
where $\overline{\mathcal C_t}$ is the closure of $\mathcal C_t$ in $\mathbb R^n$.
Thus $$\pi_I(g({\bf x}_t^{\bf v}))\in\pi_I(\overline{\mathcal C})\subseteq\pi_I(\overline{\mathcal C_t^{\dag}})=\overline{\pi_I(\mathcal C_t^{\dag})}=\mathbb R_{\geq 0}{\bf g}_1^{t\dag}+\cdots+\mathbb R_{\geq 0}{\bf g}_p^{t\dag},$$
i.e., we get that
$$\pi_I(g({\bf x}_t^{\bf v}))\in\mathbb R_{\geq 0}{\bf g}_1^{t\dag}+\cdots+\mathbb R_{\geq 0}{\bf g}_p^{t\dag}.$$
By $\pi_I(g({\bf x}_t^{\bf v}))\in \mathbb Z^p$ and $det (G_{t\dag})=\pm 1$ (Theorem \ref{detgthm}), we actually have that
$$\pi_I(g({\bf x}_t^{\bf v}))\in \mathbb Z_{\geq 0}{\bf g}_1^{t\dag}+\cdots+\mathbb Z_{\geq 0}{\bf g}_p^{t\dag}.$$
Thus we can assume that $\pi_I(g({\bf x}_t^{\bf v}))=v_1^{\dag}{\bf g}_1^{t\dag}+\cdots+v_p^{\dag}{\bf g}_p^{t\dag}$, where $v^{\dag}=(v_1^{\dag},\cdots,v_p^{\dag})^{\top}\in\mathbb N^p$.
 Set ${\bf v}^\prime=\pi_I^{\top}({\bf v}^{\dag})\in\mathbb N^n$, then ${\bf x}_{t^{\prime}}^{{\bf v}^{\prime}}$ is cluster monomial in ${\bf x}_{t^{\prime}}$ with $v_i^{\prime}=0$ for $i\notin I$. It is easy to see that  $\pi_I({\bf x}_{t^{\prime}}^{\pi_I^{\top}({\bf v}^{\dag})})=v_1^{\dag}{\bf g}_1^{t\dag}+\cdots+v_p^{\dag}{\bf g}_p^{t\dag}$, thus $\pi_I(g({\bf x}_t^{\bf v}))=\pi_I(g({\bf x}_{t^{\prime}}^{{\bf v}^{\prime}}))$.

 Indeed, without loss of generality, we can assume that $I=\{1,2,\cdots,p\}$. We know that the $G$-matrix of ${\bf x}_{t^{\prime}}$ has the form of $$G_{t^{\prime}}=\begin{pmatrix}G_{t\dag}&0\\\ast&I_{n-p}\end{pmatrix}.$$
Then
 $$\pi_I({\bf x}_{t^{\prime}}^{\pi_I^{\top}({\bf v}^{\dag})})=\pi_I(G_{t^\prime}\pi_I^{\top}({\bf v}^{\dag}))= G_{t\dag}{\bf v}^{\dag} =v_1^{\dag}{\bf g}_1^{t\dag}+\cdots+v_p^{\dag}{\bf g}_p^{t\dag}=\pi_I(g({\bf x}_t^{\bf v}\pi_I^{\top}({\bf v}^{\dag}))).$$
 The proof is finished.
\end{proof}
The above theorem is the existence theorem for the enough $g$-pairs, which says that for any subset $I=\{i_1,\cdots,i_p\}$ of $\{1,2,\cdots,n\}$, and any cluster ${\bf x}_t$ of $\mathcal A(\mathcal S)$, there exists a cluster ${\bf x}_t^\prime$, such that $({\bf x}_t,{\bf x}_t^\prime)$ is a $g$-pair along $I$.
In fact, such ${\bf x}_{t^{\prime}}$ is unique by uniqueness theorem given in Section 5.

\vspace{5mm}

\section{Uniqueness  of $g$-pairs}
In this section, we prove the uniqueness theorem for $g$-pairs. Before giving the uniqueness theorem, some preparations need be made.

\begin{Lemma}\label{lemvariable}
Let $\mathcal A(\mathcal S)$ be a skew-symmetrizable cluster algebra with principal coefficients at $t_0$, and ${\bf x}_t, {\bf x}_{t^\prime}$ be two clusters of $\mathcal A(\mathcal S)$.  If there exists a vector ${\bf v}=(v_1,\cdots,v_n)^\top\in\mathbb N^n$ such that $x_{1;t}={\bf x}_{t^\prime}^{\bf v}$, then $x_{1,t}$ is a cluster variable in ${\bf x}_{t^{\prime}}$.
\end{Lemma}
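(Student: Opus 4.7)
The plan is to analyze ${\bf v}$ through $g$-vectors. First, since $x_{1;t}$ is a cluster monomial in both ${\bf x}_t$ (with exponent ${\bf e}_1$) and ${\bf x}_{t'}$ (with exponent ${\bf v}$), Theorem \ref{thmmonomial} forces their $g$-vectors to coincide:
\[
g(x_{1;t})=G_{t'}{\bf v}=\sum_{i=1}^n v_i\,g(x_{i;t'}).
\]

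Next, I would exploit the fan structure of reachable chambers provided by Theorem \ref{thmgvectors}. The vector $g(x_{1;t})$ is one of the extremal rays of the simplicial cone $\overline{C_t}=\sum_i \mathbb{R}_{\geq 0}\,g(x_{i;t})$ (namely, the one corresponding to the first cluster variable), while the displayed relation together with ${\bf v}\in\mathbb N^n$ places the same vector inside $\overline{C_{t'}}$. Since the reachable chambers of $\mathfrak{D}(B_{t_0})$ form a fan, any two such chambers intersect in a common face; so the ray through $g(x_{1;t})$, being an extremal ray of $\overline{C_t}$ lying in $\overline{C_{t'}}$, must itself be an extremal ray of $\overline{C_{t'}}$. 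This forces ${\bf v}=v_j{\bf e}_j$ with a single nonzero component, and correspondingly $g(x_{1;t})=v_j\,g(x_{j;t'})$.

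Finally, I would rule out $v_j\geq 2$. Applying Theorem \ref{thmmonomial} to the cluster monomial $x_{j;t'}^{v_j}$, whose $g$-vector equals $v_j\,g(x_{j;t'})=g(x_{1;t})$, we obtain $x_{1;t}=x_{j;t'}^{v_j}$. If $v_j\geq 2$, this exhibits the cluster variable $x_{1;t}$ as a proper power of another cluster variable, i.e.\ as a product of two non-unit elements of $\mathcal{A}(\mathcal{S})$, which contradicts the irreducibility of cluster variables in cluster algebras. Hence $v_j=1$, ${\bf v}={\bf e}_j$, and $x_{1;t}=x_{j;t'}$ is a cluster variable of ${\bf x}_{t'}$, as required.

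The main obstacle is the fan-theoretic step: extracting from the scattering-diagram machinery of Section 4 the precise combinatorial statement that an extremal ray of one reachable chamber which lies in a second reachable chamber must itself be extremal in the second. Once this is in hand, the remaining arguments reduce to direct applications of Theorem \ref{thmmonomial} together with standard irreducibility of cluster variables.
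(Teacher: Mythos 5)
Your argument hinges on two external facts, neither of which is established in the paper, and you yourself flag the first as unresolved. (1) The ``fan'' step: to conclude that ${\bf v}$ is supported on a single index you need that the closures of two reachable chambers of $\mathfrak D(B_{t_0})$ intersect in a common face, so that an extremal ray of $\overline{\mathcal C_t}$ lying in $\overline{\mathcal C_{t'}}$ is extremal in $\overline{\mathcal C_{t'}}$. Theorem \ref{thmgvectors} only identifies reachable chambers with $G$-matrix cones; it does not assert that these cones form a simplicial fan. That statement is true (it is part of the cluster-complex results of Gross--Hacking--Keel--Kontsevich), but since you present it as ``the main obstacle'' rather than supplying or citing a proof, the argument is incomplete as written. (2) The step $v_j\geq 2\Rightarrow$ contradiction invokes irreducibility of cluster variables in $\mathcal A(\mathcal S)$, which is a nontrivial theorem (Gei\ss--Leclerc--Schr\"oer) not available in the paper. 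This second gap is repairable cheaply: from $x_{1;t}=x_{j;t'}^{v_j}$ one can work in the Laurent polynomial ring $\mathbb{ZP}[x_{1;t}^{\pm1},\cdots,x_{n;t}^{\pm1}]$, a UFD in which $x_{1;t}$ is prime, and a prime cannot be a $v_j$-th power with $v_j\geq 2$. (Also, a small misattribution: the equality of $g$-vectors in your first display follows from homogeneity of the $\mathbb Z^n$-grading, not from Theorem \ref{thmmonomial}, which is the converse implication.)

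For contrast, the paper's proof is entirely elementary and avoids scattering diagrams. It expands each $x_{j;t'}$ in ${\bf x}_t$ using Theorem \ref{detgthm}(i); since $x_{1;t}=f_1^{v_1}\cdots f_n^{v_n}$ contains no $y$-variables and all coefficients are nonnegative, each $f_j$ with $v_j>0$ must be a single Laurent monomial ${\bf x}_t^{{\bf r}_{j;t}^{t'}}$. If $\sum v_j>1$, comparing total degrees forces some $f_{i_0}$ to be a \emph{proper} Laurent monomial in ${\bf x}_t$, and then one mutation of ${\bf x}_t$ in a direction with negative exponent produces a non-Laurent expansion of $x_{i_0;t'}$, contradicting the Laurent phenomenon. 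Hence $\sum v_j=1$ and the conclusion follows at once. If you want to salvage your route, you must either prove or precisely cite the fan property; otherwise the paper's degree-counting argument is the more economical path.
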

\begin{proof}
Because ${\bf v}\in\mathbb N^n$, we get that $v_i\geq0$ for $i=1,\cdots,n$. Since  $x_{1;t}={\bf x}_{t^\prime}^{\bf v}$,  ${\bf v}$ can not be zero, thus $v_1+v_2+\cdots+v_n\geq 1$.  In order to show that $x_{1,t}$ is a cluster variable in ${\bf x}_{t^{\prime}}$, it suffices to show that $v_1+v_2+\cdots+v_n=1$, due to  $x_{1;t}={\bf x}_{t^\prime}^{\bf v}$.

Assume that $v_1+v_2+\cdots+v_n>1$. Let $f_{j}$ be the expansion of $x_{j;t^\prime}$ with respect to ${\bf x}_t$, by Theorem \ref{detgthm} (i), we know that each $f_j$ has the form of
$$x_{j;t^{\prime}}={\bf x}_{t}^{{\bf r}_{j;t}^{t^{\prime}}}(1+\sum\limits_{0\neq {\bf v}\in\mathbb N^n,~~{\bf u}\in\mathbb Z^n}c_{\bf v}{\bf y}^{\bf v}{\bf x}_t^{\bf u}),\;\;  c_{\bf v}\geq 0.$$

  Replacing $f_1,\cdots, f_n$ into $x_{1;t}={\bf x}_{t^\prime}^{\bf v}$, we obtain that
  \begin{eqnarray}\label{eqnff}
 x_{1;t}=f_{1}^{v_1}\cdots f_{n}^{v_n}.
 \end{eqnarray}
 Because the left side of the above equality does not contain any one of $y_1,\cdots,y_n$, we know that each $f_j$ is a Laurent monomial of the form $f_j={\bf x}_{t}^{{\bf r}_{j;t}^{t^{\prime}}}$. Since the left side of the equality (\ref{eqnff}) is a monomial with total exponent $1$ and $v_1+\cdots+v_n>1$, we get that  at least one of $f_{1},\cdots, f_{n}$ is a proper Laurent monomial.
Say,  $f_{i_0}$  is a proper Laurent monomial, written as
 \begin{eqnarray}\label{eqn123}
 x_{i_0;t^{\prime}}=f_{i_0}={\bf x}_t^{\bf m}\;\;\;\text{with }m_l<0.
 \end{eqnarray}

 Let  $({\bf x}_{t_1},{\bf y}_{t_1}, B_{t_1})=\mu_{l}({\bf x}_{t},{\bf y}_{t}, B_{t})$, then $x_{i;t_1}=x_{i;t}$ for $i\neq l$ and $x_{l;t_1}x_{l;t}$ is a binomial in $x_{1;t_1},\cdots,x_{n;t_1}$. By replacing this into the right-side of (\ref{eqn123}), we get the expansion of $x_{i_0;t^{\prime}}$ with respect to ${\bf x}_{t_1}$. Since $m_l<0$, this obtained expansion  contains a binomial in its denominator, so it  cannot be a Laurent polynomial. This contradicts to the Laurent phenomenon. So $v_1+v_2+\cdots+v_n=1$, then we get that only one of $v_1,\cdots,v_n$ is $1$, and the others are zero. Say $v_k=1$. Thus  $x_{1;t}={\bf x}_{t^\prime}^{\bf v}$ is just $x_{1;t}=x_{k;t^\prime}\in {\bf x}_{t^{\prime}}$. The proof is finished.
\end{proof}

If we have a $g$-pair  $({\bf x}_t,{\bf x}_{t^\prime})$ along $I=\{1,\cdots,n-1\}$, the following lemma tells that the $d$-vectors of cluster variables in ${\bf x}_t$ with respect to ${\bf x}_{t^\prime}$ enjoying nice property. We will generalize this idea in Theorem \ref{thmcompatible}.

\begin{Lemma}\label{lemdv}
 Supposed that $\mathcal A(\mathcal S)$ is a skew-symmetrizable cluster algebra of rank $n$ with principal coefficients at $t_0$ and $({\bf x}_t,{\bf x}_{t^\prime})$  is a $g$-pair along $I=\{1,\cdots,n\}\backslash \{k\}$. Let $G_t, G_t^{\prime}$ be the $G$-matrices of  $({\bf x}_t,{\bf x}_{t^\prime})$ respectively, and denote by $R_{t}^{t^\prime}=(r_{jl}):=G_{t^{\prime}}^{-1}G_t$. Let $d^{t^\prime}(x_{i;t})=(d_{1}^\prime,\cdots,d_{n}^\prime)$ be the $d$-vector of $x_{i;t}$ with respect to ${\bf x}_{t^\prime}$. We have that

 (i). The $j$th row vector of $R_{t}^{t^\prime}$ is a non-negative vector for any $j\neq k$.

(ii). $r_{ki}>0$ if and only if $d_{k}^\prime=-1$,  and  if only if $x_{i;t}\in{\bf x}_{t^\prime}$ and $x_{i;t}=x_{k;t^\prime}$.

(iii). $r_{ki}=0$ if and only if $d_{k}^\prime=0$,  and if only if $x_{i;t}\in{\bf x}_{t^\prime}$ and $x_{i;t}\neq x_{k;t^\prime}$.

(iv). $r_{ki}<0$ if and only if $d_{k}^\prime>0$,  and if only if $x_{i;t}\notin{\bf x}_{t^\prime}$.

In particular, $d_{k}^\prime=-1,0$ or  $d_{k}^\prime>0$.
\end{Lemma}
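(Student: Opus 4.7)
My plan is to establish (i) from the block structure of $G_{t'}$ imposed by the $I$-sequence hypothesis, then use the $g$-pair matching of Proposition~\ref{prodef} together with Theorem~\ref{thmmonomial} and Lemma~\ref{lemvariable} to reduce the sign analysis of $r_{ki}$ to the question of whether $x_{i;t}$ belongs to ${\bf x}_{t'}$, and finally read the sign of $d_k'$ off the leading Laurent monomial provided by Theorem~\ref{detgthm}(i). First I would reorder indices so that $k=n$ and $I=\{1,\ldots,n-1\}$. Since ${\bf x}_{t'}$ is connected to ${\bf x}_{t_0}$ by an $I$-sequence, $x_{n;t'}=x_{n;t_0}$ is never mutated, hence $g(x_{n;t'})={\bf e}_n$ and
$$G_{t'}=\begin{pmatrix}G_{t'}^{\dag} & 0 \\ \ast & 1\end{pmatrix},$$
where $G_{t'}^{\dag}$ is the $G$-matrix of the rank $n-1$ sub-cluster algebra on $I$. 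The $g$-pair condition then reads $(G_t)|_{I\times[1,n]}=G_{t'}^{\dag}Q$ for some $Q\in M_{(n-1)\times n}(\mathbb{Z}_{\geq 0})$, and a block computation of the inverse yields $R_t^{t'}|_{[n-1]\times[n]}=Q$, proving (i).

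Next I would show that $r_{ki}\geq 0$ already forces $x_{i;t}\in{\bf x}_{t'}$. Under this hypothesis, (i) gives that the whole $i$-th column ${\bf r}_i$ of $R_t^{t'}$ lies in $\mathbb{N}^n$, so ${\bf x}_{t'}^{{\bf r}_i}$ is a cluster monomial in ${\bf x}_{t'}$ with the same $g$-vector $G_{t'}{\bf r}_i=g(x_{i;t})$ as $x_{i;t}$. The cluster-monomial uniqueness of Theorem~\ref{thmmonomial} then forces $x_{i;t}={\bf x}_{t'}^{{\bf r}_i}$, and Lemma~\ref{lemvariable} upgrades this to ${\bf r}_i={\bf e}_p$ for some $p$. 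Thus $x_{i;t}=x_{p;t'}$, with $p=k$ exactly when $r_{ki}>0$ (whence automatically $r_{ki}=1$) and $p\neq k$ when $r_{ki}=0$. Since $d^{t'}(x_{p;t'})=-{\bf e}_p$ straight from the definition of the $d$-vector, the two subcases yield $d_k'=-1$ and $d_k'=0$ respectively.

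Finally I would treat $r_{ki}<0$. Applying Theorem~\ref{detgthm}(i) with the roles of $t$ and $t'$ interchanged,
$$x_{i;t}={\bf x}_{t'}^{{\bf r}_i}\Big(1+\sum_{0\neq v\in\mathbb{N}^n,\; u\in\mathbb{Z}^n} c_v\, y^v\, {\bf x}_{t'}^u\Big),\qquad c_v\geq 0.$$
The leading monomial ${\bf x}_{t'}^{{\bf r}_i}$ has $x_{k;t'}$-exponent $r_{ki}<0$, and by non-negativity of the $c_v$ this contribution to the coefficient of $x_{k;t'}^{r_{ki}}$ cannot be cancelled by later terms, so the minimal $x_{k;t'}$-exponent appearing in the expansion is at most $r_{ki}$, giving $d_k'\geq -r_{ki}>0$. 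The three cases $r_{ki}>0,\;=0,\;<0$ are mutually exclusive and exhaustive, and correspond bijectively to $x_{i;t}=x_{k;t'}$, $x_{i;t}\in{\bf x}_{t'}\setminus\{x_{k;t'}\}$, $x_{i;t}\notin{\bf x}_{t'}$, and to $d_k'=-1$, $d_k'=0$, $d_k'>0$; so all the biconditionals in (ii)--(iv) and the final ``in particular'' clause follow simultaneously. The main obstacle will be the careful alignment of the three pieces of data---the block form of $G_{t'}$, the matching matrix $Q$ from the $g$-pair hypothesis, and the leading term of the Laurent expansion---after which the non-negativity of Laurent coefficients does all the heavy lifting.
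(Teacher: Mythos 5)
Your proposal is correct and follows essentially the same route as the paper's proof: part (i) via the block form of $G_{t'}$ combined with the $g$-pair matching (the paper argues directly from the definition using $\det G(t')=\pm 1$ where you invoke Proposition \ref{prodef} and a block inverse, which is equivalent), parts (ii)--(iii) via Theorem \ref{thmmonomial} and Lemma \ref{lemvariable}, and part (iv) via the non-cancellable leading term of the expansion from Theorem \ref{detgthm}(i). The trichotomy argument you use to turn the one-way implications into the stated biconditionals is also the (implicit) structure of the paper's proof.
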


\begin{proof}
Without loss of generality, we can assume that $k=n$.

(i):  We consider the Laurent expansion of $x_{i;t}$ with respect to ${\bf x}_{t^{\prime}}$, say $x_{i;t}=f({\bf x}_{t^\prime})$. By Theorem \ref{detgthm}, we know that the Laurent monomial ${\bf x}_{t^\prime}^{{\bf r}}$ appears in $f({\bf x}_{t^\prime})$, where ${\bf r}$ is the unique vector in $\mathbb Z^n$ such that $g(x_{i;t})=(g_{1}^t,\cdots,g_{n-1}^t,g_{n}^t)^{\top}=G_{t^\prime}{\bf r}$. Thus ${\bf r}=G_{t^\prime}^{-1}g(x_{i;t})=G_{t^{\prime}}^{-1}G_t{\bf e_i}$ is just the $i$th column vector of $R_{t}^{t^\prime}$.

Since $({\bf x}_t,{\bf x}_t^\prime)$ is a $g$-pair along $I=\{1,\cdots,n-1\}$, we know that for the cluster variable $x_{i;t}$ (as a cluster monomial in ${\bf x}_t$), there exists a cluster monomial ${\bf x}_{t^\prime}^{{\bf v}^\prime}$, with $v_j^\prime=0$ is zero for $j\notin I$, i.e., $v_n^\prime=0$ such that $$\pi_I(g(x_{i;t}))=\pi_I(g({\bf x}_{t^{\prime}}^{{\bf v}^\prime}))=\pi_I(G_{t^\prime}{\bf v}^\prime).$$

Since ${\bf x}_{t^\prime}$ is connected with ${\bf x}_{t_0}$ by an $I=\{1,\cdots,n-1\}$-sequence, we know that the $G$-matrix of ${\bf x}_{t^\prime}$ has the form of $G_{t^{\prime}}=\begin{pmatrix}G(t^{\prime})&0\\ \ast& 1\end{pmatrix}$.
Thus  $\pi_I(g(x_{i;t}))=\pi_I(g({\bf x}_{t^{\prime}}^{{\bf v}^\prime}))=\pi_I(G_{t^\prime}{\bf v}^\prime)$ just means that
$$(g_{1}^t,\cdots,g_{n-1}^t)^{\top}=G(t^\prime)(v_{1}^\prime,\cdots,v_{n-1}^\prime)^{\top}.$$

 Since $g(x_{i;t})=(g_{1}^t,\cdots,g_{n-1}^t,g_{n}^t)^{\top}=G_{t^\prime}{\bf r}$, we get that  $$(g_{1}^t,\cdots,g_{n-1}^t)^{\top}=G(t^\prime)(r_{1},\cdots,r_{n-1})^{\top}.$$

By Theorem \ref{detgthm}, we know that $det(G(t^\prime))=det(G_{t^{\prime}})=\pm1$. Thus we get that $$(r_{1},\cdots,r_{n-1})^{\top}=(v_{1}^\prime,\cdots,v_{n-1}^\prime)^{\top}\in\mathbb N^{n-1}.$$

Since $i$ is arbitrary, we can get that the $j$th row  vector of $R_{t}^{t^\prime}$ is a non-negative vector for any $j\neq n$.

(ii), (iii): If $r_{n}\geq 0$, then ${\bf r}\in \mathbb N^n$, and ${\bf x}_{t^\prime}^{{\bf r}}$ is a cluster monomial in ${\bf x}_{t^\prime}$ having the same $g$-vector  with the cluster variable $x_{i;t}$. By Theorem \ref{thmmonomial}, we get that $x_{i;t}={\bf x}_{t^\prime}^{{\bf r}}$. Then by Lemma \ref{lemvariable}, $x_{i;t}$ is a cluster variable in ${\bf x}_{t^\prime}$. More precisely, if $r_{n}>0$, then $x_{i;t}=x_{n;t^{\prime}}$, and thus $d_{n}^\prime=-1$. If $r_{n}=0$, then $x_{i;t}=x_{k;t^\prime}$ for some $k\neq n$. In this case, $d_{n}^\prime=0$.

(iv): If $r_{n}<0$, then the exponent of $x_{n;t^\prime}$ in  ${\bf x}_{t^\prime}^{{\bf r}}$ is negative, and thus the minimal exponent of $x_{n;t^\prime}$ appearing in $f({\bf x}_{t^\prime})$ is negative. Recall that the meaning of $-d_{n}^\prime$ is just the minimal exponent of $x_{n;t^\prime}$ appearing in $f({\bf x}_{t^\prime})$. So $d_{n}^\prime$ is positive. The proof is finished.
\end{proof}

\begin{Corollary}\label{thmdvectors}
Let $\mathcal A(\mathcal S)$ be a skew-symmetrizable cluster algebra of rank $n$ with principal coefficients at $t_0$, $x$ be a cluster variable of $\mathcal A(\mathcal S)$ and $d^{t_0}(x)=(d_1,\cdots,d_n)^{\top}$ be the $d$-vector of $x$ with respect to ${\bf x}_{t_0}$. Then $d_k=-1,0$ or $d_k>0$ for each $k=1,2,\cdots,n$ and

(i). if $d_k=-1$, then $x=x_{k;t_0}$;

(ii). if $d_k=0$, then there exists a cluster ${\bf x}_{t^{\prime}}$ containing both $x$ and $x_{k;t_0}$.
\end{Corollary}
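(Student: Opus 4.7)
The plan is to reduce this statement about $d$-vectors with respect to the initial cluster ${\bf x}_{t_0}$ to the $g$-pair analysis already carried out in Lemma \ref{lemdv}, by using Proposition \ref{prodvectors} to transport information about the $k$-th $d$-vector entry along an $I$-sequence. Fix $k\in\{1,\cdots,n\}$ and set $I=\{1,\cdots,n\}\setminus\{k\}$. Pick any cluster ${\bf x}_t$ containing $x$, so $x=x_{i;t}$ for some $i$.

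First I would apply Theorem \ref{thmenough} (the enough $g$-pairs property) to produce a cluster ${\bf x}_{t'}$ such that $({\bf x}_t,{\bf x}_{t'})$ is a $g$-pair along $I$. By the definition of a $g$-pair, ${\bf x}_{t'}$ is connected to ${\bf x}_{t_0}$ by an $I$-sequence, i.e.\ by a finite chain of mutations, none of which is in direction $k$. Proposition \ref{prodvectors}, applied step by step along this chain, then yields
\[
d_k \;=\; d_k^{\,t_0}(x) \;=\; d_k^{\,t'}(x),
\]
and moreover $x_{k;t'}=x_{k;t_0}$ since the $k$-th cluster entry is never touched by the mutations. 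So the $k$-th component of the initial $d$-vector of $x$ coincides with the $k$-th component of the $d$-vector of $x$ with respect to ${\bf x}_{t'}$.

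Now I would invoke Lemma \ref{lemdv} for the $g$-pair $({\bf x}_t,{\bf x}_{t'})$ along $I=\{1,\cdots,n\}\setminus\{k\}$: its conclusion is exactly that $d_k^{\,t'}(x_{i;t})\in\{-1,0\}$ or $d_k^{\,t'}(x_{i;t})>0$, with $d_k^{\,t'}(x)=-1$ iff $x=x_{k;t'}$ and $d_k^{\,t'}(x)=0$ iff $x\in{\bf x}_{t'}\setminus\{x_{k;t'}\}$. Transferring via the identities above, $d_k=-1$ forces $x=x_{k;t'}=x_{k;t_0}$, giving (i); and $d_k=0$ forces $x\in{\bf x}_{t'}$, and since $x_{k;t_0}=x_{k;t'}\in{\bf x}_{t'}$ as well, the cluster ${\bf x}_{t'}$ contains both $x$ and $x_{k;t_0}$, giving (ii). The only real work has already been done inside Lemma \ref{lemdv} (which itself leans on Theorem \ref{thmmonomial}, Lemma \ref{lemvariable} and Theorem \ref{detgthm}); the potential subtlety here is purely bookkeeping, namely ensuring that ${\bf x}_{t'}$ is obtained from ${\bf x}_{t_0}$ by mutations avoiding direction $k$, so that Proposition \ref{prodvectors} can legitimately pass the $k$-th coordinate of the $d$-vector back and forth between ${\bf x}_{t_0}$ and ${\bf x}_{t'}$. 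Once that is observed, the corollary follows immediately.
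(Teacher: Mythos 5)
Your proposal is correct and follows exactly the paper's own argument: apply Theorem \ref{thmenough} with $I=\{1,\cdots,n\}\setminus\{k\}$ to obtain a $g$-pair $({\bf x}_t,{\bf x}_{t'})$, use Proposition \ref{prodvectors} along the $I$-sequence to get $d_k=d_k^{t'}(x)$ and $x_{k;t'}=x_{k;t_0}$, and then conclude via Lemma \ref{lemdv}. No differences worth noting.
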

\begin{proof}
 Let ${\bf x}_t$ be a cluster containing the cluster variable $x$. By Theorem \ref{thmenough}, we know that $\mathcal A(\mathcal S)$ has  the enough $g$-pairs property. Thus for $I=\{1,2,\cdots,n\}\backslash\{k\}$ and the cluster ${\bf x}_t$, there exists a cluster ${\bf x}_{t^\prime}$ such that $({\bf x}_t,{\bf x}_{t^\prime})$ is a $g$-pair along $I=\{1,2,\cdots,n\}\backslash\{k\}$. Since ${\bf x}_{t^\prime}$  is connected with ${\bf x}_{t_0}$ by an $I=\{1,2,\cdots,n\}\backslash\{k\}$-sequence, we know that $x_{k;t^\prime}=x_{k;t_0}$ and  $d_k=d_k^\prime$, by Proposition \ref{prodvectors}.
 Then the result follows  from $x_{k;t^\prime}=x_{k;t_0}$ and Lemma \ref{lemdv}.
\end{proof}

Let ${\bf x}_t^{\bf v}$ be a cluster monomial of a cluster algebra,  the set $Su({\bf x}_t^{\bf v}):=\{x_{i;t}| v_i>0\}$ is called the {\bf support} of the cluster monomial ${\bf x}_t^{\bf v}$, and the cluster variables in this support are called the {\bf support cluster variables} for ${\bf x}_t^{\bf v}$.
\begin{Proposition}\label{prosupport}
Let  $\mathcal A(\mathcal S)$ be a skew-symmetrizable cluster algebra  with principal coefficients at $t_0$, and ${\bf x}_{t_1}^{\bf a},{\bf x}_{t_2}^{\bf f}$ be two cluster monomials of $\mathcal A(\mathcal S)$. If  ${\bf x}_{t_1}^{\bf a}={\bf x}_{t_2}^{\bf f}$, then

(i). ${\bf x}_{t_1}^{\bf a}$ and ${\bf x}_{t_2}^{\bf f}$ have the same support, i.e., $Su({\bf x}_{t_1}^{\bf a})=Su({\bf x}_{t_2}^{\bf f})$;

(ii). $x_{i;t_1}=x_{j;t_2}$ implies $a_i=f_j$.
\end{Proposition}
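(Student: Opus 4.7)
The plan is to exploit the fact that the $d$-vector of a cluster monomial taken with respect to the very cluster whose variables it is built from is just the negative exponent vector. Apply $d^{t_2}(\cdot)$ to both sides of ${\bf x}_{t_1}^{\bf a} = {\bf x}_{t_2}^{\bf f}$: the right-hand side gives $-{\bf f}$ directly from the definition, while the left-hand side equals $D_{t_1}^{t_2}{\bf a} = \sum_i a_i\, d^{t_2}(x_{i;t_1})$ via the identity $d^{t_0}({\bf x}_t^{\bf v}) = D_t^{t_0}{\bf v}$ recorded just after Proposition~\ref{prodrec}. So the whole proof reduces to extracting information from
\[
\sum_i a_i\, d^{t_2}(x_{i;t_1}) \,=\, -{\bf f}
\]
componentwise, using Corollary~\ref{thmdvectors} (applied with $t_2$ playing the role of the initial seed, which is legitimate since $d$-vectors are coefficient-independent by Proposition~\ref{prodrec}).

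For part (i), fix $j$ with $f_j > 0$; then the $j$-th component of the displayed identity is strictly negative, so some index $i$ with $a_i > 0$ must contribute a strictly negative $j$-th entry of $d^{t_2}(x_{i;t_1})$. By Corollary~\ref{thmdvectors} this entry can only be $-1$, in which case $x_{i;t_1} = x_{j;t_2}$. Hence $x_{j;t_2} \in Su({\bf x}_{t_1}^{\bf a})$, giving $Su({\bf x}_{t_2}^{\bf f}) \subseteq Su({\bf x}_{t_1}^{\bf a})$; the reverse inclusion follows by exchanging the roles of $t_1$ and $t_2$.

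For part (ii), if $x_{i;t_1} = x_{j;t_2}$ then $d^{t_2}(x_{i;t_1}) = -{\bf e}_j$, so the $j$-th component of the identity above rearranges to $a_i - f_j = \sum_{k \neq i} a_k \bigl(d^{t_2}(x_{k;t_1})\bigr)_j$. For any $k \neq i$ with $a_k > 0$, that component cannot equal $-1$ (otherwise $x_{k;t_1} = x_{j;t_2} = x_{i;t_1}$, contradicting distinctness of the cluster variables in ${\bf x}_{t_1}$), hence is non-negative by Corollary~\ref{thmdvectors}. Therefore $a_i \geq f_j$, and the symmetric computation with respect to ${\bf x}_{t_1}$ yields $f_j \geq a_i$, so $a_i = f_j$. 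The only delicate point in this argument is the identity $d^{t_2}({\bf x}_{t_1}^{\bf a}) = D_{t_1}^{t_2}{\bf a}$: it relies on the positive Laurent phenomenon (Theorem~\ref{thmLP}(iii)) to guarantee that the minimal $x_{j;t_2}$-exponent in the product Laurent expansion is attained with a strictly positive coefficient and hence cannot be killed by cancellation. Everything else is componentwise bookkeeping against Corollary~\ref{thmdvectors}.
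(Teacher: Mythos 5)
Your proof is correct. For part (i) it is essentially the paper's argument: both reduce to the identity $d^{t_2}({\bf x}_{t_1}^{\bf a})=D_{t_1}^{t_2}{\bf a}=\sum_i a_i\,d^{t_2}(x_{i;t_1})=-{\bf f}$ and then read off, via Corollary \ref{thmdvectors}, that a strictly negative $j$-th component forces some support variable $x_{i;t_1}$ to equal $x_{j;t_2}$ (the paper states this more tersely but the content is the same). For part (ii) you genuinely diverge: the paper, having established equality of supports, observes that ${\bf x}_{t_2}^{\bf f}$ is then a monomial in variables all lying in the single cluster ${\bf x}_{t_1}$, so the exponents match by algebraic independence of the cluster variables in one cluster. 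You instead stay inside the $d$-vector bookkeeping, isolating the $j$-th component to get $a_i-f_j=\sum_{k\neq i}a_k\bigl(d^{t_2}(x_{k;t_1})\bigr)_j\geq 0$ (the summands cannot be $-1$ by distinctness of the variables in ${\bf x}_{t_1}$, hence are nonnegative by Corollary \ref{thmdvectors}) and concluding by symmetry. Both are valid; the paper's route is shorter and uses only the transcendence-basis property, while yours is self-contained within the $d$-vector calculus and does not need part (i) as an input to part (ii). One small remark: the ``delicate point'' you flag, namely $d^{t_2}({\bf x}_{t_1}^{\bf a})=D_{t_1}^{t_2}{\bf a}$, does not actually require positivity --- $x_{j;t_2}$ is prime in the polynomial ring over the domain $\mathbb{ZP}$, so a product of polynomials each not divisible by it is again not divisible by it; this is why the paper records the identity as immediate after Proposition \ref{prodrec}. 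Your positivity justification is nevertheless also sound.
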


\begin{proof}
(i). ${\bf x}_{t_1}^{\bf a}={\bf x}_{t_2}^{\bf f}$ can be viewed as the Laurent expansion of the cluster monomial ${\bf x}_{t_1}^{\bf a}$ with respect to the cluster ${\bf x}_{t_2}$. By this expansion, we know the $d$-vector of the cluster monomial ${\bf x}_{t_1}^{\bf a}$ is the vector $-{\bf f}\in \mathbb Z_{\leq 0}^n$. By Corollary \ref{thmdvectors}, for each $-f_j<0$, $x_{j;t_2}$ must be a cluster variable in support of ${\bf x}_{t_1}^{\bf a}$, i.e., each support cluster variable $x_{j;t_2}$ of ${\bf x}_{t_2}^{\bf f}$ is a support cluster variable of ${\bf x}_{t_1}^{\bf a}$.

Similarly, we can show each support cluster variable $x_{i;t_1}$ of ${\bf x}_{t_1}^{\bf f}$ is a support cluster variable of ${\bf x}_{t_2}^{\bf f}$, thus  ${\bf x}_{t_1}^{\bf a}$ and ${\bf x}_{t_2}^{\bf f}$ have the same support.

(ii). If $x_{i;t_1}$ is not in the support of the two cluster monomials, then $a_i=f_j=0$.
If $x_{i;t_1}$ is a support cluster variable of the two cluster monomials, then $a_i=f_j$ follows from the algebraical independence of  cluster variables in the same cluster.
\end{proof}

\begin{Theorem}(Uniqueness theorem)\label{thmunique}
Let  $\mathcal A(\mathcal S)$ be a skew-symmetrizable cluster algebra  with principal coefficients at $t_0$, and $({\bf x}_t,{\bf x}_{t_1})$, $({\bf x}_t,{\bf x}_{t_2})$ be two $g$-pairs along $I$, then ${\bf x}_{t_1}={\bf x}_{t_2}$.
\end{Theorem}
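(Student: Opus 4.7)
Plan: Without loss of generality take $I=\{1,\dots,p\}$. Since both ${\bf x}_{t_1}$ and ${\bf x}_{t_2}$ are connected to ${\bf x}_{t_0}$ by $I$-sequences, $x_{i;t_1}=x_{i;t_2}=x_{i;t_0}$ for $i\notin I$, and each $G$-matrix $G_{t_j}$ has a block form with upper-left block $G(t_j)$, lower-left block $*_{t_j}$, upper-right block zero, and lower-right block $I_{n-p}$. Thus the theorem reduces to proving the set equality $\{x_{k;t_1}:k\in I\}=\{x_{k;t_2}:k\in I\}$. My strategy is, for each $k\in I$, to express $x_{k;t_1}$ as a cluster monomial ${\bf x}_{t_2}^{{\bf w}_k}$ with ${\bf w}_k\in\mathbb{N}^n$; Lemma \ref{lemvariable} will then force ${\bf w}_k={\bf e}_{\sigma(k)}$ and give $x_{k;t_1}=x_{\sigma(k);t_2}$, and applying the same argument with $t_1$ and $t_2$ interchanged produces the bijection, which Proposition \ref{prosupport} consolidates into the desired set equality.

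By Theorem \ref{thmmonomial}, producing ${\bf w}_k$ is the same as verifying that the $k$-th column of $R:=G_{t_2}^{-1}G_{t_1}$ lies in $\mathbb{N}^n$. A direct block computation expresses $R$ with upper-left $G(t_2)^{-1}G(t_1)$, upper-right zero, lower-left $-*_{t_2}G(t_2)^{-1}G(t_1)+*_{t_1}$, and lower-right $I_{n-p}$, so the positivity of ${\bf w}_k$ splits into an upper-block and a lower-block statement. For the upper block, Proposition \ref{prodef} applied to each of the two $g$-pair hypotheses yields matrices $Q_1,Q_2\in M_{p\times n}(\mathbb{Z}_{\geq 0})$ with $G(t_1)Q_1=G(t_2)Q_2=G_t|_{I\times[1,n]}$. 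Since $G_t$ is invertible, its $I$-rows span $\mathbb{R}^p$, so the cone $\sum_{i=1}^n\mathbb{R}_{\geq 0}\pi_I(g(x_{i;t}))$ is full-dimensional in $\mathbb{R}^p$; it lies in the closure of both the $G(t_1)$-chamber and the $G(t_2)$-chamber of $\mathfrak{D}(B_{t_0}^\dagger)$, and by the interior-disjointness of chambers in a consistent scattering diagram (the same principle used in the proof of Theorem \ref{thmenough}) these two chambers must coincide. By Theorem \ref{thmgvectors} together with Theorem \ref{detgthm}, there is then a permutation $\sigma$ of $I$ with $G(t_2)^{-1}G(t_1)=P_\sigma$.

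With $G(t_2)^{-1}G(t_1)=P_\sigma$, the lower-block requirement reduces to the componentwise inequality $(*_{t_1})_{\cdot k}\geq(*_{t_2})_{\cdot\sigma(k)}$ for every $k\in I$. Taken in isolation this is essentially the conclusion we are after, so I would couple it to the symmetric statement: expressing $x_{\sigma(k);t_2}$ as a cluster monomial in ${\bf x}_{t_1}$ yields the reverse inequality $(*_{t_2})_{\cdot\sigma(k)}\geq(*_{t_1})_{\cdot k}$. Running both directions in parallel, and using sign-coherence of $G$-matrices (Corollary \ref{corsign}) to guarantee $*_{t_1},*_{t_2}\geq 0$ so that the two inequalities cannot perversely cancel, forces both to be equalities. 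Hence $g(x_{k;t_1})=g(x_{\sigma(k);t_2})$; Theorem \ref{thmmonomial} then gives $x_{k;t_1}=x_{\sigma(k);t_2}$, and the result follows.

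The hardest part will be breaking the apparent circularity of the lower-block step: the two required non-negativities are logically entangled and cannot be established individually from the $g$-pair property alone. The parallel treatment made possible by sign-coherence is the essential ingredient. A conceptually cleaner alternative would be to descend to the rank-$p$ sub-cluster algebra attached to the principal submatrix $B_{t_0}^\dagger$, where the matching of restricted $G$-matrices coming from the upper-block step forces the restricted clusters to agree directly by Theorem \ref{thmmonomial} applied in the sub-algebra, at the cost of checking compatibility of principal coefficients under the natural embedding.
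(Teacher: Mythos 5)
Your reduction and your upper-block step are reasonable (and the upper-block chamber argument, while heavier than necessary, can be made to work: the cone generated by the columns of $G_t|_{I\times[1,n]}$ is full-dimensional in $\mathbb R^p$ and sits in the closure of both chambers of $\mathfrak D(B_{t_0}^\dag)$, and unimodularity makes the columns primitive, so $G(t_2)^{-1}G(t_1)$ is a permutation matrix). The genuine gap is the lower-block step, and you have in effect conceded it yourself: you need $(W_1)_{\cdot k}-(W_2)_{\cdot\sigma(k)}\geq 0$ to make the $k$-th column of $G_{t_2}^{-1}G_{t_1}$ nonnegative, you propose to obtain it by ``running both directions in parallel,'' but wanting to prove $a\geq b$ in one direction and $b\geq a$ in the other does not yield either inequality. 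Sign-coherence gives $W_1\geq 0$ and $W_2\geq 0$, which says nothing about the sign of the difference $W_1-W_2P_\sigma$; there is no mechanism in your write-up that actually establishes either one-sided inequality, so Theorem \ref{thmmonomial} and Lemma \ref{lemvariable} never become applicable.

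The paper's proof resolves exactly this difficulty by never trying to prove the lower-block nonnegativity. Instead it pads the exponent vectors so that nonnegativity is automatic: from $G(t_1)Q_1=G_t|_{I\times[1,n]}=G(t_2)Q_2$ it forms
$A=\begin{pmatrix}Q_1\\ W_2Q_2\end{pmatrix}$ and $F=\begin{pmatrix}Q_2\\ W_1Q_1\end{pmatrix}$,
both manifestly nonnegative (here sign-coherence of $W_1,W_2$ is genuinely used), and checks by a one-line block computation that $G_{t_1}A=G_{t_2}F$. Then Theorem \ref{thmmonomial} gives ${\bf x}_{t_1}^{{\bf a}_i}={\bf x}_{t_2}^{{\bf f}_i}$ for every column, Proposition \ref{prosupport} gives equality of supports, and the fact that $G_t|_{I\times[1,n]}$ has full row rank (so $Q_1,Q_2$ have no zero rows) forces every $x_{k;t_1}$ with $k\in I$ to lie in the common support, hence in ${\bf x}_{t_2}$, and symmetrically. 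This is the same ``multiply by extra nonnegative powers of the shared variables'' trick used in the proof of Theorem \ref{corconnected}; you should adopt it in place of the parallel-inequality step. Your closing alternative (descending to the rank-$p$ subalgebra) identifies only the upper blocks and does not by itself identify the cluster variables $x_{k;t_1}$, which live in the ambient field of all $n$ variables, so it does not close the gap either.
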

\begin{proof}
Without loss of generality, we can assume $I=\{1,2,\cdots,p\}$. Since both clusters ${\bf x}_{t_1},{\bf x}_{t_2}$ are connected with ${\bf x}_{t_0}$ by an $I$-sequence, we know that their $G$-matrices have the following form.
$$G_{t_1}=\begin{pmatrix}G(t_1)&0\\ W_1 &I_{n-p}\end{pmatrix}, G_{t_2}=\begin{pmatrix}G(t_2)&0\\ W_2&I_{n-p}\end{pmatrix},$$
where  $W_1, W_2$ are nonnegative matrices, by sign coherence of $G$-matrices.
Because $({\bf x}_t,{\bf x}_{t_1})$ and $({\bf x}_t,{\bf x}_{t_2})$ are $g$-pairs along $I$ and by Proposition \ref{prodef}, there exist $Q_1,Q_2\in M_{p\times n}(\mathbb Z_{\geq0})$ such that $$G(t_1)Q_1=G_{t_1}|_{I\times I}Q_1=G_t|_{I\times [1,n]}=G_{t_2}|_{I\times I}Q_2=G(t_2)Q_2.$$

It is easy to see that
 $$G_{t_1}\begin{pmatrix}Q_1\\W_2 Q_2\end{pmatrix}=\begin{pmatrix}G(t_1)&0\\ W_1 &I_{n-p}\end{pmatrix} \begin{pmatrix}Q_1\\W_2 Q_2\end{pmatrix}=\begin{pmatrix}G(t_2)&0\\ W_2&I_{n-p}\end{pmatrix} \begin{pmatrix}Q_2\\W_1 Q_1\end{pmatrix} =G_{t_2}\begin{pmatrix}Q_2\\W_1 Q_1\end{pmatrix}.$$
Note that $A=(a_{ij})_{n\times n}:=\begin{pmatrix}Q_1\\W_2 Q_2\end{pmatrix}$ and $F=(f_{ij})_{n\times n}:=\begin{pmatrix}Q_2\\W_1 Q_1\end{pmatrix}$ are non-negative matrices.
So the cluster monomials ${\bf x}_{t_1}^{{\bf a}_i}$ and ${\bf x}_{t_2}^{{\bf f}_i}$ have the same $g$-vectors for $i=1,\cdots,n$, where ${\bf a}_i, {\bf f}_i$ are $i$th column vectors of $A$ and $F$. By Theorem \ref{thmmonomial} and Proposition \ref{prosupport}, we get that ${\bf x}_{t_1}^{{\bf a}_i}$ and ${\bf x}_{t_2}^{{\bf f}_i}$ are equal and $Su({\bf x}_{t_1}^{{\bf a}_i})=Su({\bf x}_{t_2}^{{\bf f}_i})$ $i=1,\cdots,n$. Thus $\bigcup\limits_{i=1}^n Su({\bf x}_{t_1}^{{\bf a}_i})=\bigcup\limits_{i=1}^n Su({\bf x}_{t_2}^{{\bf f}_i})$.

By Theorem \ref{detgthm},  we get that  $G_t|_{I\times [1,n]}$ is row full rank, so $Q_1$ and $Q_2$ do not zero rows. Thus $\{x_{1;t_1},\cdots,x_{p;t_1}\}\subseteq \bigcup\limits_{i=1}^n Su({\bf x}_{t_1}^{{\bf a}_i})\subseteq {\bf x}_{t_1}$ and $\{x_{1;t_2},\cdots,x_{p;t_2}\}\subseteq \bigcup\limits_{i=1}^n Su({\bf x}_{t_2}^{{\bf f}_i})\subseteq {\bf x}_{t_2}$. By $\bigcup\limits_{i=1}^n Su({\bf x}_{t_1}^{{\bf a}_i})=\bigcup\limits_{i=1}^n Su({\bf x}_{t_2}^{{\bf f}_i})$, we know that $\{x_{1;t_1},\cdots,x_{p;t_1}\}\subseteq {\bf x}_{t_2}$ and $\{x_{1;t_2},\cdots,x_{p;t_2}\}\subseteq {\bf x}_{t_1}$.

Since both clusters ${\bf x}_{t_1},{\bf x}_{t_2}$ are connected with ${\bf x}_{t_0}$ by an $I$-sequence, we know that $x_{k;t_1}=x_{k;t_0}=x_{k;t_2}$ for $k=p+1,\cdots,n$. Thus we must have ${\bf x}_{t_1}={\bf x}_{t_2}$.
\end{proof}
\vspace{5mm}

\section{ Answers to
 Conjecture \ref{conjecture} and Conjecture \ref{conjecture2}}

In this section, we will give positive answers to Conjecture \ref{conjecture} and Conjecture \ref{conjecture2}, then we explain that why $d$-vectors are interesting.

\subsection{Answer to Conjecture \ref{conjecture2}}

 We start this subsection with a proposition which says the coefficients in cluster algebras do not affect the combinatorial structure of clusters of cluster algebras (see Proposition \ref{proclustervariable} for details). So when the properties we focus on are mainly on the combinatorial structure of clusters of cluster algebras, we can reduce to the case of cluster algebras with principal coefficients. Thus we can make full use of the enough $g$-pairs property.

\begin{Proposition}\label{proclustervariable}
Let $\mathcal A(\mathcal S(1)), \mathcal A(\mathcal S(2))$ be two skew-symmetrizable cluster algebras having the same exchange matrix at $t_0$.  Denoted by $({\bf x}_t(k),{\bf y}_t(k), B_t(k))$, the seed of $\mathcal A(\mathcal S(k))$ at $t\in\mathbb T_n$, $k=1,2$. The following hold.

 (i). $x_{i;t_1}(1)=x_{j;t_2}(1)$ if and only if $x_{i;t_1}(2)=x_{j;t_2}(2)$, where $t_1,t_2\in\mathbb T_n$ and $i,j\in\{1,2,\cdots,n\}$.

 (ii). If there exists a permutation $\sigma$ of $\{1,\cdots,n\}$ such that $x_{i;t_1}(1)=x_{\sigma(i);t_2}(1)$ for $i=1,\cdots,n$, then $y_{i;t_1}(1)=y_{\sigma(i);t_2}(1)$ and $b_{ij}^{t_1}(1)=b_{\sigma(i)\sigma(j)}^{t_2}(1)$ for any $i$ and $j$.
\end{Proposition}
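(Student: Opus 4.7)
The plan is to derive both parts from the Fomin--Zelevinsky separation formula together with Theorem \ref{thmmonomial}, which already settles the analogous statement in the principal coefficient algebra. Since $\mathcal A(\mathcal S(1))$ and $\mathcal A(\mathcal S(2))$ share the same exchange matrix $B_{t_0}$ at $t_0$, they also share the same $F$-polynomials $F_{i;t}$ and $g$-vectors $g(x_{i;t})$, which depend only on $B_{t_0}$ and the path from $t_0$ to $t$ in $\mathbb T_n$. The separation formula yields
\[
x_{i;t}(k) \;=\; \mathbf{x}_{t_0}(k)^{g(x_{i;t})}\,\frac{F_{i;t}(\hat{\mathbf{y}}(k))}{F_{i;t}|_{\mathbb P(k)}(\mathbf{y}(k))}\qquad (k=1,2),
\]
so both sides of (i) arise from one common ``principal'' Laurent polynomial by specialising the coefficients into the semifield $\mathbb P(k)$.

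For part (i), the forward direction follows from Theorem \ref{thmmonomial}: if $g(x_{i;t_1})=g(x_{j;t_2})$ then $x^{\mathrm{prin}}_{i;t_1}=x^{\mathrm{prin}}_{j;t_2}$ in the principal model $\mathcal A^{\mathrm{prin}}(B_{t_0})$, and specialising the coefficient semifield gives equality in each $\mathcal A(\mathcal S(k))$. For the converse, assume $x_{i;t_1}(k)=x_{j;t_2}(k)$ and cross-multiply the two separation formulas to obtain an equality of Laurent polynomials in $\mathbf{x}_{t_0}(k)$ over $\mathbb Z\mathbb P(k)$. The distinguished Laurent monomial $\mathbf{x}_{t_0}(k)^{g(x_{i;t_1})}$ on the left carries the positive coefficient $F_{j;t_2}|_{\mathbb P(k)}(\mathbf{y}(k))$ coming from the constant term $1$ of $F_{i;t_1}$ (Theorem \ref{thmfg}(ii)); matching it against the right-hand side and using the non-negativity of all $F$-polynomial coefficients forces $g(x_{i;t_1})=g(x_{j;t_2})$. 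A second application of Theorem \ref{thmmonomial} and specialisation then transfers the equality to $\mathcal A(\mathcal S(2))$.

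Once (i) is available, part (ii) reduces to seed recovery in the principal model. Applying (i) with $\mathcal A(\mathcal S(2))=\mathcal A^{\mathrm{prin}}(B_{t_0})$, the hypothesis $x_{i;t_1}(1)=x_{\sigma(i);t_2}(1)$ gives $g(x_{i;t_1})=g(x_{\sigma(i);t_2})$ for every $i$, i.e.\ the $G$-matrices satisfy $G_{t_1}=G_{t_2}P_\sigma$. Since $B_t$ is determined combinatorially by $B_{t_0}$ and the mutation path, and $y_{i;t}(1)$ is recovered from the $c$-vectors via $y_{i;t}(1)=\prod_j y_{j;t_0}(1)^{c_{ji}^t}$ with $C_t=(G_t^{\top})^{-1}$ (a consequence of sign-coherence), the permutation relating the $G$-matrices propagates to the same permutation relating the $C$-matrices, the $B$-matrices, and the $y$-tuples, yielding the asserted equalities. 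The main obstacle throughout is the converse in (i): because the $y_j(k)$ live in a not-necessarily-free semifield $\mathbb P(k)$, distinct principal Laurent monomials can a priori collapse under specialisation, so extracting the $g$-vector cleanly demands careful use of both the constant-term structure of the $F$-polynomials and the uniqueness furnished by Theorem \ref{thmmonomial}.
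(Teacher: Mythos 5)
There is a genuine gap, and it sits exactly where you flagged the ``main obstacle'': the step that extracts $g(x_{i;t_1})=g(x_{j;t_2})$ from $x_{i;t_1}(k)=x_{j;t_2}(k)$ over an arbitrary semifield $\mathbb P(k)$. After cross-multiplying the separation formulas you get an identity of Laurent polynomials in $\mathbf{x}_{t_0}(k)$ whose terms have exponents of the form $g_1+B_{t_0}\mathbf{v}$ on one side and $g_2+B_{t_0}\mathbf{w}$ on the other, with $\mathbf{v},\mathbf{w}\in\mathbb N^n$ ranging over the supports of the two $F$-polynomials. Matching the distinguished monomial $\mathbf{x}_{t_0}^{g_1}$ (whose coefficient is nonzero thanks to the constant term $1$ and the absence of cancellation in $\mathbb{NP}$) only tells you that $g_1\in g_2+B_{t_0}\mathbb N^n$, and symmetrically $g_2\in g_1+B_{t_0}\mathbb N^n$; hence $g_1-g_2=B_{t_0}\mathbf{w}$ with $B_{t_0}(\mathbf{v}+\mathbf{w})=0$. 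This forces $g_1=g_2$ only when $\ker B_{t_0}$ contains no nonzero nonnegative vector. For the Markov matrix (cyclic rank $3$), $(1,1,1)^{\top}$ lies in the kernel, distinct exponents $g+B_{t_0}\mathbf{v}$ collapse onto the same Laurent monomial after specialisation (e.g.\ in the coefficient-free case $\mathbb P=\{1\}$), and no choice of ``leading'' monomial survives: there is no linear functional positive on all columns of $B_{t_0}$. So positivity of the $F$-polynomial coefficients and the constant-term structure are not enough to recover the $g$-vector from the specialised expansion; this is essentially why the coefficient-independence of the exchange graph was a conjecture of Fomin--Zelevinsky rather than a leading-term exercise. (Your ``forward direction'' also silently assumes $g$-vector equality rather than the actual hypothesis $x_{i;t_1}(1)=x_{j;t_2}(1)$, so both implications of (i) hinge on this broken step.)

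The paper sidesteps the problem by replacing the $g$-vector with the $d$-vector, which is manifestly readable from the specialised Laurent expansion and is coefficient-independent by the recurrence of Proposition \ref{prodrec}: from $x_{i;t_1}(1)=x_{j;t_2}(1)$ one gets $d^{t_1}(x_{j;t_2}(1))=-\mathbf{e}_i$, transfers this to the algebra with principal coefficients \emph{at $t_1$}, and then invokes Corollary \ref{thmdvectors}(i) (a consequence of the enough $g$-pairs machinery, not of the separation formula alone) to conclude $x_{i;t_1}^{pr}=x_{j;t_2}^{pr}$, after which specialisation via \cite[Theorem 3.7]{FZ3} gives the statement in $\mathcal A(\mathcal S(2))$. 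Your treatment of (ii) is close in spirit to the paper's (which quotes \cite[Theorem 4]{GSV} for seed recovery from the cluster in the principal model and \cite[Theorem 4.6]{FZ3} to transfer $\mathbf{y}$ and $B$), though the formula $y_{i;t}=\prod_j y_{j;t_0}^{c_{ji}^t}$ you use is a tropical/principal-coefficient identity and still needs the \cite[Theorem 4.6]{FZ3} transfer to reach a general $\mathbb P$. To repair your argument you would need to either restrict to $B_{t_0}$ of full rank or replace the $g$-vector extraction by the $d$-vector route.
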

\begin{proof}
(i). Since $B_{t_0}(1)=B_{t_0}(2)$, we know that $B_{t}(1)=B_{t}(2)$ for any $t\in\mathbb T_n$.
Let $\mathcal A(\mathcal S^{pr})$ be the skew-symmetrizable cluster algebra with principal coefficients at $t_1$ and with initial exchange matrix $B_{t_1}(1)=B_{t_1}(2)$.
The seed of $\mathcal A(\mathcal S^{pr})$ at $t\in\mathbb T_n$ is denoted by $({\bf x}_t^{pr},{\bf y}_t^{pr},B_{t}^{pr})$.

If $x_{i;t_1}(1)=x_{j;t_2}(1)$, we know that $d^{t_1}(x_{j;t_2}(1))=-{\bf e}_i$, where ${\bf e}_i$ is the $i$-th column vector of $I_n$. Since the $d$-vectors do not depend on the choice of coefficients, we know that $d^{t_1}(x_{j;t_2}^{pr})=-{\bf e}_i$. Then by Corollary \ref{thmdvectors}, we have that $x_{i;t_1}^{pr}=x_{j;t_2}^{pr}$.
Thus we obtain that $x_{i;t_1}(2)=x_{j;t_2}(2)$ from Theorem 3.7 of \cite{FZ3}.
By the same argument, we can show that if $x_{i;t_1}(2)=x_{j;t_2}(2)$, then $x_{i;t_1}(1)=x_{j;t_2}(1)$.

(ii). By (i), we know if there exists a permutation $\sigma$ of $\{1,\cdots,n\}$ such that $x_{i;t_1}(1)=x_{\sigma(i);t_2}(1)$ for $i=1,\cdots,n$, then $x_{i;t_1}^{pr}=x_{\sigma(i);t_2}^{pr}$ for any $i$.
Then by \cite[Theorem 4]{GSV}, we have $y_{i;t_1}^{pr}=y_{\sigma(i);t_2}^{pr}$ and $b_{ij}^{t_1}(pr)=b_{\sigma(i)\sigma(j)}^{t_2}(pr)$. Then the result follows from \cite[Theorem 4.6]{FZ3}.
\end{proof}

The following theorem is an affirmation to  Conjecture \ref{conjecture2}.

\begin{Theorem}\label{corconnected}
For any skew-symmetrizable cluster algebra $\mathcal A(\mathcal S)$,  the seeds of $\mathcal A(\mathcal S)$ whose clusters contain particular cluster variables form a connected subgraph of the exchange graph of $\mathcal A(\mathcal S)$.
\end{Theorem}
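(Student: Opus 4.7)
By Proposition~\ref{proclustervariable}, the combinatorial property that a given cluster contains prescribed cluster variables is independent of the choice of coefficients. I therefore pass to the principal-coefficient cluster algebra at some vertex $t_1$ whose cluster contains $x_1,\ldots,x_p$; after relabeling, $x_j=x_{j;t_1}$ for $j\leq p$. To prove connectivity, it suffices to show that every other cluster ${\bf x}_{t_2}$ containing $\{x_1,\ldots,x_p\}$ is reachable from ${\bf x}_{t_1}$ by a sequence of mutations in directions $I=\{p+1,\ldots,n\}$, since such mutations preserve the particular variables and paths through ${\bf x}_{t_1}$ then connect any two clusters in the subgraph.

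Fix such an ${\bf x}_{t_2}$, relabeled so that $x_j=x_{j;t_2}$ for $j\leq p$. Applying Theorem~\ref{thmenough} to ${\bf x}_{t_2}$ with the subset $I$ yields a cluster ${\bf x}_{t'}$ for which $({\bf x}_{t_2},{\bf x}_{t'})$ is a $g$-pair along $I$ and ${\bf x}_{t'}$ is reached from ${\bf x}_{t_1}$ by an $I$-sequence. The corresponding mutation path lies entirely in the desired subgraph and terminates at ${\bf x}_{t'}$, so the theorem reduces to verifying ${\bf x}_{t_2}={\bf x}_{t'}$. Because $x_1,\ldots,x_p$ have $g$-vectors $e_1,\ldots,e_p$ in both clusters, sign-coherence forces the block forms $G_{t_2}=\begin{pmatrix}I_p&A\\0&H\end{pmatrix}$ and $G_{t'}=\begin{pmatrix}I_p&A'\\0&H'\end{pmatrix}$ with $A,A'\geq 0$ and $\det H=\det H'=\pm 1$; Proposition~\ref{prodef} then converts the $g$-pair condition into a factorisation $H=H'M$ with $M\in M_{n-p}(\mathbb Z_{\geq 0})$ and $\det M=\pm 1$.

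Granting the matrix identity $A=A'M$, for each $j=1,\ldots,n-p$ the cluster monomial ${\bf x}_{t'}^{\pi_I^{\top}m_j}$ (with $m_j$ the $j$-th column of $M$) has the same $g$-vector as $x_{p+j;t_2}$, so Theorem~\ref{thmmonomial} gives equality of these cluster monomials and Lemma~\ref{lemvariable} forces $\pi_I^{\top}m_j$ to be a standard basis vector. Hence $M$ is a permutation matrix and $x_{p+j;t_2}\in{\bf x}_{t'}$ for every $j$; combined with the matching of $x_1,\ldots,x_p$, this gives ${\bf x}_{t_2}={\bf x}_{t'}$, completing the argument.

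The main obstacle is therefore the matrix identity $A=A'M$. My plan is a two-sided inequality argument: the leading-exponent vector $r_j=G_{t'}^{-1}g(x_{p+j;t_2})$ has top block $a_j-A'm_j$ (with $a_j$ the $j$-th column of $A$) and bottom block $m_j$, so bounding the coordinates $d^{t'}_k(x_{p+j;t_2})$ for $k\leq p$ from above controls the signs of the top block. These bounds I would extract via Lemma~\ref{lemdv} applied to auxiliary $g$-pairs $({\bf x}_{t_2},{\bf x}_{s_k})$ along the larger subsets $I_k=\{1,\ldots,n\}\setminus\{k\}$ obtained from the existence theorem, using that $x_{p+j;t_2}\neq x_k$ in ${\bf x}_{t_2}$ for $k\leq p$ and that $d_k$ is preserved under $I_k$-mutations; this should yield $A\geq A'M$ entrywise. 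A symmetric argument, legitimised by Proposition~\ref{proclustervariable} and a reapplication of Theorem~\ref{thmenough} to ${\bf x}_{t_1}$ in the principal-coefficient algebra at $t_2$, supplies the reverse inequality $A'M\geq A$. Closing this step is the delicate part of the proof, since the $g$-pair condition alone only constrains the last $n-p$ coordinates of $g$-vectors and the first $p$ coordinates must be controlled by this separate $d$-vector analysis.
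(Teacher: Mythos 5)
Your reduction to principal coefficients, the invocation of Theorem~\ref{thmenough} to produce ${\bf x}_{t'}$, the block forms of $G_{t_2}$ and $G_{t'}$, and the endgame (Theorem~\ref{thmmonomial} plus Lemma~\ref{lemvariable} once full $g$-vector equality of cluster monomials is in hand) all match the paper's strategy. But the proof is not complete: everything hinges on the matrix identity $A=A'M$, which you explicitly leave as a ``plan,'' and the plan as sketched does not close. The $g$-pair condition via Proposition~\ref{prodef} only controls the rows indexed by $I$, so the first $p$ coordinates of the $g$-vectors are genuinely unconstrained by it, as you note. Your proposed fix --- bounding $d^{t'}_k(x_{p+j;t_2})$ for $k\le p$ via Lemma~\ref{lemdv} applied to auxiliary $g$-pairs along $I_k=\{1,\dots,n\}\setminus\{k\}$ --- runs into a circularity: knowing only that $x_{p+j;t_2}\ne x_k$ rules out $d_k=-1$ but leaves both $d_k=0$ and $d_k>0$ open; to conclude $d_k\le 0$ from Lemma~\ref{lemdv}(iii) you would need $x_{p+j;t_2}\in{\bf x}_{s_k}$, which is a statement of exactly the kind you are trying to prove. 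The clean way to get $d_k\le 0$ from ``$x_{p+j;t_2}$ and $x_k$ lie in a common cluster'' is Theorem~\ref{thmanswer}(ii), whose proof in this paper depends on Theorem~\ref{corconnected} itself. The ``symmetric argument'' for the reverse inequality, obtained by changing the base point of the principal coefficients, is likewise unjustified, since all the matrices $A,A',M$ change with the base point.

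The paper's proof avoids this entire difficulty with a padding trick that sidesteps any comparison of the $A$-blocks. In your notation: the $g$-pair condition gives, for each cluster variable $x_{k;t_2}$, a cluster monomial ${\bf x}_{t'}^{\bf v}$ with
$g(x_{k;t_2})=(g_1,\dots,g_p,g_{p+1},\dots,g_n)^{\top}$ and $g({\bf x}_{t'}^{\bf v})=(g_1,\dots,g_p,g'_{p+1},\dots,g'_n)^{\top}$ agreeing in the $I$-coordinates (in the paper's labelling, the shared variables occupy positions $p+1,\dots,n$ and $I=\{1,\dots,p\}$). Row sign-coherence forces the mismatched coordinates $g_i,g'_i$ ($i\notin I$) to be nonnegative, and the shared variables $x_{i;t_0}$ lie in \emph{both} clusters with $g$-vector ${\bf e}_i$. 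Hence $x_{k;t_2}\prod_i x_{i;t_2}^{g'_i}$ and ${\bf x}_{t'}^{\bf v}\prod_i x_{i;t'}^{g_i}$ are genuine cluster monomials with \emph{identical} full $g$-vectors; Theorem~\ref{thmmonomial} makes them equal and Proposition~\ref{prosupport} puts $x_{k;t_2}$ into ${\bf x}_{t'}$. This is the idea missing from your argument, and without it (or a noncircular proof of $A=A'M$) the proof does not go through.
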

\begin{proof}
For the cluster variables $x_{p+1;t_0},\cdots,x_{n;t_0}$ of the cluster ${\bf x}_{t_0}$, we consider the seeds of $\mathcal A(\mathcal S)$ whose clusters contain these variables. We need to check that if these seeds form a connected subgraph of the exchange graph of $\mathcal A(\mathcal S)$.

Thanks to Proposition \ref{proclustervariable}, we can assume that $\mathcal A(\mathcal S)$ is a cluster algebra with principal coefficients at $t_0$. Let ${\bf x}_t$ be a cluster containing the variables  $x_{p+1;t_0},\cdots,x_{n;t_0}$.
 Without  loss of generality, we can assume that $x_{i;t}=x_{i;t_0}$ for $i=p+1,\cdots,n$.

  By Theorem \ref{thmenough}, we know that $\mathcal A(\mathcal S)$ has  the enough $g$-pairs property. Thus for $I=\{1,2,\cdots, p\}$ and the cluster ${\bf x}_t$, there exists a cluster ${\bf x}_{t^\prime}$ such that $({\bf x}_t,{\bf x}_{t^\prime})$ is a $g$-pair along $I$. We will show that ${\bf x}_t={\bf x}_t^\prime$, thus ${\bf x}_t$ is connected with ${\bf x}_{t_0}$ by a $\{1,2,\cdots,p\}$-sequence.

 Since $({\bf x}_t,{\bf x}_{t^\prime})$ is a $g$-pair along $I$, we know that for the cluster variable $x_{k;t}$ (as a cluster monomial in ${\bf x}_t$), there exists a cluster monomial ${\bf x}_{t^{\prime}}^{{\bf v}}$ in ${\bf x}_{t^{\prime}}$ with $v_i=0$  for $i\notin I$ such that
$$\pi_I(g(x_{k;t}))=\pi_I(g({\bf x}_{t^{\prime}}^{{\bf v}}))=\pi_I(G_{t^{\prime}}{\bf v}).$$

We know that the $G$-matrix of ${\bf x}_{t^\prime}$ has the form of $G_{t^{\prime}}=\begin{pmatrix}G(t^{\prime})&0\\ \ast& I_{n-p}\end{pmatrix}$. By the assumption, the $G$-matrix of ${\bf x}_t$ also has the form of $G_{t}=\begin{pmatrix}G(t)&0\\ \star& I_{n-p}\end{pmatrix}$.
By row sign-coherence of $G$-matrices, we know that both $\ast$ and $\star$ are nonnegative matrices.
Because $\pi_I(g(x_{k;t}))=\pi_I(g({\bf x}_{t^{\prime}}^{{\bf v}}))=\pi_I(G_{t^{\prime}}{\bf v})$, we can assume that
\begin{eqnarray}
g(x_{k;t})&=&(g_1,\cdots,g_p,g_{p+1},\cdots,g_n)^{\top};\nonumber\\
g({\bf x}_{t^{\prime}}^{{\bf v}})&=&G_{t^{\prime}}{\bf v}=(g_1,\cdots,g_p,g_{p+1}^{\prime},\cdots,g_n^{\prime})^{\top},\nonumber
\end{eqnarray}
where $g_i,g_i^{\prime}\geq 0$ for $i=p+1,\cdots,n$. Then it is easy to see that $x_{k;t}\prod\limits_{i=p+1}^nx_{i;t}^{g_i^{\prime}}$ and ${\bf x}_{t^{\prime}}^{{\bf v}}\prod\limits_{i=p+1}^nx_{i;t^{\prime}}^{g_i}$ are cluster monomials in ${\bf x}_t$ and in ${\bf x}_{t^{\prime}}$ respectively and
\begin{eqnarray}
g(x_{k;t}\prod\limits_{i=p+1}^nx_{i;t}^{g_i^{\prime}})=(g_1,\cdots,g_p,g_{p+1}+g_{p+1}^{\prime},
\cdots,g_{n}+g_{n}^{\prime})^{\top}=g({\bf x}_{t^{\prime}}^{{\bf v}}\prod\limits_{i=p+1}^nx_{i;t^{\prime}}^{g_i}).\nonumber
\end{eqnarray}
This means that the cluster monomials $x_{k;t}\prod\limits_{i=p+1}^nx_{i;t}^{g_i^{\prime}}$ and ${\bf x}_{t^{\prime}}^{{\bf v}}\prod\limits_{i=p+1}^nx_{i;t^{\prime}}^{g_i}$ have the same $g$-vector. By Theorem \ref{thmmonomial}, we get that $$x_{k;t}\prod\limits_{i=p+1}^nx_{i;t}^{g_i^{\prime}}={\bf x}_{t^{\prime}}^{{\bf v}}\prod\limits_{i=p+1}^nx_{i;t^{\prime}}^{g_i}.$$

By Proposition \ref{prosupport}, we know that $x_{k;t}$ is a cluster variable in ${\bf x}_{t^\prime}$.
 Since $k$ is arbitrary, we obtain that ${\bf x}_{t}$ and ${\bf x}_{t^{\prime}}$ have the same cluster variables. So ${\bf x}_t$
is connected with ${\bf x}_{t_0}$ by a $\{1,2,\cdots,p\}$-sequence. By Proposition \ref{proclustervariable} (ii) or \cite[Theorem 4]{GSV}, each seed of $\mathcal A(\mathcal S)$ is determined by its cluster, we know that the seed containing ${\bf x}_{t}$ and the seed containing ${\bf x}_{t_0}$ are connected by a $\{1,\cdots,p\}$-sequence and each seed appearing in this sequence containing the cluster variables $x_{p+1;t_0},\cdots,x_{n;t_0}$. So the seeds of $\mathcal A(\mathcal S)$ whose clusters contain particular cluster variables form a connected subgraph of the exchange graph of $\mathcal A(\mathcal S)$.
\end{proof}

\subsection{Answer to Conjecture \ref{conjecture}}

The following theorem gives the affirmation to Conjecture \ref{conjecture}.

\begin{Theorem}\label{thmanswer}

Let  $\mathcal A(\mathcal S)$ is a skew-symmetrizable cluster algebra, and $d^{t_0}(x_{i;t}) = (d_1,\cdots,d_n)^{\top}$ be the $d$-vector of the cluster variable $x_{i;t}$ with respect to the cluster ${\bf x}_{t_0}$ of $\mathcal A(\mathcal S)$. Then

(i).  $d_k$ depends only on $x_{i;t}$ and $x_{k;t_0}$, not on the clusters containing $x_{k;t_0}$, where $k=1,\cdots,n$;

(ii).  $d_k\geq -1$ for $k=1,\cdots,n$, and in details,
\begin{eqnarray}
d_k=\begin{cases} -1~,& \text{iff}\;\; x_{i,t}=x_{k,t_0};\\ 0~,& \text{iff}\;\;x_{i,t}\not=x_{k,t_0}\;\;\text{and}\;\; x_{i,t}, x_{k,t_0}\in {\bf x}_{t^\prime} \; \text{for some}\; t^\prime;\\ \text{a positive integer}~, & \text{iff}\;\; \text{there exists no cluster } {\bf x}_{t^\prime} \text{ containing both }x_{i,t} \text{ and }x_{k,t_0} .\end{cases}\nonumber
\end{eqnarray}
In particular, if $x_{i;t}\notin{\bf x}_{t_0}$, then $d^{t_0}(x_{i;t})\in\mathbb N^n$, that is, the positivity of $d$-vectors holds.

\end{Theorem}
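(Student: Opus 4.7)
The plan is to combine the already-established Corollary \ref{thmdvectors}, which supplies the forward half of (ii), with Theorem \ref{corconnected}, which supplies the connectedness needed to prove (i). First I would invoke the remark following Proposition \ref{prodrec}: denominator vectors are independent of the choice of coefficient system, so I may freely replace $\mathcal{A}(\mathcal{S})$ by the cluster algebra with principal coefficients at $t_0$ sharing the same initial exchange matrix. With that reduction, the results of Sections 4 and 5, together with Corollary \ref{thmdvectors} and Theorem \ref{corconnected}, all become available.

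For part (i), fix the cluster variable $x_{i;t}$ and the initial cluster variable $x_{k;t_0}$. Suppose ${\bf x}_{t_0'}$ is another cluster containing $x_{k;t_0}$; after a permutation of indices I may assume $x_{k;t_0'} = x_{k;t_0}$. By Theorem \ref{corconnected}, the seeds whose clusters contain $x_{k;t_0}$ form a connected subgraph of the exchange graph, so there is a mutation sequence from the seed at $t_0$ to the seed at $t_0'$ all of whose intermediate clusters still contain $x_{k;t_0}$; in particular every mutation along this path is in a direction different from the position occupied by $x_{k;t_0}$. Proposition \ref{prodvectors} then forces the component of the $d$-vector corresponding to $x_{k;t_0}$ to be preserved at every step, giving $d_k^{t_0}(x_{i;t}) = d_k^{t_0'}(x_{i;t})$.

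For part (ii), Corollary \ref{thmdvectors} already yields $d_k \in \{-1,0\} \cup \mathbb{Z}_{>0}$ together with the forward implications $d_k = -1 \Rightarrow x_{i;t} = x_{k;t_0}$ and $d_k = 0 \Rightarrow$ some cluster contains both $x_{i;t}$ and $x_{k;t_0}$. The reverse implications are elementary: if $x_{i;t} = x_{k;t_0}$, the Laurent expansion of $x_{i;t}$ in ${\bf x}_{t_0}$ is the single variable $x_{k;t_0}$, giving $d^{t_0}(x_{i;t}) = -{\bf e}_k$; if $x_{i;t} \neq x_{k;t_0}$ but some ${\bf x}_{t'}$ contains both, writing $x_{i;t} = x_{j;t'}$ and $x_{k;t_0} = x_{l;t'}$ with $j \neq l$, the $d$-vector $d^{t'}(x_{i;t}) = -{\bf e}_j$ has $l$-th entry zero, and applying (i) to the clusters ${\bf x}_{t_0}$ and ${\bf x}_{t'}$ gives $d_k^{t_0}(x_{i;t}) = d_l^{t'}(x_{i;t}) = 0$. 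Since the three cases are mutually exclusive and exhaustive, each ``iff'' follows. Positivity is then an immediate corollary: if $x_{i;t} \notin {\bf x}_{t_0}$, then $x_{i;t} \neq x_{k;t_0}$ for every $k$, so no $d_k$ equals $-1$, hence $d^{t_0}(x_{i;t}) \in \mathbb{N}^n$.

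The main obstacle is part (i), which is precisely where the substantial input of the enough $g$-pairs property enters, through Theorem \ref{corconnected}; the rest of the argument reduces to bookkeeping once Corollary \ref{thmdvectors} and (i) are in hand.
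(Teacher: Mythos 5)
Your proposal is correct and follows essentially the same route as the paper: part (i) via Theorem \ref{corconnected} combined with Proposition \ref{prodvectors} to propagate the $k$-th component along a mutation path avoiding the position of $x_{k;t_0}$, and part (ii) by pairing the forward implications from Corollary \ref{thmdvectors} with the elementary converses (using (i) for the $d_k=0$ case) and concluding by mutual exclusivity. No gaps.
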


\begin{proof}

(i). Let ${\bf x}_{t^\prime}$ be another cluster of $\mathcal A(\mathcal S)$ containing $x_{k;t_0}$, and $d^{t^\prime}(x_{i;t})=(d_1^\prime,\cdots,d_n^\prime)^{\top}$ be the $d$-vector of $x_{i;t}$ with respect to ${\bf x}_{t^\prime}$.
 Without loss of generality, we can assume that $x_{k;t^\prime}=x_{k;t_0}$. By Theorem \ref{corconnected}, the cluster ${\bf x}_{t^\prime}$ is connected with ${\bf x}_{t_0}$ by a $\{1,2,\cdots,n\}\setminus\{k\}$-sequence.
 Then by Proposition \ref{prodvectors}, we get that $d_k^\prime=d_k$.

(ii). $d_k\geq -1$ follows from Corollary  \ref{thmdvectors} directly. The important thing is when $d_k$ takes values at $-1,0$, a positive integer.

 By Proposition \ref{prodrec}, we know that the notion of $d$-vectors is independent of the choice of coefficient system. Thanks to Proposition \ref{proclustervariable}, we can assume that  $\mathcal A(\mathcal S)$ is a skew-symmetrizable cluster algebra with principal coefficients at $t_0$.

(a). If  $x_{i;t}=x_{k;t_0}$, clearly, $d_k=-1$. Conversely, if $d_k=-1$, then $x_{i;t}=x_{k;t_0}$ follows from  Corollary \ref{thmdvectors} (i).

(b). By Corollary  \ref{thmdvectors}, if $d_k=0$, then there exists a cluster ${\bf x}_{t^{\prime}}$ containing both $x_{i;t}$ and $x_{k;t_0}$. Since $d_k=0$, we must have $x_{i;t}\neq x_{k;t_0}$.

Conversely, suppose that $x_{i;t}\neq x_{k;t_0}$ and there exists a cluster ${\bf x}_{t^{\prime}}$ containing both $x_{i;t}$ and $x_{k;t_0}$. Without loss of generality, we can assume that $x_{i;t}=x_{1;t^\prime}$ and $x_{k;t_0}=x_{2;t^\prime}$.
By (i), we know that $d_k$ does not depend on the cluster containing $x_{k;t_0}=x_{2;t^\prime}$, so $d_k$ is equal to the $2$-th component of $d^{t^\prime}(x_{i;t})$. Since $d^{t^\prime}(x_{i;t})=d^{t^\prime}(x_{1;t^{\prime}})=-{\bf e}_1$, we get $d_k=0$.

(c). By (a) and (b), $d_k\leq 0$ if and only if there exists a cluster containing both $x_{i,t}$ and $x_{k,t_0}$. So $d_k>0$ if and only if there exists no cluster containing both $x_{i,t}$ and $x_{k,t_0}$.

(d). We have proved that $d_k\geq -1$, and if $d_k=-1$, then $x_{i;t}\in {\bf x}_{t_0}$.
So if $x_{i;t}\notin{\bf x}_{t_0}$, then each $d_k$ is nonnegative, and we thus $d^{t_0}(x_{i;t})\in\mathbb N^n$.
The proof is finished.
\end{proof}

 As a corollary, we can get the sign-coherence of $D$-matrices, which was conjectured in \cite{FZ3}.
\begin{Corollary}
Suppose that $\mathcal A(\mathcal S)$ is a skew-symmetrizable cluster algebra with coefficients in $\mathbb {ZP}$, and ${\bf x}_t,{\bf x}_{t_0}$ be two clusters of $\mathcal A(\mathcal S)$. Let $D_t^{t_0}$ be the $D$-matrix of ${\bf x}_t$ with respect to the cluster ${\bf x}_{t_0}$, then any two nonzero entries of $D_t$ in the same row or in the same column have the same sign.
\end{Corollary}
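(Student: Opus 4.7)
The plan is to reduce the statement entirely to the trichotomy already established in Theorem \ref{thmanswer}(ii). The $i$-th column of $D_t^{t_0}$ is by definition the $d$-vector $d^{t_0}(x_{i;t})=(d_1,\ldots,d_n)^\top$, so every entry of $D_t^{t_0}$ is of this form and Theorem \ref{thmanswer}(ii) tells us that each $d_k$ lies in $\{-1,0\}\cup\mathbb Z_{>0}$, with the special value $-1$ appearing exactly when the corresponding cluster variable coincides with an initial cluster variable. The rest is a short case analysis.

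For column sign-coherence, I fix a column index $i$. If $x_{i;t}\notin{\bf x}_{t_0}$, then by Theorem \ref{thmanswer}(ii) every entry of the column $d^{t_0}(x_{i;t})$ is nonnegative, and there is nothing to check. Otherwise $x_{i;t}=x_{k;t_0}$ for exactly one index $k$, because the cluster variables of ${\bf x}_{t_0}$ are algebraically independent, and in this case $d^{t_0}(x_{i;t})=-{\bf e}_k$ has only one nonzero entry; again sign-coherence is automatic.

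For row sign-coherence, I fix a row index $k$ and suppose some entry of that row is negative, say $d^{t_0}_k(x_{i;t})=-1$. By Theorem \ref{thmanswer}(ii) this forces $x_{i;t}=x_{k;t_0}$, so $x_{k;t_0}$ belongs to the cluster ${\bf x}_t$. Then for any column index $j\neq i$, the variable $x_{j;t}$ is distinct from $x_{i;t}=x_{k;t_0}$, and both $x_{j;t}$ and $x_{k;t_0}$ lie in the common cluster ${\bf x}_t$; hence the second clause of Theorem \ref{thmanswer}(ii) gives $d^{t_0}_k(x_{j;t})=0$. Consequently row $k$ equals $-{\bf e}_i^\top$ and is trivially sign-coherent. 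If no entry in row $k$ is negative, every entry is in $\{0\}\cup\mathbb Z_{>0}$ and there is again nothing to check.

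There is effectively no obstacle here: the whole argument is a case-check that leverages the sharp form of Theorem \ref{thmanswer}(ii). The only place that needs mild care is verifying that $-1$ in row $k$ can occur for at most one column (because of the distinctness of the cluster variables in ${\bf x}_t$), and symmetrically that a column contains $-1$ at most once. Once these are in hand, no row or column can simultaneously carry a $-1$ and a positive entry, which is precisely what sign-coherence requires.
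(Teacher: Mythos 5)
Your proof is correct and matches the paper's intent exactly: the paper states this corollary without an explicit argument, presenting it as an immediate consequence of the trichotomy in Theorem \ref{thmanswer}, and your case analysis (each row or column containing a $-1$ must in fact equal $\pm{\bf e}$ of some index, and otherwise all entries are nonnegative) is precisely the routine verification the paper leaves to the reader.
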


\subsection{Positivity, $d$-vectors and linear independence of cluster monomials} In this part, we give our reason why $d$-vectors are interesting from our perspective. Roughly speaking, the positivity of $d$-vectors can build a bridge between the positivity of cluster variables and the linear independence of cluster monomials.

Fix a cluster ${\bf x}_t$ and a vector ${\bf a}\in\mathbb Z^n$, recall that if ${\bf a}\in\mathbb N^n$,  then ${\bf x}_t^{\bf a}$ is called a {\em cluster monomial} in ${\bf x}_t$ and if ${\bf a}\in\mathbb Z^n\backslash \mathbb N^n$, then ${\bf x}_t^{\bf a}$ is called a {\em proper Laurent monomial} in ${\bf x}_t$.

 Denote by $CM(t)$  the set of cluster monomials in ${\bf x}_t$. It is easy to see that if ${\bf x}_{t_1}$ and ${\bf x}_{t_2}$ have common cluster variables, then $CM(t_1)\cap CM(t_2)\not=\emptyset$.

\begin{Definition}(\cite{CLF,CKLP})\label{deflaurent}
 A cluster algebra $\mathcal A(\mathcal S)$  is said to have  the {\bf proper Laurent monomial property} if
for any two clusters ${\bf x}_t$ and ${\bf x}_{t_0}$ of $\mathcal A(\mathcal S)$ and any cluster monomial ${\bf x}_t^{\bf a}\in CM(t)\setminus CM(t_0)$, ${\bf x}_{t}^{\bf a}$ is a $\mathbb {ZP}$-linear combination of proper Laurent monomials in ${\bf x}_{t_0}$.
\end{Definition}

The proper Laurent monomial property is a very interesting property for cluster algebra, which can be seen from the following theorem.

\begin{Theorem}\label{lemproper}
 (\cite{CLF}) If a cluster algebra $\mathcal A(\mathcal S)$ has the proper Laurent monomial property, then its cluster monomials are linearly independent over $\mathbb {ZP}$.
\end{Theorem}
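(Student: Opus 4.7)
The plan is to establish linear independence by contradiction, using the proper Laurent monomial property together with the algebraic independence of the variables in any single cluster.

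First, I would suppose there is a non-trivial $\mathbb{ZP}$-linear relation $\sum_{i=1}^{N} c_i M_i = 0$ among distinct cluster monomials $M_1,\dots,M_N$, with every $c_i \neq 0$ (after discarding zero terms). The goal is to find a cluster whose Laurent-expansion viewpoint exposes a non-cancellable term, forcing some $c_i$ to vanish.

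Next, I would pick any index $i_0$ and choose a cluster ${\bf x}_{t_0}$ with $M_{i_0} \in CM(t_0)$; such a cluster exists because $M_{i_0}$ is by definition a cluster monomial in some cluster. I then partition the remaining indices into $J_0 = \{\, j : M_j \in CM(t_0)\,\}$ and $J_1 = \{\, j : M_j \notin CM(t_0)\,\}$. For $j \in J_0 \cup \{i_0\}$ the monomial $M_j$ equals ${\bf x}_{t_0}^{{\bf a}_j}$ with ${\bf a}_j \in \mathbb{N}^n$, and since the $M_j$ are distinct and ${\bf x}_{t_0}$ is a transcendence basis of $\mathcal{F}$ over $\text{Frac}(\mathbb{ZP})$, the exponent vectors ${\bf a}_j$ are pairwise distinct. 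For $j \in J_1$, the proper Laurent monomial property (Definition~\ref{deflaurent}) writes $M_j$ as a $\mathbb{ZP}$-linear combination of proper Laurent monomials in ${\bf x}_{t_0}$, i.e.\ Laurent monomials ${\bf x}_{t_0}^{\bf v}$ with at least one negative entry in ${\bf v}$.

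Now I would expand the relation $\sum c_i M_i = 0$ as a $\mathbb{ZP}$-linear combination of Laurent monomials in ${\bf x}_{t_0}$ and isolate the coefficient of ${\bf x}_{t_0}^{{\bf a}_{i_0}}$. Since ${\bf a}_{i_0} \in \mathbb{N}^n$, this particular monomial cannot receive any contribution from indices in $J_1$ (those contribute only proper Laurent monomials), and among indices in $J_0 \cup \{i_0\}$ only $i_0$ itself contributes because the ${\bf a}_j$ are distinct. Hence the coefficient of ${\bf x}_{t_0}^{{\bf a}_{i_0}}$ equals $c_{i_0}$. By the $\mathbb{ZP}$-linear independence of distinct Laurent monomials in the transcendence basis ${\bf x}_{t_0}$, we conclude $c_{i_0} = 0$, contradicting our assumption. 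Since $i_0$ was arbitrary, no non-trivial dependence can exist.

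The argument is short and essentially formal once the proper Laurent monomial property is invoked; the only substantive point to be careful about is the disjointness of the two types of Laurent monomials (non-negative exponent versus proper) and the use of algebraic independence of a cluster to separate coefficients. I do not anticipate a genuine obstacle here, so the main task is to present the dichotomy ``$M_j \in CM(t_0)$ contributes a genuine cluster monomial in ${\bf x}_{t_0}$, while $M_j \notin CM(t_0)$ contributes only proper Laurent monomials'' cleanly enough that the coefficient extraction becomes transparent.
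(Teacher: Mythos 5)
Your proposal is correct and follows essentially the same route as the paper: fix a cluster $\mathbf{x}_{t_0}$ containing the targeted cluster monomial, split the relation into the part lying in $CM(t_0)$ and the part that the proper Laurent monomial property turns into proper Laurent monomials, and read off the coefficient of the targeted monomial to force it to vanish. Your version is marginally more explicit about the distinctness of exponent vectors within $CM(t_0)$, but the argument is the same.
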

\begin{proof}
In order to make the readers to have a good sense of the proper Laurent monomial property, we repeat the proof  here.

Suppose that $\sum\limits_{t, {\bf v}} c_{t,{\bf v}}{\bf x}_t^{\bf v}=0$, where ${\bf v}\in\mathbb N^n, c_{t,{\bf v}}\in\mathbb {ZP}$. Fix a cluster ${\bf x}_{t_0}$ and a ${\bf v}_0\in \mathbb N^n$, by the proper Laurent monomial property,  each ${\bf x}_t^{\bf v}\notin CM(t_0)$ is a sum of proper Laurent monomials in $\mathbb{ZP}[x_{1;t_0}^{\pm1},\cdots,x_{n;t_0}^{\pm1}]$. Thus
$\sum\limits_{t, {\bf v}} c_{t,{\bf v}}{\bf x}_t^{\bf v}$ can be written as a Laurent polynomial in $\mathbb{ZP}[x_{1;t_0}^{\pm1},\cdots,x_{n;t_0}^{\pm1}]$ with the form of $\Sigma_1+\Sigma_2$,
where $\Sigma_1$ is a sum of monomials in $\mathbb{ZP}[x_{1;t_0}^{\pm1},\cdots,x_{n;t_0}^{\pm1}]$ and $\Sigma_2$ is a sum of proper Laurent monomials in $\mathbb{ZP}[x_{1;t_0}^{\pm1},\cdots,x_{n;t_0}^{\pm1}]$.
The coefficient of ${\bf x}_{t_0}^{{\bf v}_0}$  in $\Sigma_1$, as well as in $\Sigma_1+\Sigma_2$, is precisely $c_{t_0,{\bf v}_0}$.
  Then, $\Sigma_1+\Sigma_2=0$ results in $c_{t_0,{\bf v}_0}=0$.
 And thus cluster monomials of $\mathcal A(\mathcal S)$ are linearly independent
over $\mathbb {ZP}$.
\end{proof}

Recall that the {\em positivity of $d$-vectors} means that the $d$-vectors of non-initial cluster variables are non-negative vectors. Now we give the main result in this subsection, which builds a bridge between the positivity of cluster variables and the linear independence of cluster monomials via the positivity of $d$-vectors. It is very interesting for us to understand the linear independence of cluster monomials from this perspective.

\begin{Proposition}\label{thmproper}
The positivity of cluster variables and the positivity of $d$-vectors of a cluster algebra $\mathcal A(\mathcal S)$ imply the linear linear independence of cluster monomials of $\mathcal A(\mathcal S)$.
\end{Proposition}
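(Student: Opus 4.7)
By Theorem~\ref{lemproper} it suffices to show that the two positivity hypotheses together imply the proper Laurent monomial property of Definition~\ref{deflaurent}; linear independence of cluster monomials then follows immediately. The target is therefore: for any cluster monomial ${\bf x}_t^{\bf a}\in CM(t)\setminus CM(t_0)$, the Laurent expansion of ${\bf x}_t^{\bf a}$ with respect to ${\bf x}_{t_0}$ uses only proper Laurent monomials.

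The plan is as follows. First I would use the positivity of cluster variables to write the expansion ${\bf x}_t^{\bf a}=\sum_{{\bf v}} c_{\bf v}\,{\bf x}_{t_0}^{\bf v}$ with every $c_{\bf v}\in\mathbb{NP}$; because there is no cancellation in a non-negative sum, this already forces the additivity of $d$-vectors along products, $d^{t_0}({\bf x}_t^{\bf a})=\sum_i a_i\,d^{t_0}(x_{i;t})$, together with the corresponding factorization of the numerator $f:={\bf x}_t^{\bf a}\cdot{\bf x}_{t_0}^{d^{t_0}({\bf x}_t^{\bf a})}$ as $f=\prod_{x_{i;t}\notin{\bf x}_{t_0}} f_i^{a_i}$, where $f_i$ is the numerator of the non-initial cluster variable $x_{i;t}$. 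By positivity of $d$-vectors, each $d^{t_0}(x_{i;t})$ appearing in this factorization lies in $\mathbb{N}^n$, and the hypothesis ${\bf x}_t^{\bf a}\notin CM(t_0)$ guarantees that at least one non-initial factor $x_{i_0;t}$ genuinely occurs. The goal then reduces to the equivalent combinatorial statement: no ${\bf u}\in\mathrm{supp}(f)$ satisfies ${\bf u}\ge d^{t_0}({\bf x}_t^{\bf a})$ componentwise, since the presence of such a ${\bf u}$ is precisely what would produce a non-proper Laurent monomial ${\bf x}_{t_0}^{{\bf u}-d^{t_0}({\bf x}_t^{\bf a})}\in\mathbb{N}^n$ in the expansion.

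The main obstacle is exactly this final exponent-comparison step. Positivity of $d^{t_0}(x_{i;t})\in\mathbb{N}^n$ only records that the \emph{minimum} $x_{j;t_0}$-exponent in the expansion of $x_{i;t}$ is non-positive; it does not by itself guarantee that every individual term of $x_{i;t}$ has some negative exponent. To get past this, one has to upgrade the single-variable statement: combine the non-divisibility condition $x_{j;t_0}\nmid f_i$ with positivity of cluster variables to show that the Newton polytope of each $f_i$ sits entirely inside the staircase region $\{{\bf u}\in\mathbb N^n:{\bf u}\not\ge d^{t_0}(x_{i;t})\}$, so that each non-initial factor contributes only proper Laurent monomials. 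Once this single-factor assertion is in hand, a Minkowski-sum argument on $\mathrm{supp}(f)=\sum_i a_i\,\mathrm{supp}(f_i)$, together with the only-non-positive shift produced by the initial-cluster-variable factors, propagates the per-factor staircase constraint to the full product. A hypothetical ${\bf u}\ge d^{t_0}({\bf x}_t^{\bf a})$ in $\mathrm{supp}(f)$ would then split as $\sum a_i u^{(i)}$ and force a per-factor domination $u^{(i_0)}\ge d^{t_0}(x_{i_0;t})$, contradicting the staircase constraint for the non-initial factor $x_{i_0;t}$. This closes the argument and yields the proper Laurent monomial property, from which Theorem~\ref{lemproper} delivers the conclusion.
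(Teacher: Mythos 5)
Your reduction to the proper Laurent monomial property via Theorem~\ref{lemproper} matches the paper, and the no-cancellation consequences of positivity (additivity of $d$-vectors, $f=\prod_i f_i^{a_i}$, the reformulation ``no ${\bf u}\in\mathrm{supp}(f)$ dominates $d^{t_0}({\bf x}_t^{\bf a})$'') are all fine. But the core of your argument has two genuine gaps. First, the ``single-factor staircase assertion'' --- that every monomial of the numerator $f_i$ of a non-initial cluster variable $x_{i;t}$ fails to dominate $d^{t_0}(x_{i;t})$ componentwise --- is exactly the proper Laurent monomial property for a single cluster variable, i.e.\ the case ${\bf a}={\bf e}_i$ of what you are trying to prove. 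The ingredients you propose to derive it from do not suffice: $x_{j;t_0}\nmid f_i$ only says that for each $j$ \emph{some} monomial of $f_i$ has zero $x_{j;t_0}$-exponent, and positivity of $d^{t_0}(x_{i;t})$ only constrains the coordinatewise minima of $\mathrm{supp}(f_i)$; neither rules out a single monomial ${\bf u}\ge d^{t_0}(x_{i;t})$. Second, even granting that assertion, the Minkowski-sum localization fails as logic: if ${\bf u}=\sum_{i,m}u^{(i,m)}$ with $\sum_{i,m}{\bf d}_i\le{\bf u}$ componentwise, it does not follow that some summand satisfies $u^{(i_0,m)}\ge{\bf d}_{i_0}$ (e.g.\ $u^{(1)}=(2,0)$, $u^{(2)}=(0,2)$, ${\bf d}_1={\bf d}_2=(1,1)$). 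A componentwise inequality on a sum does not descend to any individual factor, so the intended contradiction never materializes.

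The paper sidesteps both problems by running the expansion in the opposite direction: it writes each \emph{initial} variable as $x_{j;t_0}={f_j(x_{1;t},\dots,x_{n;t})}/{{\bf x}_t^{d^t(x_{j;t_0})}}$ with $f_j\in\mathbb{NP}[x_{1;t},\dots,x_{n;t}]$, substitutes this into ${\bf x}_t^{\bf a}=\sum_{\bf v}c_{\bf v}{\bf x}_{t_0}^{\bf v}$, and clears denominators to get $g=\sum_{\bf v}c_{\bf v}{\bf x}_t^{-D_{t_0}^t{\bf v}-{\bf a}}g_{\bf v}$ with all data in $\mathbb{NP}[x_{1;t},\dots,x_{n;t}]$ and no $x_{j;t}$ dividing $g$ or $g_{\bf v}$. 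Non-negativity forbids cancellation, so each exponent vector $-D_{t_0}^t{\bf v}-{\bf a}$ must lie in $\mathbb{N}^n$; reading off the $k$-th coordinate (where $a_k>0$ and $x_{k;t}\notin{\bf x}_{t_0}$) and using positivity of the reverse $d$-vectors $d^t(x_{j;t_0})$ forces ${\bf v}\notin\mathbb{N}^n$ directly, with no per-factor localization needed. If you want to keep your forward-direction framework, you would first have to prove the single-variable staircase claim by some such independent argument, and then replace the Minkowski-sum step by one that works at the level of the whole monomial ${\bf v}$ rather than individual factors.
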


\begin{proof}
By Theorem \ref{lemproper}, it suffices to show the positivity of cluster variables and the positivity of $d$-vectors imply the proper Laurent monomial property.

Let ${\bf x}_t$, ${\bf x}_{t_0}$ be any two clusters of $\mathcal A(\mathcal S)$, and ${\bf x}_t^{\bf a}=\prod\limits_{i=1}^{n}x_{i;t}^{a_i}$ be a cluster monomial in $CM(t)\setminus CM(t_0)$, i.e.,  there exists $a_k>0$ such that $x_{k;t}$
is not a cluster variable in ${\bf x}_{t_0}$.

By the positivity of cluster variables, the expansion of ${\bf x}_t^{\bf a}$ with respect to ${\bf x}_{t_0}$ has the form of
\begin{eqnarray}
{\bf x}_t^{\bf a}=\sum\limits_{{\bf v}\in V} c_{\bf v}{\bf x}_{t_0}^{\bf v},\nonumber
 \end{eqnarray}
 where $V$ is a finite subset of $\mathbb Z^n$, and $0\neq c_{\bf v}\in\mathbb {NP}$ for  ${\bf v}\in V$.
  For the same reason, there exist polynomials $f_1,\cdots,f_n\in \mathbb {NP}[x_{1;t},\cdots,x_{n;t}]$ with $x_{j;t}\nmid f_i$ such that
 $$x_{i;t_0}=\frac{f_i(x_{1;t},\cdots,x_{n;t})}{{\bf x}_t^{d^t(x_{i;t_0})}}.$$
Denote by $F^{\bf v}=f_1^{v_1}\cdots f_n^{v_n}$.  Since $v_1,\cdots,v_n\in \mathbb Z$, $F^{\bf v}$ can be written in  the form of $F^{\bf v}=\frac{F_{1;{\bf v}}}{F_{2;{\bf v}}}$, where
$F_{1;{\bf v}}, F_{2;{\bf v}}\in \mathbb {NP}[x_{1;t},\cdots,x_{n;t}]$  with $x_{j;t}\nmid F_{1;{\bf v}}$ and $x_{j;t}\nmid F_{2;{\bf v}}$, $j=1,2,\cdots,n$.
 Thus, $${\bf x}_t^{\bf a}=\sum\limits_{{\bf v}\in V} c_{\bf v}{\bf x}_{t_0}^{\bf v}=
\sum\limits_{{\bf v}\in V} c_{\bf v}{\bf x}_{t}^{-D_{t_0}^{t}\bf v}F^{\bf v}=\sum\limits_{{\bf v}\in V} c_{\bf v}{\bf x}_{t}^{-D_{t_0}^{t}\bf v}\frac{F_{1;{\bf v}}}{F_{2;{\bf v}}}.$$
 From the above equality, we can obtain an equality with the following form:
 $$
{\bf x}_t^{\bf a}g(x_{1;t},\cdots,x_{n;t})=\sum\limits_{{\bf v}\in V} c_{\bf v}{\bf x}_{t}^{-D_{t_0}^{t}{\bf v}}g_{\bf v}(x_{1;t},\cdots,x_{n;t}),
$$
which can be written as
$$g(x_{1;t},\cdots,x_{n;t})=\sum\limits_{{\bf v}\in V} c_{\bf v}{\bf x}_{t}^{-D_{t_0}^{t}{\bf v}-{\bf a}}g_{\bf v}(x_{1;t},\cdots,x_{n;t}).$$

Because $c_{\bf v}\in\mathbb {NP}$, and $g,~g_{\bf v}\in \mathbb {NP}[x_{1;t},\cdots,x_{n;t}]$ with $x_{j;t}\nmid g$ and $x_{j;t}\nmid g_{\bf v}$ for $j=1,2,\cdots,n$, we must have $-D_{t_0}^{t}{\bf v}-{\bf a}\in\mathbb N^n$. So, the $k$-th component of  $D_{t_0}^{t}{\bf v}+{\bf a}$ satisfying $$(d_{k1}^{t_0}v_1+\cdots+d_{kn}^{t_0}v_n)+a_k\leq0,$$ where $d_{kj}^{t_0}$ is the $k$-th component of $d^t(x_{j;t_0})$, $j=1,2,\cdots,n$. By the positivity of $d$-vectors, we get that $d_{kj}^{t_0}\geq 0$, since $x_{k;t}\notin {\bf x}_{t_0}$. Then by $a_k>0$ and $(d_{k1}^{t_0}v_1+\cdots+d_{kn}^{t_0}v_n)+a_k\leq0$, we can obtain ${\bf v}\notin \mathbb N^n, {\bf v}\in V$, i.e.,
${\bf x}_t^{\bf a}$ is a sum of proper Laurent monomials in ${\bf x}_{t_0}$. So the  proper Laurent monomial property holds for $\mathcal A(\mathcal S)$ and thus the cluster monomials of $\mathcal A(\mathcal S)$ are linear independence.
\end{proof}

\begin{Remark}\label{rmknew}
(i).  Recently, the linear independence of cluster monomials for skew-symmetrizable cluster algebras has been proved in \cite{GHKK}.

(ii).  In Proposition \ref{thmproper}, we actually provide a new proof of the linear independence of cluster monomials  for skew-symmetrizable cluster algebras, since
\\
(1). the positivity of $d$-vectors for any skew-symmetrizable cluster algebras are affirmed  in this paper, see Theorem \ref{thmanswer};
\\
(2).  the positivity of cluster variables was proved for skew-symmetric cluster algebras in \cite{LS}, for skew-symmetrizable cluster algebras in \cite{GHKK}.
\end{Remark}

\vspace{5mm}

\section{Compatibility degree on the set of cluster variables}
The compatibility degree on the set of cluster variables was firstly defined in \cite{CP} for cluster algebras of finite type. It needs Conjecture \ref{conjecture} (ii), which was affirmed for cluster algebras of finite type in \cite{CP},  to guarantee the validity of the definition of compatibility degree. Thanks to Theorem \ref{thmanswer}, we can extend the definition of  compatibility degree to any skew-symmetrizable cluster algebras.  We then give an answer to Problem \ref{problem} (see Theorem \ref{thmcompatible}).

Note that for finite type the compatibility degree defined on the set of cluster variables are almost the same with the compatibility degree defined on the set of almost positive roots (refer to \cite{FZ0,FZ1}) for corresponding root system, by \cite[Theorem 3.1]{CP}.

\begin{Definition}\label{defdegree}
Let $\mathcal A(\mathcal S)$ be a skew-symmetrizable cluster algebra, and $\mathcal X$ be the set of cluster variables of  $\mathcal A(\mathcal S)$. We define a function $d:\mathcal X\times\mathcal X\rightarrow\mathbb Z_{\geq -1}$, which is called the {\bf compatibility degree} on the set $\mathcal X$ of cluster variables. For any two cluster variables $x_{i;t}$ and $x_{j;t_0}$, $d(x_{j;t_0},x_{i;t})$ is defined by the following steps:
\begin{itemize}
\item choose a cluster ${\bf x}_{t_0}$ containing the cluster variable $x_{j;t_0}$;
\item compute the $d$-vector of $x_{i;t}$ with respect to ${\bf x}_{t_0}$, say, $d^{t_0}(x_{i;t})=(d_1,\cdots,d_n)^{\top}$;
    \item $d(x_{j;t_0},x_{i;t}):=d_j$, which is called the {\bf compatibility degree of $x_{i;t}$ with respect to $x_{j;t_0}$}.
\end{itemize}
\end{Definition}
\begin{Remark}
(1). The compatibility degree $d(x_{j;t_0},x_{i;t})$ is well-defined, since $d(x_{j;t_0},x_{i;t})$ does not depend on the choice of ${\bf x}_{t_0}$, and is uniquely determined by $x_{i;t}$ and $x_{j;t_0}$ by Theorem \ref{thmanswer} (i).

(2). By  Theorem \ref{thmanswer} (ii), we have the following facts:

(a).  $d(x_{j;t_0},x_{i;t})\in \mathbb{Z}_{\geq -1}$.

(b). $d(x_{j;t_0},x_{i;t})=-1$ if and only if $d(x_{i;t},x_{j;t_0})=-1$.

(c).  $d(x_{j;t_0},x_{i;t})=0$ if and only if $d(x_{i;t},x_{j;t_0})=0$.

(3). By (b),(c) and Theorem \ref{thmanswer} (ii), we know that $d(x_{j;t_0},x_{i;t})\leq0$ if and only if $d(x_{i;t},x_{j;t_0})\leq0$.
\end{Remark}

 We say that $x_{i;t}$ and $x_{j;t_0}$ are {\bf compatible}, if $d(x_{j;t_0},x_{i;t})\leq0$.
Let  $x_1,\cdots,x_p$ be different cluster variables of a skew-symmetrizable cluster algebra $\mathcal A(\mathcal S)$.  The set  $\{x_1,\cdots,x_p\}$ is called a {\bf compatible set} of $\mathcal A(\mathcal S)$ if any two cluster variables in this set are compatible.

 \begin{Lemma}\label{lemcompatible}
 Let $\mathcal A(\mathcal S)$ be a skew-symmetrizable cluster algebra, $x_{k;t}$ be a cluster variable, and ${\bf x}_{t_0}=\{x_{1;t_0},\cdots,x_{n;t_0}\}$ be a cluster of  $\mathcal A(\mathcal S)$. If $x_{k;t}$ is compatible with $x_{i;t_0}$ for $i=p+1,p+2,\cdots,n$, then $\{x_{k;t}\}\cup \{x_{p+1;t_0},x_{p+2;t_0},\cdots,x_{n;t_0}\}$ is a subset of some cluster of $\mathcal A(\mathcal S)$.
 \end{Lemma}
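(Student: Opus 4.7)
\begin{Proof}[Proof plan]
The plan is to combine the enough $g$-pairs property with the compatibility hypothesis to manufacture a single cluster hosting all of the required variables. First, by Proposition~\ref{prodrec} the notion of $d$-vector, and hence of compatibility degree, does not depend on coefficients, and by Proposition~\ref{proclustervariable} whether a set of cluster variables lies in a common cluster also does not depend on coefficients. Thus I reduce to the case where $\mathcal{A}(\mathcal{S})$ has principal coefficients at $t_0$.

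Now choose any cluster ${\bf x}_t$ containing $x_{k;t}$ and set $I=\{1,\dots,p\}$. By the existence theorem (Theorem~\ref{thmenough}) there is a cluster ${\bf x}_{t^\prime}$ such that $({\bf x}_t,{\bf x}_{t^\prime})$ is a $g$-pair along $I$. Because ${\bf x}_{t^\prime}$ is connected to ${\bf x}_{t_0}$ by an $I$-sequence, one has $x_{i;t^\prime}=x_{i;t_0}$ for $i=p+1,\dots,n$, and in particular $G_{t^\prime}$ has the block shape $\left(\begin{smallmatrix}G(t^\prime)&0\\ \ast&I_{n-p}\end{smallmatrix}\right)$ with $\det G(t^\prime)=\pm1$. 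So ${\bf x}_{t^\prime}$ already contains $x_{p+1;t_0},\dots,x_{n;t_0}$, and the whole game is to prove $x_{k;t}\in {\bf x}_{t^\prime}$.

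To do this, let ${\bf r}\in\mathbb{Z}^n$ be the unique vector with $g(x_{k;t})=G_{t^\prime}{\bf r}$ provided by Theorem~\ref{detgthm}, so that the Laurent expansion of $x_{k;t}$ in ${\bf x}_{t^\prime}$ has leading monomial ${\bf x}_{t^\prime}^{\bf r}$. I will show that ${\bf r}\in\mathbb{N}^n$ by checking the two blocks of components separately. For the first $p$ components, the $g$-pair condition gives a cluster monomial ${\bf x}_{t^\prime}^{{\bf v}^\prime}$ with $v_i^\prime=0$ for $i>p$ and $\pi_I(g(x_{k;t}))=\pi_I(G_{t^\prime}{\bf v}^\prime)$; the block form of $G_{t^\prime}$ reduces this to $G(t^\prime)(r_1,\dots,r_p)^\top=G(t^\prime)(v_1^\prime,\dots,v_p^\prime)^\top$, and invertibility of $G(t^\prime)$ forces $r_i=v_i^\prime\ge 0$ for $i\le p$. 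For the last $n-p$ components I use compatibility: $d(x_{i;t_0},x_{k;t})\le 0$ for $i>p$, i.e.\ the $i$-th component of $d^{t_0}(x_{k;t})$ is nonpositive; since ${\bf x}_{t^\prime}$ is reached from ${\bf x}_{t_0}$ by mutations in directions from $I$, Proposition~\ref{prodvectors} gives $d^{t^\prime}_i(x_{k;t})=d^{t_0}_i(x_{k;t})\le 0$ for $i>p$. But $-d^{t^\prime}_i(x_{k;t})$ is the minimal exponent of $x_{i;t^\prime}$ appearing in the expansion of $x_{k;t}$, so $r_i\ge -d^{t^\prime}_i(x_{k;t})\ge 0$ for $i>p$.

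Thus ${\bf r}\in\mathbb{N}^n$, so ${\bf x}_{t^\prime}^{\bf r}$ is a genuine cluster monomial in ${\bf x}_{t^\prime}$ with the same $g$-vector as the cluster monomial $x_{k;t}$ in ${\bf x}_t$. Theorem~\ref{thmmonomial} then yields $x_{k;t}={\bf x}_{t^\prime}^{\bf r}$, and Lemma~\ref{lemvariable} upgrades this to $x_{k;t}\in{\bf x}_{t^\prime}$. Consequently ${\bf x}_{t^\prime}$ is the desired cluster containing $\{x_{k;t}\}\cup\{x_{p+1;t_0},\dots,x_{n;t_0}\}$. The main obstacle I expect is the last-block positivity step: recognizing that the compatibility hypothesis, translated through Proposition~\ref{prodvectors}, exactly controls the minimal $x_{i;t^\prime}$-exponents in the Laurent expansion and thereby forces $r_i\ge 0$ for $i>p$; the first-block step is an essentially formal consequence of the definition of a $g$-pair along $I$.
\end{Proof}
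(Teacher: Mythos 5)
Your proof is correct, but the decisive step is carried out differently from the paper. Both proofs reduce to principal coefficients, take a $g$-pair $({\bf x}_t,{\bf x}_{t'})$ along $I=\{1,\dots,p\}$, and aim to show $x_{k;t}\in{\bf x}_{t'}$. The paper argues on the level of $g$-vectors: it converts compatibility (via Theorem \ref{thmanswer}(ii)) into the existence of an auxiliary cluster ${\bf x}_{t_i}$ containing both $x_{k;t}$ and $x_{i;t_0}$, uses row sign-coherence of $G_{t_i}$ to get the last $n-p$ entries of $g(x_{k;t})$ nonnegative, and then pads $x_{k;t}$ and ${\bf x}_{t'}^{\bf v}$ by monomials in the variables indexed by $i>p$ so that the two cluster monomials acquire equal $g$-vectors, finishing with Theorem \ref{thmmonomial} and Proposition \ref{prosupport}. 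You instead work with the exponent vector ${\bf r}=G_{t'}^{-1}g(x_{k;t})$ of the leading term from Theorem \ref{detgthm}, proving ${\bf r}\in\mathbb{N}^n$ blockwise: the first block exactly as in the paper's Lemma \ref{lemdv}(i), and the second block by translating compatibility into $d_i^{t_0}(x_{k;t})\le 0$ and transporting this along the $I$-sequence via Proposition \ref{prodvectors}; then Theorem \ref{thmmonomial} and Lemma \ref{lemvariable} give $x_{k;t}\in{\bf x}_{t'}$. Your route is essentially an extension of Lemma \ref{lemdv} to a general index set and has a concrete advantage: the paper's displayed identity $g\bigl(x_{k;t}\prod_{i>p}x_{i;t}^{g_i'}\bigr)=(g_1,\dots,g_p,g_{p+1}+g_{p+1}',\dots,g_n+g_n')^{\top}$ implicitly requires $g(x_{i;t})={\bf e}_i$ for $i>p$, i.e.\ $x_{i;t}=x_{i;t_0}$, which is not part of the lemma's hypotheses (unlike in Theorem \ref{corconnected}, where that assumption is legitimately made); your argument never needs it. What the paper's version buys in exchange is that it stays entirely inside the $g$-vector/sign-coherence formalism and does not invoke the $d$-vector recursion of Proposition \ref{prodvectors}.
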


 \begin{proof}
 Thanks to Proposition \ref{proclustervariable}, we can assume that $\mathcal A(\mathcal S)$ is a  skew-symmetrizable cluster algebra with principal coefficients at $t_0$. By Theorem \ref{thmenough}, $\mathcal A(\mathcal S)$ has the enough $g$-pair property, thus for $I=\{1,2,\cdots,p\}$ and the cluster ${\bf x}_t$, there exists a cluster ${\bf x}_{t^\prime}$ such that $({\bf x}_t,{\bf x}_{t^\prime})$ is a $g$-pair along $I$. Thus for
 the cluster variable $x_{k;t}$ (as a cluster monomial in ${\bf x}_t$), there exists a  cluster monomial ${\bf x}_{t^\prime}^{\bf v}$ in ${\bf x}_{t^\prime}$ with $v_i=0$ for $i\notin I$ such that $\pi_I(g(x_{k;t}))=\pi_I(g({\bf x}_{t^\prime}^{\bf v}))$.
 We can assume that $g(x_{k;t})=(g_1,\cdots,g_p,g_{p+1},\cdots,g_n)^{\top}$ and $g({\bf x}_{t^\prime}^{\bf v})=(g_1,\cdots,g_p,g_{p+1}^\prime,\cdots,g_n^\prime)^{\top}$.
  Because  $({\bf x}_t,{\bf x}_{t^\prime})$ is a $g$-pair along $I$, we know that ${\bf x}_{t^\prime}$ is a cluster connected with ${\bf x}_{t_0}$ by an $I$-sequence, and the $G$-matrix of ${\bf x}_{t^\prime}$ has the form of $G_{t^{\prime}}=\begin{pmatrix}G(t^{\prime})&0\\ \ast& I_{n-p}\end{pmatrix}$. By row sign-coherence of $G$-matrices, we know that $g_{p+1}^\prime,\cdots,g_n^\prime\geq 0$.

 For each $i=p+1,p+2,\cdots,n$, since $x_{k;t}$ is compatible with $x_{i;t_0}$, there exists a cluster ${\bf x}_{t_i}$ containing both $x_{k;t}$ and $x_{i;t_0}$. By  the row sign-coherence of $G$-matrices, we know that the $i$-th row of $G_{t_i}$ is nonnegative, $i=p+1,\cdots,n$. Thus we can obtain that $g_{p+1},\cdots,g_n\geq 0$.

 Then  $x_{k;t}\prod\limits_{i=p+1}^nx_{i;t}^{g_i^{\prime}}$ and ${\bf x}_{t^{\prime}}^{{\bf v}}\prod\limits_{i=p+1}^nx_{i;t^{\prime}}^{g_i}$ are cluster monomials in ${\bf x}_t$ and in ${\bf x}_{t^{\prime}}$ respectively and
\begin{eqnarray}
g(x_{k;t}\prod\limits_{i=p+1}^nx_{i;t}^{g_i^{\prime}})=(g_1,\cdots,g_p,g_{p+1}+g_{p+1}^{\prime},
\cdots,g_{n}+g_{n}^{\prime})^{\top}=g({\bf x}_{t^{\prime}}^{{\bf v}}\prod\limits_{i=p+1}^nx_{i;t^{\prime}}^{g_i}).\nonumber
\end{eqnarray}
This means that the cluster monomials $x_{k;t}\prod\limits_{i=p+1}^nx_{i;t}^{g_i^{\prime}}$ and ${\bf x}_{t^{\prime}}^{{\bf v}}\prod\limits_{i=p+1}^nx_{i;t^{\prime}}^{g_i}$ have the same $g$-vector. By Theorem \ref{thmmonomial}, we get that
$$x_{k;t}\prod\limits_{i=p+1}^nx_{i;t}^{g_i^{\prime}}={\bf x}_{t^{\prime}}^{{\bf v}}\prod\limits_{i=p+1}^nx_{i;t^{\prime}}^{g_i}.$$

 By Proposition \ref{prosupport}, we get that $x_{k;t}$ is a cluster variable in ${\bf x}_{t^\prime}$. Since ${\bf x}_{t^\prime}$ is connected with ${\bf x}_{t_0}$ by an $I=\{1,\cdots,p\}$-sequence, we know that $x_{j;t^\prime}=x_{j;t_0}$ for $j=p+1,\cdots,n$. Thus ${\bf x}_{t^\prime}$ is a cluster of $\mathcal A(\mathcal S)$ containing $\{x_{k;t}\}\cup\{x_{p+1;t_0},x_{p+2;t_0},\cdots,x_{n;t_0}\}$.
 \end{proof}

The following theorem is an answer to {\em Problem} \ref{problem}.

\begin{Theorem}\label{thmcompatible}
Let $\mathcal A(\mathcal S)$ be a skew-symmetrizable cluster algebra of rank $n$. Then

(i).  $\{x_1,\cdots,x_p\}$ is  a compatible set of $\mathcal A(\mathcal S)$ if and only if  $\{x_1,\cdots,x_p\}$ is a subset of some cluster ${\bf x}_t$ of $\mathcal A(\mathcal S)$.  In this case, it always holds that $p\leq n$.

(ii). $\{x_1,\cdots,x_p\}$ is a  maximal compatible set of $\mathcal A(\mathcal S)$  if and only if it is a cluster of $\mathcal A(\mathcal S)$. In this case, $p=n$.
\end{Theorem}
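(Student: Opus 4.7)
The plan is to deduce (i) by induction on $p$ using Lemma \ref{lemcompatible} as the engine, and then derive (ii) as a quick corollary. For the ``if'' direction of (i), assume $\{x_1,\dots,x_p\}\subseteq {\bf x}_t$ for some cluster ${\bf x}_t$. Then for any two elements $x_i,x_j$ in this set, they are both cluster variables inside the same cluster ${\bf x}_t$, so by Theorem \ref{thmanswer} (ii) one has $d(x_i,x_j)\in\{-1,0\}$, confirming that the set is compatible. Moreover, cluster variables in ${\bf x}_t$ are algebraically independent (being a transcendence basis), so distinct cluster variables inside ${\bf x}_t$ total at most $n$, giving $p\leq n$.

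For the ``only if'' direction of (i), I would argue by induction on $p$. The case $p=1$ is trivial since every cluster variable sits in some cluster. For the inductive step, assume $\{x_1,\dots,x_{p-1}\}$ is compatible and hence, by inductive hypothesis, lies in a common cluster ${\bf x}_{t_0}$. After relabeling the entries of ${\bf x}_{t_0}$ by a permutation (which is harmless since the cluster is an unordered set), we may assume that $x_j=x_{n-p+1+j;\,t_0}$ for $j=1,\dots,p-1$, so that $\{x_1,\dots,x_{p-1}\}=\{x_{n-p+2;\,t_0},\dots,x_{n;\,t_0}\}$. Now $x_p$ is compatible with each $x_{i;\,t_0}$ for $i=n-p+2,\dots,n$, so applying Lemma \ref{lemcompatible} with the parameter ``$p$'' there taken to be $n-p+1$ produces a cluster containing $\{x_p\}\cup\{x_{n-p+2;\,t_0},\dots,x_{n;\,t_0}\}=\{x_1,\dots,x_p\}$, completing the induction. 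The main conceptual obstacle is precisely this bookkeeping: to apply Lemma \ref{lemcompatible} one needs the already-common cluster variables to sit as the \emph{tail} of some initial cluster, which is achieved by the relabeling step above combined with the freedom to choose the initial cluster.

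Part (ii) then follows quickly. If $\{x_1,\dots,x_p\}$ is a maximal compatible set, part (i) places it inside some cluster ${\bf x}_t$; but every cluster variable of ${\bf x}_t$ is compatible with every element of $\{x_1,\dots,x_p\}$, so by maximality $\{x_1,\dots,x_p\}\supseteq {\bf x}_t$, hence $\{x_1,\dots,x_p\}={\bf x}_t$ and $p=n$. Conversely, if $\{x_1,\dots,x_p\}$ equals a cluster ${\bf x}_t$, then $p=n$ and the set is compatible by (i); and if some cluster variable $x$ were compatible with every $x_i$, then $\{x_1,\dots,x_n,x\}$ would be compatible, hence by (i) contained in a single cluster, forcing $x\in\{x_1,\dots,x_n\}$ by the cardinality bound. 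Thus ${\bf x}_t$ is maximal, finishing the proof.
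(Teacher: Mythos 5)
Your proof is correct and is essentially the paper's own argument: the paper disposes of the "if" direction as clear, says the "only if" direction "can be deduced from Lemma \ref{lemcompatible} step by step," and derives (ii) from (i), which is exactly the induction on $p$ (with the relabeling bookkeeping) and the maximality argument you have written out in full.
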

\begin{proof}
(i). The side for ``if'' part is clear and the side for ``only if'' part can be deuced from Lemma \ref{lemcompatible} step by step.

(ii). It follows from (i).
\end{proof}

The following corollary gives an positive answer to \cite[Conjecture 5.5]{FST} by Fomin,  Shapiro, and  Thurston.
\begin{Corollary}
Let $\mathcal A(\mathcal S)$ be a skew-symmetrizable cluster algebra. For any collection of cluster variables of $\mathcal A(\mathcal S)$, if each pair of them is contained in some cluster of $\mathcal A(\mathcal S)$, then there exists a cluster containing all of them.
\end{Corollary}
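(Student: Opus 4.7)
The plan is to deduce this corollary directly from Theorem \ref{thmcompatible}(i) via the compatibility degree. Given a collection $\{x_1, \dots, x_p\}$ of cluster variables such that every pair $\{x_i, x_j\}$ lies in some common cluster of $\mathcal{A}(\mathcal S)$, I will first translate the pairwise assumption into the statement that $\{x_1, \dots, x_p\}$ is a compatible set in the sense of Definition \ref{defdegree}, and then invoke Theorem \ref{thmcompatible}(i) to conclude that the whole collection sits inside a single cluster.

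The first step is the translation: fix $i \ne j$ and choose a cluster ${\bf x}_t$ containing both $x_i$ and $x_j$. Say $x_i = x_{k;t}$ for some $k$. Then by the definition of the $d$-vector, $d^t(x_i) = d^t(x_{k;t}) = -{\bf e}_k$, so the component of $d^t(x_i)$ corresponding to $x_j$ is either $-1$ (if $x_j = x_i$) or $0$ (if $x_j \ne x_i$, since it is one of the other entries of the initial cluster ${\bf x}_t$). By Theorem \ref{thmanswer}(i), the compatibility degree $d(x_j, x_i)$ does not depend on the choice of cluster containing $x_j$, so $d(x_j, x_i) \le 0$. Hence $x_i$ and $x_j$ are compatible in the sense preceding Lemma \ref{lemcompatible}.

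Since this holds for every pair, $\{x_1, \dots, x_p\}$ is a compatible set of $\mathcal{A}(\mathcal S)$. Applying Theorem \ref{thmcompatible}(i) then yields a cluster ${\bf x}_t$ of $\mathcal{A}(\mathcal S)$ with $\{x_1, \dots, x_p\} \subseteq {\bf x}_t$, and incidentally $p \le n$. This completes the argument.

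There is no serious obstacle here; the entire content of the corollary is absorbed into Theorem \ref{thmcompatible}(i), and the only real step is verifying that ``lying in a common cluster'' implies compatibility degree $\le 0$, which is immediate from the definition of the $d$-vector of an initial cluster variable together with the well-definedness asserted in Theorem \ref{thmanswer}(i).
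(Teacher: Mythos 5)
Your proposal is correct and follows the same route as the paper: translate the pairwise common-cluster hypothesis into compatibility (degree $\le 0$) and then apply Theorem \ref{thmcompatible}(i). The only difference is that you spell out the verification that lying in a common cluster forces $d(x_j,x_i)\le 0$, which the paper leaves implicit via Theorem \ref{thmanswer}(ii).
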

\begin{proof}
Since  each pair of these cluster variable is contained in some cluster, we know these cluster variable forms a compatible set of  $\mathcal A(\mathcal S)$. Then the result follows from Theorem \ref{thmcompatible}.
\end{proof}

Based on Theorem \ref{thmcompatible}, we can also give a new proof of the known fact  from \cite{GSV,CL} in cluster algebras as follows.
\begin{Corollary}
Let $\mathcal A(\mathcal S)$ be a skew-symmetrizable cluster algebra of rank $n$, $({\bf x}_{t_1},{\bf y}_{t_1}, B_{t_1})$ and  $({\bf x}_{t_2},{\bf y}_{t_2}, B_{t_2})$ be two seeds of  $\mathcal A(\mathcal S)$
with $x_{i;t_1}=x_{i;t_2}$ for $i=1,2,\cdots,n-1$. Denote by $x_{n;t_1}^\prime$  the new cluster variable in the seed $\mu_n({\bf x}_{t_1},{\bf y}_{t_1}, B_{t_1})$, then $x_{n;t_2}=x_{n;t_1}$ or $x_{n;t_2}=x_{n;t_1}^\prime$.
\end{Corollary}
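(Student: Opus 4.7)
The plan is to combine Theorem \ref{corconnected} with a careful analysis of which mutations can possibly preserve the common cluster variables $x_{1;t_1},\ldots,x_{n-1;t_1}$. I would let $\Gamma'$ denote the subgraph of the exchange graph $\Gamma(\mathcal S)$ induced on those equivalence classes of seeds whose clusters contain $\{x_{1;t_1},\ldots,x_{n-1;t_1}\}$. By Theorem \ref{corconnected}, $\Gamma'$ is connected; it contains $[t_1]$, $[t_2]$ (by the hypothesis $x_{i;t_2}=x_{i;t_1}$ for $i<n$), and $[\mu_n(t_1)]$ (since $\mu_n$ fixes the cluster variables in positions $1,\ldots,n-1$).

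The crux is to show that $\Gamma'$ has exactly two vertices, $[t_1]$ and $[\mu_n(t_1)]$, joined by a single $\mu_n$-edge. First, I would check that inside $\Gamma'$ only $\mu_n$-edges can appear: at any seed $t$ with $x_{i;t}=x_{i;t_1}$ for $i<n$, applying $\mu_k$ with $k<n$ replaces $x_{k;t}$ by its exchange partner $x_{k;t}'$; since cluster variables in one cluster are algebraically independent and hence pairwise distinct, $x_{k;t}'$ differs from every $x_{j;t}$, so $x_{k;t_1}=x_{k;t}$ disappears from the new cluster. This forces every vertex of $\Gamma'$ to have degree at most one. Next, a short seed-equivalence argument rules out $[\mu_n(t_1)]=[t_1]$: any permutation $\sigma$ realizing such an equivalence would satisfy $x_{\sigma(i);t_1}=x_{i;t_1}$ for $i<n$, and distinctness of the $x_{i;t_1}$ would force $\sigma$ to fix $\{1,\ldots,n-1\}$ and therefore also $n$, yielding $x_{n;t_1}'=x_{n;t_1}$, which contradicts the exchange relation. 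A connected graph with every vertex of degree at most one and at least two distinct vertices is a single edge, so $\Gamma'=\{[t_1],[\mu_n(t_1)]\}$.

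Finally, since $[t_2]$ must coincide with $[t_1]$ or $[\mu_n(t_1)]$, any permutation $\sigma$ realizing the equivalence satisfies $x_{\sigma(i);t_1}=x_{i;t_2}=x_{i;t_1}$ for $i<n$, and distinctness forces $\sigma$ to fix $\{1,\ldots,n-1\}$ and hence $n$. This immediately gives $x_{n;t_2}=x_{n;t_1}$ in the first case and $x_{n;t_2}=x_{n;t_1}'$ in the second. The main obstacle, and the only place where a deep result enters, is the degree-at-most-one analysis combined with connectedness to conclude that $\Gamma'$ has exactly these two vertices; here Theorem \ref{corconnected} does the essential work, while the remaining steps are bookkeeping with the definition of seed equivalence and the distinctness of cluster variables in a single cluster.
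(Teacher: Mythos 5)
Your argument is correct, but it takes a genuinely different route from the paper's. The paper derives the statement from Theorem \ref{thmcompatible} (the compatibility-degree criterion): it reads off from the exchange relation that $x_{n;t_1}x_{n;t_1}^\prime$ is a polynomial in the common variables $x_{1;t_2},\cdots,x_{n-1;t_2}$, hence $d(x_{n;t_2},x_{n;t_1})+d(x_{n;t_2},x_{n;t_1}^\prime)=0$, so $x_{n;t_2}$ is compatible with at least one of $x_{n;t_1}$, $x_{n;t_1}^\prime$; if it equalled neither, one would obtain a compatible set of $n+1$ cluster variables, contradicting the bound in Theorem \ref{thmcompatible}. You instead invoke Theorem \ref{corconnected}: the subgraph $\Gamma'$ of seeds containing the $n-1$ common variables is connected, every vertex of $\Gamma'$ has degree at most one because mutation in any direction occupied by one of the specified variables expels that variable from the cluster (by distinctness of cluster variables within a cluster and $x_{k;t}^\prime\neq x_{k;t}$), and a short seed-equivalence computation shows $[t_1]\neq[\mu_n(t_1)]$; hence $\Gamma'$ is the single edge $[t_1]$--$[\mu_n(t_1)]$ and $[t_2]$ is one of its endpoints. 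Both deep inputs (Theorem \ref{corconnected} and Theorem \ref{thmcompatible}) rest on the enough $g$-pairs property of Theorem \ref{thmenough}, so neither route is logically cheaper; the paper's choice showcases the compatibility degree introduced in Section 7, while your version is purely graph-theoretic once connectedness is granted and, as a bonus, identifies the full subgraph $\Gamma'$ rather than only locating $x_{n;t_2}$. One small point to make explicit in your write-up: the seeds of $\Gamma'$ need not carry the common variables in positions $1,\ldots,n-1$, so the degree-at-most-one count should be phrased position-independently (exactly one slot of any cluster in $\Gamma'$ holds a variable outside the specified set, and only mutation at that slot can stay in $\Gamma'$); this is cosmetic and does not affect correctness.
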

\begin{proof}
By the definition of mutation, we have that

$$x_{n;t_1}x_{n;t_1}^\prime=\frac{y_{n;t_1}}{1\oplus y_{n;t_1}}\prod\limits_{b_{in}^{t_1}>0}x_{i;t_1}^{b_{in}^{t_1}}+\frac{y_{n;t_1}}{1\oplus y_{n;t_1}}\prod\limits_{b_{in}^{t_1}<0}x_{i;t_1}^{-b_{in}^{t_1}}.$$
  Since $x_{i;t_1}=x_{i;t_2}$ for $i=1,2,\cdots,n-1$, we know that $\frac{y_{n;t_1}}{1\oplus y_{n;t_1}}\prod\limits_{b_{in}^{t_1}>0}x_{i;t_1}^{b_{in}^{t_1}}+\frac{y_{n;t_1}}{1\oplus y_{n;t_1}}\prod\limits_{b_{in}^{t_1}<0}x_{i;t_1}^{-b_{in}^{t_1}}$ as a polynomial in variables in ${\bf x}_{t_2}$, its $d$-vector with respect to ${\bf x}_{t_2}$ is zero, i.e., $$d^{t_2}(x_{n;t_1}x_{n;t_1}^\prime)=d^{t_2}(\frac{y_{n;t_1}}{1\oplus y_{n;t_1}}\prod\limits_{b_{in}^{t_1}>0}x_{i;t_1}^{b_{in}^{t_1}}+\frac{y_{n;t_1}}{1\oplus y_{n;t_1}}\prod\limits_{b_{in}^{t_1}<0}x_{i;t_1}^{-b_{in}^{t_1}})=0.$$
In particular, the $n$-th component of $d^{t_2}(x_{n;t_1}x_{n;t_1}^\prime)$ is zero, i.e., $d(x_{n;t_2},x_{n;t_1})+d(x_{n;t_2},x_{n;t_1}^\prime)=0.$
So, $d(x_{n;t_2},x_{n;t_1})\leq 0$ or $d(x_{n;t_2},x_{n;t_1}^\prime)\leq 0$, i.e., either that $x_{n;t_2}$ and $x_{n;t_1}$ are compatible or that   $x_{n;t_2}$ and $x_{n;t_1}^\prime$ are compatible. Assume that $x_{n;t_2}\neq x_{n;t_1}$ and $x_{n;t_2}\neq x_{n;t_1}^\prime$. We know that either $$\{x_{1;t_1},\cdots,x_{n-1;t_1},x_{n;t_2},x_{n;t_1}\}=\{x_{1;t_2},\cdots,x_{n-1;t_2},x_{n;t_2},x_{n;t_1}\}$$ forms a compatible set with $n+1$ elements, or $$\{x_{1;t_1},\cdots,x_{n-1;t_1},x_{n;t_2},x_{n;t_1}^\prime\}=\{x_{1;t_2},\cdots,x_{n-1;t_2},x_{n;t_2},x_{n;t_1}^\prime\}$$
forms a compatible set with $n+1$ elements. This  contradicts to Theorem \ref{thmcompatible},  so we must have $x_{n;t_2}=x_{n;t_1}$ or $x_{n;t_2}=x_{n;t_1}^\prime$.
\end{proof}

\begin{Example}
Let $\mathcal A(\mathcal S)$ be the coefficients-free cluster algebra given by the initial exchange matrix $B_{t_0}=\begin{pmatrix}0&1\\-1&0\end{pmatrix}$. $\mathcal A(\mathcal S)$ has five clusters:
$$\{x_1,x_2\}\xrightarrow{\mu_1}\{x_3,x_2\}\xrightarrow{\mu_2}\{x_3,x_4\}\xrightarrow{\mu_1}\{x_5,x_4\}
\xrightarrow{\mu_2}\{x_5,x_1\}\xrightarrow{\mu_1}\{x_2,x_1\},$$
where $x_3=\frac{x_2+1}{x_1},x_4=\frac{x_1+x_2+1}{x_1x_2},x_5=\frac{x_1+1}{x_2}$ and the set of cluster variables of $\mathcal A(\mathcal S)$ is $\mathcal X=\{x_1,x_2,x_3,x_4,x_5\}$. By simple computation, it is easy to see that  compatibility degree on the set $\mathcal X$ of cluster variables is given by

$$\begin{pmatrix}d(x_1,x_1)&d(x_1,x_2)&d(x_1,x_3) &d(x_1,x_4)&d(x_1,x_5)\\d(x_2,x_1)&d(x_2,x_2)&d(x_2,x_3) &d(x_2,x_4)&d(x_2,x_5)\\d(x_3,x_1)&d(x_3,x_2)&d(x_3,x_3) &d(x_3,x_4)&d(x_3,x_5)\\d(x_4,x_1)&d(x_4,x_2)&d(x_4,x_3) &d(x_4,x_4)&d(x_4,x_5)\\d(x_5,x_1)&d(x_5,x_2)&d(x_5,x_3) &d(x_5,x_4)&d(x_5,x_5) \end{pmatrix}=\begin{pmatrix}-1&0&1&1&0\\0&-1&0&1&1\\1&0&-1&0&1\\1&1&0&-1&0\\0&1&1&0&-1
\end{pmatrix}.$$
From the above matrix, we know that the maximal compatible sets of $\mathcal A(\mathcal S)$ are the following five sets:
$$\{x_1,x_2\},\{ x_3,x_2\},\{x_3,x_4 \},\{x_5,x_4\},\{x_5,x_1\},$$
which are exactly the clusters of  $\mathcal A(\mathcal S)$.
\end{Example}
\vspace{5mm}

{\bf Acknowledgements:}\; {\em This project is supported by the National Natural Science Foundation of China (No.11671350 and No.11571173).}
\vspace{7mm}


\end{document}